\newtheorem{thm}{Theorem}
\newtheorem{lemma}[thm]{Lemma}
\newtheorem{prop}[thm]{Proposition}
\theoremstyle{definition}
\newtheorem{defn}[thm]{Definition}
\newtheorem{example}[thm]{Example}
\newtheorem{remark}[thm]{Remark}
\newtheorem{non-example}[thm]{Non-Example}
\DeclareMathOperator{\Res}{Res}
\DeclareMathOperator{\Sym}{Sym}
\def\mf{\mathfrak}
\def\phi{\varphi}
\def\tilde{\widetilde}
\def\C{\mathbb{C}}
\def\arrvline{\hfil\kern\arraycolsep\vline\kern-\arraycolsep\hfilneg}
\tikzset{
    /Dynkin diagram,
        indefinite edge/.style={
            ultra thick,
            densely dashed
        },
        o/.style={
            ultra thick,
            fill=black!30!white,
            draw=black
        },
        edge length=2cm, 
        edge/.style={ultra thick}, 
        arrow style={black,
                length=3mm,
                width=5mm,
                line width=2pt
                },
        root radius=.2cm,
        mark=o}
\title{An explicit classification of dual pairs in exceptional Lie algebras}
\author{Marisa Gaetz}
\email{mgaetz@mit.edu}
\thanks{The author was supported by the NSF Graduate Research Fellowship Program under Grant No.~2141064, and by the Fannie \& John Hertz Foundation.}
\begin{document}
\renewcommand{\arraystretch}{1.5}

\maketitle

\begin{abstract}
The primary goal of this paper is to explicitly write down all semisimple \textit{dual pairs} in the exceptional Lie algebras. (A \textit{dual pair} in a reductive Lie algebra $\mf{g}$ is a pair of subalgebras such that each member equals the other's centralizer in $\mf{g}$.) In a 1994 paper, H.~Rubenthaler outlined a process for generating a complete list of candidate dual pairs in each of the exceptional Lie algebras. However, the process of checking whether each of these candidate dual pairs is in fact a dual pair is not easy, and requires several distinct insights and methods. In this paper, we carry out this process and explain the relevant concepts as we go. We also give plenty of examples with the hopes of making Rubenthaler's 1994 result not only more complete but more usable and understandable. 
\end{abstract}

\section{Introduction}

The notion of a \textit{reductive dual pair}\footnote{Italicized terms will be defined later in the paper.} in a complex reductive Lie algebra was first introduced by Howe in his seminal 1989 paper \cite{HoweRemarks}, in which he also classified the reductive dual pairs in the symplectic Lie algebra $\mf{sp}_{2m}$. Since then, reductive dual pairs have been widely used and studied. For example, certain reductive dual pairs are implicitly used in some of the well-known constructions of the exceptional Lie algebras: $(G_2^1, A_2^{2''})$ and $(G_2^1, F_4^1)$ correspond to the Freudenthal-Tits constructions of $E_6$ and $E_8$, respectively \cite{tits-construction}, and $(D_4, D_4)$ corresponds to the triality construction of $E_8$ \cite{allison, barton-sudbery}. Additionally, reductive dual pairs have some beautiful connections to the study of commuting nilpotent elements in complex semisimple Lie algebras as introduced by Ginzburg in \cite{Ginzburg}; see \cite{Panyushev} for an explanation of these connections. More recently, the real forms of certain reductive dual pairs in exceptional Lie algebras have also drawn some attention \cite{KovT, Kov}.   

Some of the earliest work on reductive dual pairs appears in Rubenthaler's 1994 paper \cite{rubenthaler}. In this paper, Rubenthaler outlines a classification of reductive dual pairs in complex reductive Lie algebras. While Rubenthaler's paper provides a helpful framework, writing down all of these pairs explicitly requires a lot of additional nontrivial work. The primary goal of this paper is to explicitly write down all of the semisimple dual pairs in the exceptional Lie algebras. Here is how we will proceed: 
\begin{itemize}
\item \textbf{Section \ref{sec:prelims}}: We reduce the problem of classifying reductive dual pairs in reductive Lie algebras to the problem of classifying \textit{semisimple} dual pairs in \textit{simple} Lie algebras.
\item \textbf{Section \ref{sec:max-reg}}: We classify dual pairs coming from \textit{maximal regular} subalgebras of \textit{maximal rank}.
\item \textbf{Section \ref{sec:admissible}}: We define and classify \textit{admissible subalgebras}.
\item \textbf{Section \ref{sec:S-irred}}: We introduce \textit{$S$-irreducible} dual pairs and \textit{admissible dual pairs}, and describe how they nearly all arise from admissible subalgebras.
\item \textbf{Section \ref{sec:lists-classical}}: We classify the semisimple dual pairs in the complex classical simple Lie algebras that are $S$-irreducible, admissible, or that arise from maximal regular subalgebras of maximal rank. We write out all such dual pairs for rank up to 6.
\item \textbf{Section \ref{sec:non-S-irred}}: We explain how to obtain lists of candidate non-$S$-irreducible dual pairs (including all \textit{actual} non-$S$-irreducible dual pairs). 
\item \textbf{Section \ref{sec:elim-and-confirm}}: We introduce methods for determining whether or not a candidate dual pair is actually a dual pair.
\item \textbf{Section \ref{sec:lists}}: We classify all ($S$-irreducible and non-$S$-irreducible) semisimple dual pairs in the exceptional Lie algebras.
\end{itemize}

\section{Preliminaries} \label{sec:prelims}

In this section, we introduce several important notions and conventions, and we explain why we are focusing on \textit{semisimple} dual pairs in \textit{simple} Lie algebras. 

\begin{defn}[{\cite[p.~550]{HoweRemarks}}]
Let $\mf{g}$ be a reductive Lie algebra. Then a pair $(\mf{a}, \mf{b})$ of subalgebras of $\mf{g}$ is a \textbf{reductive dual pair} if
\begin{enumerate}
\item $\mf{a}$ and $\mf{b}$ are reductive in $\mf{g}$, and
\item $\mf{a} = \mf{z}_{\mf{g}}(\mf{b})$ and $\mf{b} = \mf{z}_{\mf{g}} (\mf{a})$,
\end{enumerate}
where $\mf{z}_{\mf{g}}(\cdot)$ denotes taking the centralizer in $\mf{g}$.
\end{defn}

Since we will only be considering \textit{reductive} dual pairs in this paper, we will just use ``dual pair" to refer to a reductive dual pair.

\subsection{Why we are focusing on \textit{semisimple} dual pairs in \textit{simple} Lie algebras}

As discussed in \cite[Section 5]{rubenthaler}, we can reduce the study of reductive dual pairs in reductive Lie algebras to the study of semisimple dual pairs in simple Lie algebras. In particular, the following result reduces the problem to finding reductive dual pairs in semisimple Lie algebras:

\begin{lemma}[{\cite[Lemma 5.1]{rubenthaler}}]
Let $\mf{g}$ be a reductive Lie algebra. Write $\mf{g} = Z_{\mf{g}} \oplus [\mf{g}, \mf{g}]$ (where $Z_{\mf{g}}$ denotes the center of $\mf{g}$). If $(\mf{a}, \mf{b})$ is a dual pair in $\mf{g}$, then $\mf{a} = Z_{\mf{g}} \oplus \mf{a}_1$ and $\mf{b} = Z_{\mf{g}} \oplus \mf{b}_1$, where $(\mf{a}_1, \mf{b}_1)$ is a dual pair in $[\mf{g},\mf{g}]$. Conversely, any dual pair in $[\mf{g},\mf{g}]$ gives a dual pair of $\mf{g}$ in this way. 
\end{lemma}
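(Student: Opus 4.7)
The plan is to exploit the direct sum decomposition $\mathfrak{g} = Z_{\mathfrak{g}} \oplus [\mathfrak{g},\mathfrak{g}]$ as a decomposition into ideals, so that centralizers respect it. The whole lemma then becomes essentially a bookkeeping exercise, split into three checks: (i) each member of the dual pair splits compatibly with the decomposition of $\mathfrak{g}$, (ii) centralizers in $\mathfrak{g}$ translate to centralizers in $[\mathfrak{g},\mathfrak{g}]$, and (iii) reductivity in $\mathfrak{g}$ matches reductivity in $[\mathfrak{g},\mathfrak{g}]$.

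For (i), the key observation is that $Z_{\mathfrak{g}}$ is contained in every centralizer in $\mathfrak{g}$, so in particular $Z_{\mathfrak{g}} \subseteq \mathfrak{a} = \mathfrak{z}_{\mathfrak{g}}(\mathfrak{b})$ and similarly $Z_{\mathfrak{g}} \subseteq \mathfrak{b}$. Given any $x \in \mathfrak{a}$, write $x = z + y$ with $z \in Z_{\mathfrak{g}}$ and $y \in [\mathfrak{g},\mathfrak{g}]$; since $z \in \mathfrak{a}$, also $y = x - z \in \mathfrak{a} \cap [\mathfrak{g},\mathfrak{g}] =: \mathfrak{a}_1$. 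Using that $Z_{\mathfrak{g}} \cap [\mathfrak{g},\mathfrak{g}] = 0$, this gives $\mathfrak{a} = Z_{\mathfrak{g}} \oplus \mathfrak{a}_1$, and the same argument produces $\mathfrak{b}_1$.

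For (ii), I would observe that because $Z_{\mathfrak{g}}$ and $[\mathfrak{g},\mathfrak{g}]$ are ideals whose sum is direct, an element $z_0 + y_0 \in \mathfrak{g}$ centralizes $\mathfrak{a}$ if and only if $y_0$ centralizes $\mathfrak{a}_1$ inside $[\mathfrak{g},\mathfrak{g}]$ (the $z_0$ part is automatic, and $[y_0, Z_{\mathfrak{g}}] = 0$ also automatic). Hence $\mathfrak{z}_{\mathfrak{g}}(\mathfrak{a}) = Z_{\mathfrak{g}} \oplus \mathfrak{z}_{[\mathfrak{g},\mathfrak{g}]}(\mathfrak{a}_1)$. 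Comparing with $\mathfrak{b} = Z_{\mathfrak{g}} \oplus \mathfrak{b}_1$ and invoking directness of the sum yields $\mathfrak{b}_1 = \mathfrak{z}_{[\mathfrak{g},\mathfrak{g}]}(\mathfrak{a}_1)$; symmetrically, $\mathfrak{a}_1 = \mathfrak{z}_{[\mathfrak{g},\mathfrak{g}]}(\mathfrak{b}_1)$.

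For (iii), recall that being \emph{reductive in} $\mathfrak{g}$ means $\mathrm{ad}$-acting semisimply on $\mathfrak{g}$. Since $Z_{\mathfrak{g}}$ acts trivially, the $\mathrm{ad}$-action of $\mathfrak{a}$ on $\mathfrak{g}$ factors through $\mathfrak{a}_1$, and on the ideal $[\mathfrak{g},\mathfrak{g}]$ it is simply the $\mathrm{ad}$-action of $\mathfrak{a}_1$ on $[\mathfrak{g},\mathfrak{g}]$; semisimplicity of the former therefore implies semisimplicity of the latter (and conversely, using that the $Z_\mathfrak{g}$-summand carries a trivial action which is trivially semisimple). This gives the converse direction at the same time: given a dual pair $(\mathfrak{a}_1, \mathfrak{b}_1)$ in $[\mathfrak{g},\mathfrak{g}]$, the subalgebras $Z_{\mathfrak{g}} \oplus \mathfrak{a}_1$ and $Z_{\mathfrak{g}} \oplus \mathfrak{b}_1$ are reductive in $\mathfrak{g}$ and mutual centralizers by the same direct-sum argument in (ii).

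The main obstacle, if any, is not conceptual but notational: keeping straight in (ii) that the direct sum decomposition behaves well both for elements of $\mathfrak{a}$ (step (i)) and for elements computing the centralizer. Everything reduces to the slogan \emph{everything respects $\mathfrak{g} = Z_{\mathfrak{g}} \oplus [\mathfrak{g},\mathfrak{g}]$}, so once that slogan is justified rigorously the rest is automatic.
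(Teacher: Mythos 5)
Your proof is correct: since $Z_{\mf{g}}$ lies in every centralizer, both members of the pair contain it, and the ideal decomposition $\mf{g} = Z_{\mf{g}} \oplus [\mf{g},\mf{g}]$ makes centralizers and complete reducibility of the adjoint action split summand by summand, exactly as you argue in steps (i)--(iii). The paper itself states this lemma as a citation to Rubenthaler's Lemma 5.1 without reproducing a proof, and your argument is essentially the standard one given there, so there is nothing to flag.
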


Lemma \ref{lem:rub-levis} further shows that any reductive dual pair $(\mf{a}, \mf{b})$ in a semisimple Lie algebra $\mf{g}$ can be written in terms of $Z:= \mf{a} \cap \mf{b}$ and a semisimple dual pair in $[\ell_Z, \ell_Z]$, where $\ell_Z=\mf{z}_{\mf{g}}(Z)$ is a \textit{Levi subalgebra} of $\mf{g}$ with center $Z$. Before formally stating this result, let us introduce Levi subalgebras (following \cite[Section 1.2]{rubenthaler}):

Let $\mf{g}$ be a semisimple Lie algebra, and let $\mf{h}$ be a Cartan subalgebra of $\mf{g}$. Let $\Psi = (\alpha_1 , \ldots , \alpha_n)$ be a choice of simple roots associated to $(\mf{g}, \mf{h})$. For a subset $\theta$ of $\Psi$, let $H_{\theta}$ be the unique element of $\mf{h}$ defined by
  \[
    \left\{\begin{array}{ll}
        \alpha (H_{\theta}) = 0 & \text{if } \alpha \in \theta \\
        \alpha (H_{\theta}) = 2 & \text{if } \alpha \in \Psi \setminus \theta         \end{array}\right.
  \]  
For $p \in \mathbb{Z}$, define
$$d_p (\theta) = \{ X \in \mf{g} \; \vert \; [H_{\theta}, X] = 2pX \} \hspace{.5cm} \text{ so that } \hspace{.5cm} \mf{g} = \bigoplus_{p \in \mathbb{Z}} d_p (\theta).$$

\begin{defn}[{\cite[Section 1.2]{rubenthaler}}]
The reductive algebra $\ell_{\theta} := d_0 (\theta)$ is called the \textbf{standard Levi subalgebra associated to $\theta$}.
\end{defn}

\begin{lemma}[{\cite[Lemma 5.2]{rubenthaler}}] \label{lem:rub-levis}
Let $\mf{g}$ be a semisimple Lie algebra and let $(\mf{a}, \mf{b})$ be a dual pair in $\mf{g}$. If $\mf{a}$ has nontrivial center $Z_{\mf{a}}$, then $\mf{a} \cap \mf{b} = Z_{\mf{a}} = Z_{\mf{b}} =:Z$. Let $\ell_Z = \mf{z}_{\mf{g}} (Z)$. The algebra $\ell_{Z}$ is a Levi subalgebra of $\mf{g}$ whose center is $Z$. If we write $\ell_Z = Z \oplus [\ell_Z, \ell_Z]$, $\mf{a} = Z \oplus [\mf{a},\mf{a}]$, and $\mf{b} = Z \oplus [\mf{b},\mf{b}]$, then $([\mf{a},\mf{a}], [\mf{b}, \mf{b}])$ is a dual pair in $[\ell_Z, \ell_Z]$. Conversely, any dual pair in $[\ell_Z,\ell_Z]$ gives a dual pair of $\mf{g}$ in this way.
\end{lemma}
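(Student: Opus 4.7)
The plan is to establish the four subsidiary claims in sequence, using the dual-pair hypothesis $\mf{b} = \mf{z}_{\mf{g}}(\mf{a})$ and $\mf{a} = \mf{z}_{\mf{g}}(\mf{b})$ repeatedly.

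For the identification $\mf{a}\cap\mf{b} = Z_{\mf{a}} = Z_{\mf{b}}$, the inclusion $Z_{\mf{a}} \subseteq \mf{a}\cap\mf{b}$ is immediate since $Z_{\mf{a}}$ centralizes $\mf{a}$ and thus lies in $\mf{z}_{\mf{g}}(\mf{a}) = \mf{b}$; conversely, an element of $\mf{a}\cap\mf{b}$ both belongs to $\mf{a}$ and centralizes $\mf{a}$, so lies in $Z_{\mf{a}}$. Symmetry gives the analogous equality with $Z_{\mf{b}}$, so $Z_{\mf{a}} = Z_{\mf{b}} =: Z$.

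To show that $\ell_Z = \mf{z}_{\mf{g}}(Z)$ is a Levi subalgebra with center $Z$, I would first note that since $\mf{a}$ is reductive in $\mf{g}$, the abelian subalgebra $Z$ consists of ad-semisimple elements, hence is toral and lies in some Cartan $\mf{h}$. Then $\ell_Z = \mf{h} \oplus \bigoplus_{\alpha \in R,\, \alpha|_Z = 0} \mf{g}_\alpha$ has the standard Levi form. To compute its center, I would observe that both $\mf{a}$ and $\mf{b}$ sit inside $\ell_Z$: the former because $Z \subseteq Z_{\mf{a}}$, the latter because $\mf{b} = \mf{z}_{\mf{g}}(\mf{a}) \subseteq \mf{z}_{\mf{g}}(Z) = \ell_Z$. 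Hence any element of $Z(\ell_Z)$ centralizes both $\mf{a}$ and $\mf{b}$, and therefore lies in $\mf{z}_{\mf{g}}(\mf{a}) \cap \mf{z}_{\mf{g}}(\mf{b}) = \mf{b}\cap\mf{a} = Z$. The reverse inclusion is automatic from $\ell_Z = \mf{z}_{\mf{g}}(Z)$, so $Z(\ell_Z) = Z$ and $\ell_Z = Z \oplus [\ell_Z,\ell_Z]$.

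For the duality in $[\ell_Z,\ell_Z]$, I would use $\mf{a} = Z \oplus [\mf{a},\mf{a}]$ to rewrite $\mf{b} = \mf{z}_{\mf{g}}(\mf{a}) = \ell_Z \cap \mf{z}_{\mf{g}}([\mf{a},\mf{a}]) = \mf{z}_{\ell_Z}([\mf{a},\mf{a}])$, then split along $\ell_Z = Z \oplus [\ell_Z,\ell_Z]$ and compare with $\mf{b} = Z \oplus [\mf{b},\mf{b}]$ to read off $\mf{z}_{[\ell_Z,\ell_Z]}([\mf{a},\mf{a}]) = [\mf{b},\mf{b}]$; reductivity of $[\mf{a},\mf{a}]$ in $[\ell_Z,\ell_Z]$ follows from Weyl's theorem applied to the semisimple algebra $[\mf{a},\mf{a}]$. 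The converse runs this in reverse: given a dual pair $(\mf{a}_1, \mf{b}_1)$ in $[\ell_Z,\ell_Z]$, one checks $\mf{z}_{\mf{g}}(Z \oplus \mf{a}_1) = \mf{z}_{\ell_Z}(\mf{a}_1) = Z \oplus \mf{z}_{[\ell_Z,\ell_Z]}(\mf{a}_1) = Z \oplus \mf{b}_1$, and $Z \oplus \mf{a}_1$ is reductive in $\mf{g}$ because $Z$ is toral and $\mf{a}_1$ is semisimple.

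The main subtlety, I believe, is the equality $Z(\ell_Z) = Z$: a purely root-theoretic argument does not give this in general, since the annihilator in $\mf{h}$ of the roots vanishing on $Z$ can strictly contain $Z$ when $Z$ is an arbitrary toral subalgebra. The dual-pair hypothesis is what forces $Z(\ell_Z) \subseteq \mf{a}\cap\mf{b} = Z$, and once this is in place, the remaining steps are careful bookkeeping on the reductive decompositions $\mf{a} = Z \oplus [\mf{a},\mf{a}]$, $\mf{b} = Z \oplus [\mf{b},\mf{b}]$, and $\ell_Z = Z \oplus [\ell_Z,\ell_Z]$.
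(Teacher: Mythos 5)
Your proof is correct, but note that the paper itself offers no argument to compare against: Lemma \ref{lem:rub-levis} is imported verbatim from Rubenthaler's paper (his Lemma 5.2) and is stated in Section \ref{sec:prelims} purely as a citation, so your write-up has to be judged on its own terms. On those terms it holds up. The chain $Z_{\mf{a}} \subseteq \mf{a}\cap\mf{b} \subseteq Z_{\mf{a}}$ and its mirror image are exactly right; the reduction of $\ell_Z$ to a Levi uses the standard equivalence that ``reductive in $\mf{g}$'' forces the center $Z$ to consist of $\mathrm{ad}$-semisimple elements, hence to be toral; and your splitting computations $\mf{z}_{\ell_Z}([\mf{a},\mf{a}]) = Z \oplus \mf{z}_{[\ell_Z,\ell_Z]}([\mf{a},\mf{a}])$ (valid because $Z$ is central in $\ell_Z$ and $Z \cap [\ell_Z,\ell_Z] = 0$) correctly transfer the duality in both directions. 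You also correctly isolate the one genuinely non-formal point: for an arbitrary toral $Z$ one only gets $Z(\ell_Z) \supseteq Z$ (e.g.~$Z$ spanned by $\mathrm{diag}(1,-1,0)$ in $\mf{sl}_3$ has $\ell_Z = \mf{h}$ with two-dimensional center), and it is precisely the dual-pair hypothesis, via $\mf{a}, \mf{b} \subseteq \ell_Z$ and $Z(\ell_Z) \subseteq \mf{z}_{\mf{g}}(\mf{a}) \cap \mf{z}_{\mf{g}}(\mf{b}) = \mf{a} \cap \mf{b} = Z$, that closes the gap. One small caveat on the converse: your reductivity argument assumes $\mf{a}_1$ is semisimple, which matches the paper's standing convention that the dual pairs in $[\ell_Z,\ell_Z]$ are semisimple; if one wanted the converse for general reductive pairs $(\mf{a}_1,\mf{b}_1)$, you would additionally observe that elements of $Z_{\mf{a}_1}$ that are $\mathrm{ad}$-semisimple on the semisimple algebra $[\ell_Z,\ell_Z]$ remain $\mathrm{ad}$-semisimple on $\mf{g}$ by compatibility of Jordan decompositions, so that $Z \oplus \mf{a}_1$ is still reductive in $\mf{g}$.
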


Finally, the following result further reduces the problem to finding semisimple dual pairs in simple Lie algebras:

\begin{lemma}[{\cite[Lemma 5.3]{rubenthaler}}]
Let $\mf{g}$ be a semisimple Lie algebra and let $\mf{g} = \mf{g}_1 \oplus \cdots \oplus \mf{g}_n$ be its decomposition into simple ideals. If for each $i \in \{ 1 , \ldots , n \}$ we have a dual pair $(\mf{a}_i, \mf{b}_i)$ in $\mf{g}_i$, then $\left ( \prod_{i=1}^n \mf{a}_i , \prod_{i=1}^n \mf{b}_i \right )$ is a dual pair in $\mf{g}$. Conversely, any dual pair in $\mf{g}$ is obtained in this way.
\end{lemma}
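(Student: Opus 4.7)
My plan is to prove the two directions separately, with the forward direction being essentially immediate and the converse following from a single key observation about how centralizers interact with the direct sum decomposition.

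For the forward direction, I would start from the fact that $[\mf{g}_i, \mf{g}_j] = 0$ for $i \neq j$, which gives the product formula
\[
\mf{z}_{\mf{g}}\Bigl(\prod_i \mf{a}_i\Bigr) = \prod_i \mf{z}_{\mf{g}_i}(\mf{a}_i) = \prod_i \mf{b}_i,
\]
and symmetrically for $\prod_i \mf{b}_i$. Reductiveness of $\prod_i \mf{a}_i$ in $\mf{g}$ follows from reductiveness of each $\mf{a}_i$ in $\mf{g}_i$, since the adjoint action on $\mf{g}$ splits into the adjoint actions on each $\mf{g}_i$.

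For the converse, the key lemma I would establish first is that for \emph{any} subalgebra $\mf{s} \subseteq \mf{g}$, writing $\pi_i \colon \mf{g} \to \mf{g}_i$ for the projection, we have
\[
\mf{z}_{\mf{g}}(\mf{s}) \;=\; \bigoplus_{i=1}^n \mf{z}_{\mf{g}_i}\bigl(\pi_i(\mf{s})\bigr).
\]
This is a direct check: an element $X = (X_1,\ldots,X_n)$ commutes with every $S = (S_1,\ldots,S_n) \in \mf{s}$ if and only if $[X_i, S_i] = 0$ for every $i$ and every $S \in \mf{s}$, which is exactly the condition that $X_i \in \mf{z}_{\mf{g}_i}(\pi_i(\mf{s}))$.

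Applying this to $\mf{s} = \mf{a}$ gives $\mf{b} = \mf{z}_{\mf{g}}(\mf{a}) = \bigoplus_i \mf{b}_i$ where $\mf{b}_i := \mf{z}_{\mf{g}_i}(\pi_i(\mf{a})) \subseteq \mf{g}_i$. In particular $\mf{b}$ is already a direct sum of subalgebras of the $\mf{g}_i$, so $\pi_i(\mf{b}) = \mf{b}_i$. Applying the key lemma again to $\mf{s} = \mf{b}$ yields
\[
\mf{a} \;=\; \mf{z}_{\mf{g}}(\mf{b}) \;=\; \bigoplus_i \mf{z}_{\mf{g}_i}(\mf{b}_i),
\]
so $\mf{a}$ also respects the decomposition. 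Setting $\mf{a}_i := \mf{z}_{\mf{g}_i}(\mf{b}_i)$, we then have $\pi_i(\mf{a}) = \mf{a}_i$, which gives $\mf{b}_i = \mf{z}_{\mf{g}_i}(\mf{a}_i)$ and $\mf{a}_i = \mf{z}_{\mf{g}_i}(\mf{b}_i)$, confirming that $(\mf{a}_i, \mf{b}_i)$ is a dual pair in $\mf{g}_i$. Reductiveness of each $\mf{a}_i$, $\mf{b}_i$ in $\mf{g}_i$ is inherited from the reductiveness of $\mf{a}, \mf{b}$ in $\mf{g}$ by restricting the adjoint action to the ideal $\mf{g}_i$.

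There is no real obstacle here; the only substantive content is the centralizer identity $\mf{z}_{\mf{g}}(\mf{s}) = \bigoplus_i \mf{z}_{\mf{g}_i}(\pi_i(\mf{s}))$, and everything else is bookkeeping. The slightly delicate point — worth flagging explicitly — is that one cannot conclude $\mf{a}$ decomposes just from looking at $\mf{a}$ in isolation (e.g.\ a diagonal $\mf{sl}_2 \subset \mf{sl}_2 \oplus \mf{sl}_2$ does not decompose), which is why the argument has to route through $\mf{b}$: it is the \emph{double} centralizer condition of being a dual pair that forces the decomposition.
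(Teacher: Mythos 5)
Your proof is correct and complete. Note that the paper does not actually prove this statement --- it is quoted directly from Rubenthaler's paper (his Lemma 5.3) with no proof supplied --- so there is no in-paper argument to compare against; your write-up is exactly the standard argument one would expect that citation to contain. The centralizer identity $\mf{z}_{\mf{g}}(\mf{s}) = \bigoplus_i \mf{z}_{\mf{g}_i}(\pi_i(\mf{s}))$ is the right key lemma (its verification using $[\mf{g}_i,\mf{g}_j]=0$ is as routine as you claim), and your remark at the end identifies the genuinely delicate point: a single member of a dual pair need not respect the decomposition a priori (the diagonal $\mf{sl}_2$ example is apt), and it is precisely because each member is itself a centralizer --- hence automatically of the form $\bigoplus_i \mf{z}_{\mf{g}_i}(\pi_i(\cdot))$ --- that both members are forced to split along the simple ideals. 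The only steps you leave implicit, the reductiveness transfers in both directions, are harmless: since $\mf{a}_j$ acts by zero on $\mf{g}_i$ for $j \neq i$, the $\mf{a}$-submodules of the ideal $\mf{g}_i$ coincide with its $\mf{a}_i$-submodules, so complete reducibility passes back and forth exactly as you assert.
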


Note that we will often omit the adjective ``semisimple" in the following discussion of the semisimple dual pairs in the complex simple Lie algebras. (All subalgebras that follow can be assumed to be semisimple.)

\subsection{Defining a normalized symmetric form on \texorpdfstring{$\mf{g}$}{g}}

Let $\mf{g}$ be a complex simple Lie algebra, and fix a Cartan subalgebra $\mf{h}$. Let $\mathcal{R}$ denote the root system corresponding to $(\mf{g}, \mf{h})$. Let $\mf{g}_0$ be a compact real form of $\mf{g}$ with Cartan $\mf{h}_0$ (defined over $\mathbb{R}$) satisfying $(\mf{h}_0)_{\mathbb{C}} = \mf{h}$. Recall that any $\alpha \in \mathcal{R}$ is determined (by $\mathbb{C}$-linearity) by its values on $i\mf{h}_0$, on which it is real-valued. Therefore, we can view $\mathcal{R}$ as sitting inside $(i\mf{h}_0)^*$. We identify $(i\mf{h}_0)^*$ with a subset of $\mathbb{R}^n$ such that the standard Euclidean inner product $\langle \cdot, \cdot \rangle$ on $\mathbb{R}^n$ restricts to a Weyl group-invariant form on $(i\mf{h}_0)^*$. In particular, we use the root system realizations described in \cite[Plates I--IX]{bourbaki}, writing elements of $i\mf{h}_0$ in terms of the standard Euclidean basis $\{ e_1, \ldots , e_n \}$ with $e_1 = (1, 0 ,\ldots , 0)$, etc., and writing elements of $(i\mf{h}_0)^*$ in terms of the corresponding standard basis of linear functionals $\{ \varepsilon_1 , \ldots , \varepsilon_n \}$.

Following \cite[Section 2]{dynkin}, we normalize a non-degenerate invariant symmetric bilinear form $(,)_{\mf{g}}$ on $\mf{g}$ as follows: for $\beta^{\vee}$ a short coroot in $\mathcal{R}^{\vee}$, we require that $(\beta^{\vee}, \beta^{\vee})_{\mf{g}} = 2$. We also define a non-degenerate invariant symmetric bilinear form $(,)_{\mf{g}^*}$ on $\mf{g}^*$ by the condition $(\beta , \beta )_{\mf{g}^*} = 2$ for $\beta$ a long root in $\mathcal{R}$. (Note that $(,)_{\mf{g}^*}$ can also be defined as the dual form to the form on $\mf{g}$; this dual form automatically satisfies the aforementioned normalizing condition.)

\subsection{Dynkin's index notation}

We now introduce Dynkin's index notation (cf.~\cite[Section 2]{dynkin}). Let $\phi : \mf{a} \rightarrow \mf{g}$ be a homomorphism of complex simple Lie algebras. For $x,y \in \mf{a}$, the bilinear form $(x,y) \mapsto ( \phi(x), \phi(y) )_{\mf{g}}$ is proportional to $(x,y) \mapsto (x,y)_{\mf{a}}$. The \textbf{index $i_{\phi}$ of $\phi$} is defined by
$$(\phi (x), \phi (y))_{\mf{g}} = i_{\phi} \cdot (x,y)_{\mf{a}}, \hspace{.25cm} x,y \in \mf{a};$$
the index is always an integer \cite[Theorem 2.2]{dynkin}. In particular, if $\mf{a}$ is a simple subalgebra of $\mf{g}$, then the \textbf{index of $\mf{a}$ in $\mf{g}$} is given by
$$i_{\mf{a} \hookrightarrow \mf{g}} = \frac{(x,x)_{\mf{g}}}{(x,x)_{\mf{a}}}, \hspace{.25cm} x \in \mf{a}.$$
When we have a sequence of inclusions $\mf{a} \hookrightarrow \mf{a}' \hookrightarrow \mf{g}$, the index satisfies the following multiplicative property:
$$i_{\mf{a}\hookrightarrow \mf{g}} = i_{\mf{a} \hookrightarrow \mf{a}'} \cdot i_{\mf{a}'\hookrightarrow \mf{g}}.$$

The index of $\mf{a}$ in $\mf{g}$ often determines the conjugacy class of $\mf{a}$ (where the conjugation is by the adjoint group of $\mf{g}$). When the conjugacy class of $\mf{a}$ is fully specified by its type and index, we will follow Dynkin in writing $\text{Type}(\mf{a})^{ i_{\mf{a} \hookrightarrow \mf{g}} }$ to refer to the conjugacy class of $\mf{a}$. If, on the other hand, the conjugacy class of $\mf{a}$ is not specified by its type and index, we will add ``primes" to distinguish the possible conjugacy classes. For example, by \cite[Table 25]{dynkin}, there are two conjugacy classes of subalgebras of the complex Lie algebra $E_7$ that have type $A_2$ and index 2; we will refer to them as $A_2^{2'}$ and $A_2^{2''}$ (by order of appearance in \cite[Table 25]{dynkin}). Similarly, by Appendix B, there are two conjugacy classes of subalgebras of the complex Lie algebra $B_4$ that have type $B_2$ and index 1; we will refer to them as $B_2^{1'}$ and $B_2^{1''}$ (by order of appearance in Table \ref{table:B2-subalgebras}). To know whether we need to add these ``primes," we will continue to refer to \cite{dynkin} for subalgebras of the exceptional Lie algebras and to Appendices A and B for subalgebras of the classical simple Lie algebras.

\begin{example}
Consider $\mf{sp}_2 \oplus \mf{sp}_{2m} \subset \mf{so}_{4m}$. (Here, $\mf{sp}_2 \hookrightarrow \mf{so}_{4m}$ can be thought of as a diagonal embedding with $2m$ copies, and $\mf{sp}_{2m} \hookrightarrow \mf{so}_{4m}$ can be thought of as ``multiplying" each entry by the $2 \times 2$ identity matrix.) Let's compute the index of $\mf{sp}_2$ and $\mf{sp}_{2m}$ in $\mf{so}_{4m}$. Following \cite[Plate IV]{bourbaki}, $\mf{so}_{4m}$ has 
$$\{ \varepsilon_1 - \varepsilon_2 , \, \varepsilon_2 - \varepsilon_3 , \ldots , \, \varepsilon_{2m-1} - \varepsilon_{2m} , \, \varepsilon_{2m-1} + \varepsilon_{2m} \}$$ 
as its simple roots, with corresponding simple coroots
$$\{ e_1 - e_2 , \, e_2 - e_3 , \ldots , \, e_{2m-1} - e_{2m} , \, e_{2m-1} + e_{2m} \}.$$
Set $z:= e_1 - e_2$. This coroot corresponds to the diagonal matrix 
$$\text{diag}(1,-1,0,\ldots, 0 ; -1,1,0,\ldots ,0)$$
in $\mf{so}_{4m}$, so we see that the normalizing condition on $(\cdot , \cdot )_{\mf{so}_{4m}}$ is 
$$(z,z)_{\mf{so}_{4m}} = 2 = \frac{1}{2} \langle z , z \rangle,$$
where $\langle \cdot , \cdot \rangle$ denotes the standard Euclidean inner product. 

Now, following \cite[Plate III]{bourbaki}, $\mf{sp}_2$ has $2 \varepsilon_1$ as its simple root, and has $x:=e_1$ as the corresponding simple coroot. Similarly, $\mf{sp}_{2m}$ has simple roots 
$$\{ \varepsilon_1 - \varepsilon_2, \, \varepsilon_2 - \varepsilon_3 , \ldots , \, \varepsilon_{m-1} - \varepsilon_m, 2 \varepsilon_m \},$$
with corresponding simple coroots 
$$\{ e_1 - e_2, \, e_2 - e_3, \ldots , \, e_{m-1} - e_m, \, y:=e_m \}.$$
Thus, the normalizing conditions on $(\cdot , \cdot )_{\mf{sp}_2}$ and $(\cdot , \cdot )_{\mf{sp}_{2m}}$ are $(x, x)_{\mf{sp}_2} = 2$ and $(y,y)_{\mf{sp}_{2m}} = 2$.

Here, $x$ corresponds to the matrix $\text{diag}(1;-1)$ in $\mf{sp}_2$ and $y$ corresponds to the matrix $\text{diag}(0,\ldots , 0 , 1 ; 0,\ldots , 0 , -1)$ in $\mf{sp}_{2m}$. Since $\mf{sp}_2$ is embedded diagonally (with $2m$ copies) in $\mf{so}_{4m}$, we see that
\begin{align*}
     (x,x)_{\mf{so}_{4m}} = \frac{1}{2} \Big  \langle & \text{diag}(1,-1,\ldots , 1,-1 ; -1,1,\ldots , -1,1) , \\
     & \text{diag}(1,-1,\ldots , 1,-1 ; -1,1,\ldots , -1,1) \Big \rangle = 2m.
\end{align*}

Similarly, we see that 
\begin{align*}
    \displaystyle (y,y)_{\mf{so}_{4m}} = \frac{1}{2} \Big \langle & \text{diag} (0,\ldots,0,1,1;0,\ldots,0,-1,-1), \\
    & \text{diag} (0,\ldots,0,1,1;0,\ldots,0,-1,-1) \Big \rangle = 2.
\end{align*}

It follows that
$$i_{\mf{sp}_2 \hookrightarrow \mf{so}_{4m}} = \frac{(x,x)_{\mf{so}_{4m}}}{(x,x)_{\mf{sp}_2}} = \frac{2m}{2} = m \hspace{.5cm} \text{ and } \hspace{.5cm} i_{\mf{sp}_{2m} \hookrightarrow \mf{so}_{4m}} = \frac{ (y,y)_{\mf{so}_{4m}} }{ (y,y)_{\mf{sp}_{2m}} } = \frac{2}{2} = 1.$$
In this way, we see that $\mf{sp}_2 = C_1^{m} \simeq A_1^m$ (possibly with ``primes") and that $\mf{sp}_{2m} = C_m^1$ (possibly with ``primes"). This will appear in later examples of dual pairs; specifically, $(A_1^{2'},B_2^{1'})$, $(A_1^{2''}, B_2^{1''})$, and $(A_1^{2'''}, B_2^{1'''})$ are dual pairs in $D_4$, and $(A_1^{3'}, C_3^{1''})$ and $(A_1^{3''}, C_3^{1'})$ are dual pairs in $D_6$ (see Table \ref{table:dps-in-Dn}).  
\end{example}

\subsection{Dynkin's defining vector conventions and weighted diagrams}

Let $\mf{a}$ be a three-dimensional semisimple subalgebra of a complex simple Lie algebra $\mf{g}$. (Any such subalgebra is isomorphic to $\mf{sl}_2$.) A vector $H \in \mf{a}$ which can be supplemented with two vectors $X,Y \in \mf{a}$ to form an $\mf{sl}_2$-triplet, i.e.~a basis of $\mf{a}$ satisfying the commutator relations
\begin{align*}
[H,X] &= 2X \\
[H,Y] &= -2Y \\
[X,Y] &= H,
\end{align*}
is called a \textbf{defining vector} of $\mf{a}$. 

\begin{remark}
For a three-dimensional subalgebra $\mf{a}$ of $\mf{g}$ with defining vector $H$, the index can be computed as 
$$i_{\mf{a} \hookrightarrow \mf{g}} = \frac{(H,H)_{\mf{g}}}{(H,H)_{\mf{a}}} = \frac{(H,H)_{\mf{g}}}{2}.$$
\end{remark}

Fix a defining vector $H$ for $\mf{a}$. Choose a Cartan subalgebra $\mf{h}$ containing $H$, as well as a positive root system making $H$ weakly dominant (i.e.~$\alpha (H) \ge 0$ whenever $\alpha >0$). Let $\Pi$ be the corresponding system of simple roots. In \cite{dynkin}, Dynkin often writes the defining vector of $\mf{a}$ in terms of the simple coroots of $\mf{g}$ (appropriately normalized). In particular, let $\Pi_{\ell}$ denote the set of long simple roots in $\Pi$ and let $\Pi_{s}$ denote the set of short simple roots in $\Pi$. Set $r := ( \beta , \beta )_{\mf{g}} / (\gamma , \gamma)_{\mf{g}}$ for $\beta \in \Pi_{\ell}$ and $\gamma \in \Pi_s$ (so that $r=1$ if $\mf{g}$ has only one root length, $r=3$ if $\mf{g}$ is of type $G_2$, and $r=2$ in all other cases of two root lengths). Then Dynkin writes the defining vector $H$ of $\mf{a}$ as 
$$H = \sum_{ \beta \in \Pi_{\ell} } a_{\beta} \beta^{\vee} + \sum_{\gamma \in \Pi_s} a_{\gamma} \frac{\gamma^{\vee}}{r},$$
and he calls the coefficients $\{ a_{\beta}, a_{\gamma} \}_{ \beta \in \Pi_{\ell}, \, \gamma \in \Pi_s }$ the \textbf{coordinates of the defining vector} $H$.  

Now, consider the Dynkin diagram for $\mf{g}$. There is a unique correspondence between the nodes of this diagram and the elements of $\Pi$. Let us associate the number $\alpha (H)$ with the node corresponding to $\alpha$. The resulting diagram (with numbers written in) is called the \textbf{weighted diagram} of the subalgebra $\mf{a}$.  

\begin{figure}
\begin{tikzpicture}[scale=1.65]
    \begin{scope}[xshift=-1.6cm]
    \node at (.25,.65) {$G_2$};
    \node (dot2) at (0,0) [draw, circle, inner sep=2pt, fill=black] {};
    \node at (-.1,.2) {$\alpha_1$};
    \node (dot3) at (.5,0) [draw, circle, inner sep=2pt, fill=black] {};
    \node at (.6,.2) {$\alpha_2$};
    \draw (dot2.north) -- (dot3.north);
    \draw (dot2.south) -- (dot3.south);
    \draw (dot2) -- (dot3);
    \draw (.32,0) -- (.18,.12);
    \draw (.32,0) -- (.18,-.12);
    \end{scope}

    \begin{scope}[xshift=.5cm]
    \node at (.75,.65) {$F_4$};
    \node (dot1) at (0,0) [draw, circle, inner sep=1.5pt, fill=black] {};
    \node at (0,.2) {$\alpha_1$};
    \node (dot2) at (.5,0) [draw, circle, inner sep=1.5pt, fill=black] {};
    \node at (.5,.2) {$\alpha_2$};
    \node (dot3) at (1,0) [draw, circle, inner sep=1.5pt, fill=black] {};
    \node at (1,.2) {$\alpha_3$};
    \node (dot4) at (1.5,0) [draw, circle, inner sep=1.5pt, fill=black] {};
    \node at (1.5,.2) {$\alpha_4$};
    \draw (dot2.north) -- (dot3.north);
    \draw (dot2.south) -- (dot3.south);
    \draw (.82,0) -- (.68,.12);
    \draw (.82,0) -- (.68,-.12);
    \draw (dot1) -- (dot2);
    \draw (dot3) -- (dot4);
    \end{scope}

    \begin{scope}[xshift=3.4cm]

    \node at (1,.65) {$E_6$};
    \node (dot1) at (0,0) [draw, circle, inner sep=1.5pt, fill=black] {};
    \node at (0,.2) {$\alpha_1$};
    \node (dot2) at (.5,0) [draw, circle, inner sep=1.5pt, fill=black] {};
    \node at (.5,.2) {$\alpha_3$};
    \node (dot3) at (1,0) [draw, circle, inner sep=1.5pt, fill=black] {};
    \node at (1,.2) {$\alpha_4$};
    \node (dot4) at (1.5,0) [draw, circle, inner sep=1.5pt, fill=black] {};
    \node at (1.5,.2) {$\alpha_5$};
    \node (dot5) at (2,0) [draw, circle, inner sep=1.5pt, fill=black] {};
    \node at (2,.2) {$\alpha_6$};
    \node (dot6) at (1,-.5) [draw, circle, inner sep=1.5pt, fill=black] {};
    \node at (1,-.7) {$\alpha_2$};

    \draw (dot1) -- (dot2);
    \draw (dot2) -- (dot3);
    \draw (dot3) -- (dot4);
    \draw (dot3) -- (dot6);
    \draw (dot4) -- (dot5);
    \end{scope}

    \begin{scope}[xshift=-1.5cm,yshift=-1.5cm]
    \node at (1,.65) {$E_7$};
    \node (dot1) at (0,0) [draw, circle, inner sep=1.5pt, fill=black] {};
    \node at (0,.2) {$\alpha_1$};
    \node (dot2) at (.5,0) [draw, circle, inner sep=1.5pt, fill=black] {};
    \node at (.5,.2) {$\alpha_3$};
    \node (dot3) at (1,0) [draw, circle, inner sep=1.5pt, fill=black] {};
    \node at (1,.2) {$\alpha_4$};
    \node (dot4) at (1.5,0) [draw, circle, inner sep=1.5pt, fill=black] {};
    \node at (1.5,.2) {$\alpha_5$};
    \node (dot5) at (2,0) [draw, circle, inner sep=1.5pt, fill=black] {};
    \node at (2,.2) {$\alpha_6$};
    \node (dot7) at (2.5,0) [draw, circle, inner sep=1.5pt, fill=black] {};
    \node at (2.5,.2) {$\alpha_7$};
    \node (dot6) at (1,-.5) [draw, circle, inner sep=1.5pt, fill=black] {};
    \node at (1,-.7) {$\alpha_2$};

    \draw (dot1) -- (dot2);
    \draw (dot2) -- (dot3);
    \draw (dot3) -- (dot4);
    \draw (dot3) -- (dot6);
    \draw (dot4) -- (dot5);
    \draw (dot5) -- (dot7);
    \end{scope}

    \begin{scope}[xshift=2cm, yshift=-1.5cm]
    \node at (1,.65) {$E_8$};
    \node (dot1) at (0,0) [draw, circle, inner sep=1.5pt, fill=black] {};
    \node at (0,.2) {$\alpha_1$};
    \node (dot2) at (.5,0) [draw, circle, inner sep=1.5pt, fill=black] {};
    \node at (.5,.2) {$\alpha_3$};
    \node (dot3) at (1,0) [draw, circle, inner sep=1.5pt, fill=black] {};
    \node at (1,.2) {$\alpha_4$};
    \node (dot4) at (1.5,0) [draw, circle, inner sep=1.5pt, fill=black] {};
    \node at (1.5,.2) {$\alpha_5$};
    \node (dot5) at (2,0) [draw, circle, inner sep=1.5pt, fill=black] {};
    \node at (2,.2) {$\alpha_6$};
    \node (dot7) at (2.5,0) [draw, circle, inner sep=1.5pt, fill=black] {};
    \node at (2.5,.2) {$\alpha_7$};
    \node (dot8) at (3,0) [draw, circle, inner sep=1.5pt, fill=black] {};
    \node at (3,.2) {$\alpha_8$};
    \node (dot6) at (1,-.5) [draw, circle, inner sep=1.5pt, fill=black] {};
    \node at (1,-.7) {$\alpha_2$};

    \draw (dot1) -- (dot2);
    \draw (dot2) -- (dot3);
    \draw (dot3) -- (dot4);
    \draw (dot3) -- (dot6);
    \draw (dot4) -- (dot5);
    \draw (dot5) -- (dot7);
    \draw (dot7) -- (dot8);
    \end{scope}
    
\end{tikzpicture}
    \caption{Root labeling conventions for types $G_2$, $F_4$, $E_6$, $E_7$, and $E_8$.}
    \label{fig:root-labelings}
\end{figure}
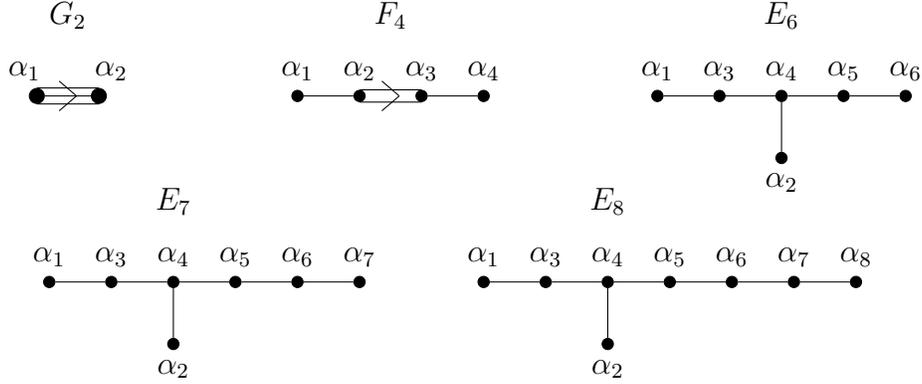

\begin{example}
To illustrate the above definitions, let us verify some of the entries in \cite[Table 16]{dynkin}, which states the index, weighted diagram, and defining vector for the three-dimensional subalgebras of the exceptional Lie algebra $\mf{g}$ of type $G_2$.

By \cite[Plate IX]{bourbaki}, we can realize this root system as the hyperplane in $\mathbb{R}^3$ defined by the equation $\xi_1 + \xi_2 + \xi_3 = 0$, with the following roots:
\begin{align*}
& \pm (\varepsilon_1 - \varepsilon_2), \hspace{.25cm} \pm (\varepsilon_1 - \varepsilon_3), \hspace{.25cm} \pm (\varepsilon_2 - \varepsilon_3),\\
 \pm  (2\varepsilon_1 & - \varepsilon_2 - \varepsilon_3), \hspace{.25cm} \pm (2 \varepsilon_2 - \varepsilon_1 - \varepsilon_3), \hspace{.25cm} \pm (2 \varepsilon_3 - \varepsilon_1 - \varepsilon_2).
\end{align*}
This system has basis $\alpha_1 = -2\varepsilon_1 + \varepsilon_2 + \varepsilon_3$ and $\alpha_2 = \varepsilon_1 - \varepsilon_2$ (cf.~Figure \ref{fig:root-labelings}), with corresponding coroots $\alpha_1^{\vee} = \frac{1}{3} (-2,1,1)$ and $\alpha_2^{\vee} = (1,-1,0)$. 

Let us consider the first subalgebra in \cite[Table 16]{dynkin}, with defining vector coordinates $(2 ,3)$. Then the defining vector is given by
$$H:= 2 \alpha_1^{\vee} + 3 \left ( \frac{\alpha_2^{\vee}}{3} \right ) = \left ( - \frac{1}{3}, - \frac{1}{3}, \frac{2}{3} \right ) .$$
So we see that 
\begin{align*}
\alpha_1 (H) &= \frac{2}{3} - \frac{1}{3} + \frac{2}{3} = 1, \text{ and } \\
\alpha_2 (H) &= - \frac{1}{3} + \frac{1}{3} + 0 = 0.
\end{align*}
Therefore, the weighted diagram of this three-dimensional subalgebra is
\begin{center}
\begin{tabular}{c c}
1 & 0
\end{tabular}
\end{center}
as expected. Since $\alpha_1^{\vee}$ is our short coroot, the normalizing condition on $(\cdot , \cdot)_{\mf{g}}$ is that $(\alpha_1^{\vee}, \alpha_1^{\vee})_{\mf{g}} = 2 = 3 \cdot \langle \alpha_1^{\vee}, \alpha_1^{\vee} \rangle$. Therefore, the index of this subalgebra in $G_2$ is given by
$$\frac{( H ,H )_{ \mf{g} }}{2} = \frac{3 \cdot (2/3) }{2} = 1,$$
as expected.
\end{example}

\section{Maximal and maximal-rank regular subalgebras} \label{sec:max-reg}

In this section, we introduce the notions of \textit{maximal regular subalgebras} and \textit{maximal-rank regular subalgebras}. We also classify the semisimple dual pairs coming from maximal regular subalgebras of maximal rank (see Proposition \ref{prop:maximal-reg-is-dp} and Table \ref{table:max-reg-dps}) and discuss briefly how maximal-rank regular subalgebras will play a crucial role later in the paper.

\begin{defn}[{\cite[Definition 5.6]{rubenthaler}}]
Let $\mf{g}$ be a semisimple Lie algebra. A subalgebra $\mf{u}$ of $\mf{g}$ is called \textbf{regular} if there exists a Cartan subalgebra $\mf{h}$ of $\mf{g}$ such that $\mf{u}$ is invariant under $\text{ad}_{\mf{g}} \mf{h}$ (i.e. $[\mf{h},\mf{u}]\subset \mf{u}$). 
\end{defn}

\begin{defn}
Let $\mf{g}$ be a semisimple Lie algebra.
\begin{itemize}
\item A \textbf{maximal regular subalgebra} of $\mf{g}$ is a proper regular semisimple subalgebra $\tilde{\mf{g}}$ of $\mf{g}$ for which no regular semisimple subalgebra $\mf{g}^*$ exists such that $\tilde{\mf{g}} \subsetneq \mf{g}^* \subsetneq \mf{g}$.
\item A \textbf{maximal-rank regular subalgebra} of $\mf{g}$ is a proper regular semisimple subalgebra with rank equal to the rank of $\mf{g}$.\footnote{In \cite{dynkin} and \cite{rubenthaler}, Dynkin and Rubenthalter do not require \textit{maximal regular subalgebras} or \textit{maximal-rank regular subalgebras} to be semisimple. Since the maximal/maximal-rank regular subalgebras we consider in this paper will all be semisimple, we include the ``semisimple" qualifier in the definition for convenience.} 
\end{itemize}
\end{defn}

As we will see below, a maximal-rank regular subalgebra need not be maximal, and a maximal regular subalgebra need not have maximal rank. It will be important to keep this in mind moving forward. In fact, mistakenly conflating these notions seems to have contributed to some errors in \cite{dynkin} and \cite{rubenthaler} that we will soon be discussing.

\subsection{Maximal-rank regular subalgebras}

Let $\mf{g}$ be a simple Lie algebra. It was shown in \cite[No.~17]{dynkin} that the maximal-rank regular subalgebras in $\mf{g}$ are obtained (up to conjugation) by applying a finite number of \textit{elementary operations} to the Dynkin diagram $D$ of $\mf{g}$. 

In each \textit{elementary operation}, we first pass from $D$ to the \textbf{extended Dynkin diagram} $\tilde{D}$ of $\mf{g}$, which is the diagram obtained from $D$ by adding a node corresponding to $\tilde{\alpha}$ (where $\tilde{\alpha}$ is the lowest root of $\mf{g}$). This new node gets connected to $D$ using the usual Dynkin diagram edge rules. The resulting diagram $\tilde{D}$ is not the Dynkin diagram of a semisimple algebra, but if we remove a node $\alpha_k$ from $\tilde{D}$, we obtain the Dynkin diagram $D_{\alpha_k}$ of a semisimple algebra. When $\alpha_k = \tilde{\alpha}$, we simply recover the Dynkin diagram $D$ of $\mf{g}$. When $\alpha_k \neq \tilde{\alpha}$, it is not hard to check that the corresponding semisimple algebra, $\mf{g}(\alpha_k)$ (i.e.~the subalgebra of $\mf{g}$ generated by the root spaces $\mf{g}^{\pm \alpha}$ for $\alpha \in D_{\alpha_k}$), is a maximal-rank regular subalgebra of $\mf{g}$. For $\alpha_k \neq \tilde{\alpha}$, the process of passing from $\mf{g}$ to $\mf{g}(\alpha_k)$ that we have just described is called an \textbf{elementary operation}. For $\mf{g}$ a semisimple Lie algebra, an \textbf{elementary operation} is the process of applying the above process to a single simple factor of $\mf{g}$.  

\begin{remark} \label{rmk:index-1}
It is not hard to see from this analysis that the simple factors of maximal-rank regular subalgebras almost always have index 1. As a result, we will often omit the index of these subalgebras for convenience. The exceptions to this rule are the subalgebras of rank 1 coming from a short root in $B_n$, $G_2$, or $F_4$ (which have index 2, 3, and 2, respectively), and the subalgebra of type $A_2$ coming from the short roots in $F_4$ (which has index 2). We will follow the convention of denoting these exceptions with a tilde (e.g.~$\tilde{A_2}$ denotes the aforementioned index-2 subalgebra of $F_4$). 
\end{remark}

\begin{example}
Consider the extended Dynkin diagram $\tilde{D}$ for $\mf{g} = E_6$ shown on the left (where the node corresponding to $\tilde{\alpha}$ is indicated with a triangle). Crossing out the node corresponding to $\alpha_2$, we obtain the diagram $D_{\alpha_2}$ shown on the right, which has type $A_5 \oplus A_1$. This corresponds to the semisimple algebra $\mf{g}(\alpha_2) = A_5 \oplus A_1$ of $E_6$ (where both $A_5$ and $A_1$ have index 1 in $E_6$). This is a maximal-rank regular subalgebra of $E_6$, which in fact turns out to be maximal regular as well. 

\setlength{\unitlength}{1.95cm}
\begin{picture}(3,2.0)(-2.5,-0.5)
  \multiput(-1.59,-.05)(.5,0){1}{\large $\triangledown$}
  \multiput(-1.5,.5)(.5,0){1}{\circle*{.10}}
  \multiput(-2.5,1.0)(.5,0){2}{\circle*{.10}}
  \multiput(-1.0,1.0)(.5,0){2}{\circle*{.10}}
  \multiput(-1.5,1.0)(.5,0){1}{\circle*{.10}}
  \multiput(-2.5, 1.0)(0,.5){1}{\line(1,0){1}}
  \multiput(-1.5,1.0)(0,.5){1}{\line(1,0){1}}
  \multiput(-1.5,0.1)(.5,0){1}{\line(0,1){.9}}
  \put(-1.4,.47){\tiny $\alpha_2$}
  \put(-1.55,1.15){\tiny $\alpha_4$}
  \put(-2.55,1.15){\tiny $\alpha_1$}
  \put(-2.07,1.15){\tiny $\alpha_3$}
  \put(-1.07,1.15){\tiny $\alpha_5$}
  \put(-0.57,1.15){\tiny $\alpha_6$}
\end{picture}
\setlength{\unitlength}{1.95cm}
\begin{picture}(3,2.0)(-2.2,-0.5)
  \multiput(-1.59,-.05)(.5,0){1}{\large $\triangledown$}
  \multiput(-1.58,.5)(.5,0){1}{\textcolor{red}{$\times$}}
  \multiput(-2.5,1.0)(.5,0){2}{\circle*{.10}}
  \multiput(-1.0,1.0)(.5,0){2}{\circle*{.10}}
  \multiput(-1.5,1.0)(.5,0){1}{\circle*{.10}}
  \multiput(-2.5, 1.0)(0,.5){1}{\line(1,0){1}}
  \multiput(-1.5,1.0)(0,.5){1}{\line(1,0){1}}
  \multiput(-1.5,0.1)(.5,0){1}{\line(0,1){.84}}
\end{picture}
\setlength{\unitlength}{1.95cm}
\begin{picture}(3,2.0)(-2,-0.5)
  \multiput(-1.5,0)(-.5,0){1}{\circle*{.10}}
  \multiput(-2.5,1.0)(.5,0){2}{\circle*{.10}}
  \multiput(-1.0,1.0)(.5,0){2}{\circle*{.10}}
  \multiput(-1.5,1.0)(.5,0){1}{\circle*{.10}}
  \multiput(-2.5, 1.0)(0,.5){1}{\line(1,0){1}}
  \multiput(-1.5,1.0)(0,.5){1}{\line(1,0){1}}
\end{picture}
\end{example}

\vspace{-2em}

Carrying out finitely many elementary operations like this, one can verify Table \ref{table:reg-max-rank}, which contains complete lists of the maximal-rank regular subalgebras of the classical simple and exceptional Lie algebras.     

\begin{table}[H]
\begin{center}
\begin{tabular}{| c | c l |} \hline
$\mf{g}$ & Subalgebra & \\ \hline
$B_n$ & $B_{m_0} \oplus D_{m_1} \oplus \cdots \oplus D_{m_r}$ & ($r \geq 1$, $m_1 \geq  \cdots \geq m_r >1$, $\sum_{i} m_i = n$) \\ 
$C_n$ & $C_{\ell_1} \oplus \cdots \oplus C_{\ell_r}$ & ($r \geq 2$, $\ell_1 \geq \cdots \geq \ell_r > 0$, $\sum_{i} \ell_i = n$) \\
$D_n$ & $D_{m_1} \oplus \cdots \oplus D_{m_r}$ & ($r \geq 2$, $m_1 \geq \cdots \geq m_r >1$, $\sum_{i} m_i = n$) \\ \hline
\end{tabular}

\vspace{.25cm}

\begin{tabular}{| c || c || c || c || c c c |} \hline
$G_2$ & $F_4$ & $E_6$ & $E_7$ & & $E_8$ & \\ \hline
$A_2$ & $B_4$ & $A_5 \oplus A_1$ & $D_6 \oplus A_1$ & $A_8$ & \arrvline & $E_7 \oplus A_1$ \\
$A_1 \oplus \tilde{A_1}$ & $A_3 \oplus \tilde{A_1}$ & $3A_2$ & $A_5^{''} \oplus A_2$ & $D_8$ & \arrvline & $D_6 \oplus 2A_1$ \\
 & $A_2 \oplus \tilde{A_2}$ & & $2A_3 \oplus A_1$ & $A_7^{'} \oplus A_1$ & \arrvline & $D_5 \oplus A_3$ \\
 & $C_3 \oplus A_1$ & & $A_7$ & $A_5 \oplus A_2 \oplus A_1$ & \arrvline & $2D_4$ \\
 & $D_4$ & & $D_4 \oplus 3A_1$ & $2A_4$ & \arrvline & $D_4 \oplus 4A_1$ \\
 & $B_2 \oplus 2A_1$ & & $7A_1$ & $4A_2$ & \arrvline & $2A_3 \oplus 2A_1$ \\
 & $4A_1$ & & & $E_6 \oplus A_2$ & \arrvline & $8A_1$ \\ \hline
\end{tabular}
\end{center}
\caption{Maximal-rank regular subalgebras of the simple Lie algebras (cf.~\cite[Tables 9, 11]{dynkin}). }
\label{table:reg-max-rank}
\end{table}

Note that there are two conjugacy classes of subalgebras with index 1 and type $A_5$ in $E_7$, as well as two conjugacy classes of subalgebras with index 1 and type $A_7$ in $E_8$. However, there is a \textit{unique} conjugacy class in $E_7$ with type $A_5 \oplus A_2$ and with both factors having index 1. Similarly, there is a \textit{unique} conjugacy class in $E_8$ with type $A_7 \oplus A_1$ and with both factors having index 1. In these cases, we still include ``primes" to clarify which conjugacy classes $A_5$ and $A_7$ belong to: $A_5^{''} \oplus A_2$ and $A_7^{'} \oplus A_1$ \cite[Table 25]{dynkin}.

Note also that for certain values of $n$, $m_i$, and $\ell_i$ as above, there are multiple conjugacy classes of subalgebras in $B_n$, $C_n$, or $D_n$ with index 1 and type $B_{m_0}$, $D_{m_i}$, or $C_{\ell_i}$. However, as in the exceptional case, the conjugacy class of each maximal-rank regular subalgebra is uniquely determined by type and index \cite[No.~17]{dynkin}. While ``primes" on simple factors are omitted from the classical portion of Table \ref{table:reg-max-rank}, we include them when describing the dual pairs in the classical simple Lie algebras of low rank in Section \ref{sec:lists-classical}.

\subsection{Maximal regular subalgebras of maximal rank}

Some of the maximal-rank regular subalgebras shown in Table \ref{table:reg-max-rank} are in fact maximal regular as well. By \cite[No.~17]{dynkin}, the regular subalgebras that are both maximal regular \textit{and} maximal-rank are all obtained by means of only one elementary operation. Moreover, all of the subalgebras obtained by a single elementary operation are maximal-rank regular subalgebras, and most of them are maximal. (Note that Dynkin in \cite[Theorem 5.5]{dynkin} mistakenly suggests that all maximal-rank regular subalgebras that are obtained by a single elementary operation are maximal. This seemingly led Dynkin to mistakenly include several non-maximal subalgebras in \cite[Table 12]{dynkin}; in particular, he mistakenly includes $A_3 \oplus \tilde{A_1}$ as maximal in $F_4$, includes $2A_3 \oplus A_1$ as maximal in $E_7$, and includes $A_7 \oplus A_1$, $A_5 \oplus A_2 \oplus A_1$, and $A_5 \oplus A_3$ as maximal in $E_8$.)\footnote{These errors in \cite[Table 12]{dynkin} were also noted in \cite[Section 3]{Tits}.} Fortunately, there is a straightforward way to check whether a maximal-rank regular subalgebra that is obtained via a single elementary operation is maximal regular as well:

\begin{lemma} \label{lemma:elementary-ops}
Let $\mf{g}$ be a simple Lie algebra, $\mf{h}$ a Cartan subalgebra of $\mf{g}$, and $\{ \alpha_1, \ldots , \alpha_n \}$ the set of simple roots corresponding to $(\mf{g}, \mf{h})$. Consider the extended Dynkin diagram of $\mf{g}$ with the node corresponding to $\alpha_k$ labeled with the coefficient $m_{k}$ of $\alpha_k$ in the highest root $\delta$, and the node corresponding to the lowest root $\tilde{\alpha}$ labeled 1. The regular subalgebras of $\mf{g}$ that are both maximal regular and maximal-rank are exactly the subalgebras $\mf{g}(\alpha_k)$ with $m_k$ prime. 
\end{lemma}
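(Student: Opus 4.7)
The plan is to attach to each maximal-rank regular subalgebra an invariant — the index of its root lattice in $Q(\mathfrak{g})$ — show this invariant is strictly multiplicative along chains, and then run both directions of the equivalence through arithmetic. The starting observation is the affine relation $\tilde{\alpha} + \sum_i m_i \alpha_i = 0$, which expresses $\alpha_k$ with denominator $m_k$ in terms of $\{\tilde{\alpha}\} \cup \{\alpha_i : i \neq k\}$; hence $Q(\mathfrak{g}(\alpha_k))$, the $\mathbb{Z}$-span of that set, sits inside $Q(\mathfrak{g})$ with index exactly $m_k$. Since every nontrivial elementary operation removes a node of mark $\geq 2$ and contracts the root lattice by that factor, this index is strictly multiplicative along any chain of maximal-rank regular subalgebras sharing a Cartan.

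For the forward direction, suppose $m_k$ is prime but there exists a regular semisimple $\mathfrak{g}^{\ast}$ with $\mathfrak{g}(\alpha_k) \subsetneq \mathfrak{g}^{\ast} \subsetneq \mathfrak{g}$. Since $\mathfrak{g}(\alpha_k)$ has maximal rank, so does $\mathfrak{g}^{\ast}$; after conjugating by the adjoint group, we may arrange that all three algebras share the Cartan $\mathfrak{h}$. By the result of Dynkin cited in the excerpt, both inclusions then arise from sequences of elementary operations, and each elementary operation contributes a factor $\geq 2$ to the root-lattice index, so
$$m_k = [Q(\mathfrak{g}) : Q(\mathfrak{g}^{\ast})] \cdot [Q(\mathfrak{g}^{\ast}) : Q(\mathfrak{g}(\alpha_k))]$$
is a proper factorization of $m_k$, contradicting primality.

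For the converse, if $m_k = pq$ with $p, q \geq 2$, I would produce the required intermediate $\mathfrak{g}^{\ast}$ by a short case inspection of the finitely many extended Dynkin diagrams. Each composite mark admits an explicit two-step realization: first an elementary operation on $\mathfrak{g}$ producing a maximal-rank regular subalgebra $\mathfrak{g}^{\ast}$ strictly containing $\mathfrak{g}(\alpha_k)$, and then a further operation on a simple factor of $\mathfrak{g}^{\ast}$ yielding $\mathfrak{g}(\alpha_k)$. For instance, in $E_8$ the composite marks are $4$ and $6$, and $\mathfrak{g}(\alpha_k)$ in each case can be routed through intermediates such as $D_8$, $E_7 \oplus A_1$, or $E_6 \oplus A_2$ via a mark-$2$ or mark-$3$ first operation followed by an appropriate second operation on the resulting subalgebra.

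The hard part will be the converse: the prime direction is essentially automatic once the lattice-index identity is in hand, whereas actually exhibiting the intermediate subalgebras in the composite case genuinely requires a diagrammatic verification. A uniform statement is available in principle — every composite mark can be realized as the product of the marks removed in an iterated Borel--de Siebenthal sequence, because the extended Dynkin diagram of the depth-one subalgebra always contains the residual mark — but the cleanest verification is a finite check through the five exceptional types, with the classical series handled in parallel using Table \ref{table:reg-max-rank}.
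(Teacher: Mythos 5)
Your argument is correct, and it is a genuinely different proof from the paper's --- in fact it is almost the mirror image of it. The paper makes the same initial reduction you do (citing Dynkin's Borel--de Siebenthal result to restrict to single elementary operations), but then realizes $\mf{g}(\alpha_k)$ as the centralizer $\mf{z}_{\mf{g}}(t_k)$ of the torsion element $t_k = \exp\left( \frac{2\pi i}{m_k} \xi_k \right)$: for $m_k$ prime, maximality comes from irreducibility of the nonzero levels of the resulting $(\mathbb{Z}/m_k\mathbb{Z})$-grading --- a representation-theoretic claim the paper itself only verifies case by case --- while for $m_k$ composite the intermediate subalgebra is produced \emph{uniformly} as $\mf{z}_{\mf{g}}(t_k) \subsetneq \mf{z}_{\mf{g}}(t_k^p) \subsetneq \mf{g}$. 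Your lattice-index invariant does the opposite: the computation $[Q(\mf{g}) : Q(\mf{g}(\alpha_k))] = m_k$ (correct, since the affine relation gives $Q(\mf{g}(\alpha_k)) = \mathbb{Z}\langle m_k\alpha_k, \alpha_j \, (j \neq k)\rangle$) together with tower multiplicativity makes the \emph{prime} direction uniform and purely arithmetic, at the cost of turning the \emph{composite} direction into a finite diagram check. Your prime direction is watertight, with the hidden hypotheses all harmless: $\mf{g}(\alpha_k)$ already contains $\mf{h}$, hence so does $\mf{g}^*$, so no conjugation is needed to share a Cartan; the conjugations implicit in Dynkin's theorem preserve the lattice index; and a sequence of elementary operations yielding a \emph{proper} subalgebra must contain a nontrivial (mark $\geq 2$) step, since mark-$1$ removals return the ambient factor. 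Each approach buys what the other defers: the paper's composite direction needs no inspection, yours does, but your inspection is lighter than you suggest --- classical types have all marks $\leq 2$, so your remark about handling the classical series ``in parallel'' is vacuous, and the only composite marks are one $4$ in $F_4$, one $4$ in $E_7$, and $4, 6, 4$ in $E_8$. The required intermediates are containments the paper records anyway (e.g.~$A_3 \oplus \tilde{A_1} \subset B_4$ in $F_4$, $A_1 \oplus A_3 \oplus A_3 \subset A_1 \oplus D_6$ in $E_7$, and your proposed routes through $D_8$, $E_7 \oplus A_1$, and $E_6 \oplus A_2$ in $E_8$ all work), so completing your proof is a strictly smaller finite verification than the irreducibility check the paper's own proof leaves to the reader.
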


\begin{proof}
By \cite[No.~17]{dynkin}, the maximal regular subalgebras of maximal rank are all obtained by means of a single elementary operation, and any subalgebra obtained by elementary operations is a maximal-rank regular subalgebra. Therefore, it remains to show that for a simple root $\alpha_k$, $\mf{g}(\alpha_k)$ is maximal if and only if $m_{k}$ is prime.

To this end, fix $\alpha_k$ and let $\xi_{k}$ denote the corresponding fundamental coweight (so that $\xi_{k}(\alpha_k) = 1$ and $\xi_{k}(\alpha_j) = 0$ for $j \neq k$). Set 
$$t_{k} := \exp \left ( \frac{2 \pi i}{m_{k}} \xi_{k} \right ).$$
Then $t_{k}$ acts on $\mf{g}$ via the adjoint action, and hence defines an automorphism of $\mf{g}$ satisfying $t_{k}^{m_{k}} = 1$. We claim that $\mf{z}_{\mf{g}} (t_{k}) = \mf{g}(\alpha_k)$. It suffices to show that $\{ \tilde{\alpha} \} \cup \{  \alpha_j \}_{j \neq k}$ is the system of simple roots corresponding to $(\mf{z}_{\mf{g}}(t_{k}), \mf{h} )$.  

To start, note that $\alpha_j (t_{k}) = 1$ for any $j \neq k$. Additionally, note that $\alpha_k (t_{k}) = e^{ \frac{2 \pi i}{m_{k}} }$, which gives that $\tilde{\alpha}(t_{k}) = \alpha_k (t_{k})^{m_{k}} = e^{2\pi i} = 1$. From this, it's clear that $\mf{g}^{\pm \tilde{\alpha}}$ and $\mf{g}^{\pm \alpha_j}$ ($j \neq k$) are contained in $\mf{z}_{\mf{g}}(t_k)$. For the other direction, suppose we have some $\beta = \sum_{j=1}^{n} c_j \alpha_j$ in $\mf{z}_{\mf{g}}(t_k)$ (where $-m_j \leq c_j \leq m_j$ and where either $c_j \leq 0$ for all $j$ or $c_j \geq 0$ for all $j$). Then 
$$\beta (t_k) = e^{ \frac{2 \pi i c_k}{m_k} } = 1,$$
meaning $\frac{c_k}{m_k} \in \mathbb{Z}$. Since $-m_k \leq c_k \leq m_k$, we see that $c_k \in \{ -m_k, 0, m_k \}$. If $c_k = m_k$, then $c_j \geq 0$ for all $j$ and
$$\beta = \sum_j c_j \alpha_j = - \tilde{\alpha} - \sum_{j \neq k} (m_j - c_j) \alpha_j $$
is a non-positive linear combination of our proposed set of simple roots. Similarly, if $c_k = -m_{k}$, then $c_j \leq 0$ for all $j$ and
$$\beta = \sum_j c_j \alpha_j = \tilde{\alpha} + \sum_{j \neq k} (m_j + c_j) \alpha_j$$
is a non-negative linear combination of our proposed set of simple roots. Finally, if $c_k = 0$, there is nothing to show. It follows that $\mf{z}_{\mf{g}} (t_k) = \mf{g}(\alpha_k)$, as desired.

Now, suppose that $m_k$ is prime. We would like to show that $\mf{z}_{\mf{g}}(t_k) = \mf{g}(\alpha_k)$ is a maximal regular subalgebra. To this end, note that $t_k$ induces a $(\mathbb{Z}/m_k\mathbb{Z})$-grading on $\mf{g}$, where if $\beta (t_k) = e^{\frac{2\pi i r}{m_k}}$ (with $0 \leq r \leq m_k-1$), then $\mf{g}^{\beta}$ is in the $r$-th level of the grading. Let's call these levels $\mf{g}_0, \ldots , \mf{g}_{m_k-1}$. It's clear that $\mf{g}_0 = \mf{z}_{\mf{g}}(t_k) = \mf{g}(\alpha_k)$, and hence that each level is a module under the adjoint action of $\mf{g}(\alpha_k)$. Suppose we can show that each level is irreducible under this action. Then any subalgebra $\mf{r}$ of $\mf{g}$ properly containing $\mf{g}_0$ can be written as a sum of levels. Moreover, by the irreducibility of the levels, $[\mf{g}_i, \mf{g}_j] = \mf{g}_{i+j \pmod{m_k}}$. Since $m_k$ is prime, closure under Lie bracket implies that $\mf{r} = \mf{g}$. 

Thus, in the case of $m_k$ prime, we have reduced to showing that the levels of the $(\mathbb{Z}/m_k\mathbb{Z})$-grading on $\mf{g}$ induced by $t_k$ are irreducible as $\mf{g}(\alpha_k)$-modules. This is most straightforwardly proven by considering each case individually; we will not write out all of these cases, but instead will include an illustrative example. To this end, consider $\mf{g}(\alpha_5)$ in the case where $\mf{g}$ is of type $E_8$ (where $\alpha_5$ is as in Figure \ref{fig:root-labelings}). In this case, the extended Dynkin diagram with labels as described in the lemma statement is as follows:

\begin{center}
\begin{tikzcd}[row sep = small, column sep = small]
2 \arrow[dash]{r} & 4 \arrow[dash]{r} & 6 \arrow[dash]{r} \arrow[dash]{d} & 5 \arrow[dash]{r} & 4 \arrow[dash]{r} & 3 \arrow[dash]{r} & 2 \arrow[dash]{r} & 1 \\
 & & 3
\end{tikzcd}
\end{center}
From this, we see that $m_5 = 5$ and that the root vector $X_{\alpha_5}$ is the lowest weight for the irreducible representation $\wedge^2 \mathbb{C}^5 \otimes \mathbb{C}^5$ of $\mf{g}(\alpha_5)$. It follows that $\mf{g}_1$ has dimension at least $\binom{5}{2} \cdot 5 = 50$. Now, since $t_5$ represents the unique conjugacy class of elements of order 5 with centralizer $\mf{g}(\alpha_5)$ -- and since $t_5^2$, $t_5^3$, and $t_5^4$ have the same property -- we see that $t_5$, $t_5^2$, $t_5^3$, and $t_5^4$ are all conjugate. It follows that $\mf{g}_2$, $\mf{g}_3$, and $\mf{g}_4$ also have irreducible components of dimension at least 50. Additionally, $\mf{g}_0 = \mf{g}(\alpha_5) = \mf{sl}_5 \oplus \mf{sl}_5$ has dimension 48. Since $50 \cdot 4 + 48 = 248 = \dim \mf{g}$, this shows that these containments are in fact equalities, and hence that $\mf{g}_1$, $\mf{g}_2$, $\mf{g}_3$, and $\mf{g}_4$ are irreducible, as desired.

Finally, suppose that $m_k$ is not prime, and that $p \mid m_k$. It is clear that $\mf{g}(\alpha_k) = \mf{z}_{\mf{g}} (t_k) \subseteq \mf{z}_{\mf{g}} (t_k^p)$. Moreover, taking $\frac{m_k}{p}$ as the coefficient of $\alpha_k$, it is not difficult to construct roots of $\mf{z}_{\mf{g}}(t_k^p)$ that cannot be written as non-positive or non-negative linear combinations of $\{ \tilde{\alpha} \} \cup \{ \alpha_j \}_{j\neq k}$. It follows that $\mf{z}_{\mf{g}} (t_k) \subsetneq \mf{z}_{\mf{g}} (t_k^p)$. Additionally, $\mf{z}_{\mf{g}}(t_k^p)$ is clearly regular. Therefore, $\mf{g}(\alpha_k)$ is not a maximal regular subalgebra in this case. 
\end{proof}

\begin{example}
For example, here is the extended diagram for $E_7$, which has highest root $\delta = 2 \alpha_1 + 2 \alpha_2 + 3 \alpha_3 + 4 \alpha_4 + 3 \alpha_5 + 2 \alpha_6 + \alpha_7$:
\begin{center}
\begin{tikzcd}[row sep = small, column sep = small]
1 \arrow[dash]{r} & 2 \arrow[dash]{r} & 3 \arrow[dash]{r} & 4 \arrow[dash]{r} \arrow[dash]{d} & 3 \arrow[dash]{r} & 2 \arrow[dash]{r} & 1 \\
 & & & 2
\end{tikzcd}
\end{center}

In this case, $\mf{g}(\alpha_1) = \mf{g}(\alpha_6) = A_1 \oplus D_6$, which is maximal by Lemma \ref{lemma:elementary-ops} (since the relevant elementary operation involves removing a node with prime label 2). Similarly, Lemma \ref{lemma:elementary-ops} gives that $\mf{g}(\alpha_3) = \mf{g}(\alpha_5) = A_2 \oplus A_5^{''}$ is maximal. Removing either node with label 1, we get $\mf{g}(\tilde{\alpha}) = \mf{g}(\alpha_7) = E_7$, which is not a maximal regular subalgebra (since maximal regular subalgebras are defined to be proper). Finally, removing the node with label 4, we get $\mf{g}(\alpha_4) = A_1 \oplus A_3 \oplus A_3$, which Lemma \ref{lemma:elementary-ops} says is \textit{not} maximal regular. Indeed, since $A_3 \simeq D_3$, we see from Table \ref{table:maximal-regular} that $A_1 \oplus A_3 \oplus A_3 \simeq A_1 \oplus D_3 \oplus D_3 \subset A_1 \oplus D_6$. 
\end{example}

Computing $\mf{g}(\alpha_k)$ for simple $\mf{g}$ and simple roots $\alpha_k$ with prime label, we obtain the list of maximal regular subalgebras of maximal rank shown in Table \ref{table:maximal-regular}.

\begin{table}
\begin{tabular}{| c | l || c | l |} \hline
$\mf{g}$ & \hspace{1cm} Subalgebra & $\mf{g}$ & \hspace{2.3cm} Subalgebra \\ \hline
$B_n$ & $\mf{g} (\alpha_k)=D_k \oplus B_{n-k}$ & $E_6$ & $\mf{g}(\alpha_2) \simeq \mf{g}(\alpha_3) \simeq \mf{g}(\alpha_5) = A_1 \oplus A_5$ \\
 & ($k=2,3,\ldots,n$) & & $\mf{g}(\alpha_4) = 3A_2$ \\ \hline
$C_n$ & $\mf{g} (\alpha_k)=C_k \oplus C_{n-k}$ & $E_7$ & $\mf{g}(\alpha_1) \simeq \mf{g}(\alpha_6) = A_1 \oplus D_6$ \\ 
 & ($k=1,2,\ldots,n$) & & $\mf{g}(\alpha_2) = A_7$ \\ \cline{1-2}
$D_n$ & $\mf{g}(\alpha_k) = D_k \oplus D_{n-k} $ & & $\mf{g}(\alpha_3) \simeq \mf{g}(\alpha_5) = A_2 \oplus A_5^{''}$ \\  \cline{3-4}
 & ($k=2,3,\ldots,n-2$) & $E_8$ & $\mf{g}(\alpha_1) = D_8$ \\ \cline{1-2}
$F_4$ & $\mf{g}(\alpha_1) = A_1 \oplus C_3$ & & $\mf{g}(\alpha_2) = A_8$ \\ 
 & $\mf{g}(\alpha_2) = A_2 \oplus \tilde{A_2}$ & & $\mf{g}(\alpha_5) = 2A_4$ \\ 
 & $\mf{g}(\alpha_4) = B_4$ & & $\mf{g}(\alpha_7) = A_2 \oplus E_6$ \\ \cline{1-2}
$G_2$ & $\mf{g}(\alpha_1) = A_1 \oplus \tilde{A_1}$ & & $\mf{g}(\alpha_8) = A_1 \oplus E_7$ \\ 
  & $\mf{g}(\alpha_2) = A_2$ &  &  \\ \hline
\end{tabular}
\caption{Maximal regular subalgebras of maximal rank in simple Lie algebras (cf.~\cite[Table 12]{dynkin}). (In this table, the isomorphic subalgebras -- e.g.~$\mf{g}(\alpha_1) \simeq \mf{g}(\alpha_6)$ in $E_7$ -- are not only isomorphic but conjugate by an inner automorphism.)  } 
\label{table:maximal-regular}
\end{table}

\subsection{Dual pairs from maximal regular subalgebras of maximal rank}

It turns out that maximal-rank maximal regular subalgebras lead to an important class of dual pairs.  

\begin{prop}[{\cite[Proposition 5.15]{rubenthaler}}] \label{prop:maximal-reg-is-dp}
Let $\mf{g}$ be a simple Lie algebra. Let $\mf{g}_1$ and $\mf{g}_2$ be two semisimple subalgebras of $\mf{g}$ such that $\mf{g}_1 \oplus \mf{g}_2$ is a maximal regular subalgebra of maximal rank in $\mf{g}$. Then $(\mf{g}_1, \mf{g}_2 )$ is a dual pair in $\mf{g}$. 
\end{prop}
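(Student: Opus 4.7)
The plan is to establish $\mf{z}_{\mf{g}}(\mf{g}_1) = \mf{g}_2$; the other equality $\mf{z}_{\mf{g}}(\mf{g}_2) = \mf{g}_1$ then follows by symmetry. Set $\mf{a} := \mf{z}_{\mf{g}}(\mf{g}_1)$. The containment $\mf{g}_2 \subseteq \mf{a}$ is immediate because $\mf{g}_1$ and $\mf{g}_2$ commute, so the task reduces to upgrading this to an equality. I first fix Cartans $\mf{h}_i \subseteq \mf{g}_i$ so that $\mf{h} := \mf{h}_1 \oplus \mf{h}_2$ is a Cartan of $\mf{g}_1 \oplus \mf{g}_2$, which by the maximal-rank hypothesis is also a Cartan of $\mf{g}$.

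Next I would show that $\mf{a}$ is semisimple. It is reductive by the standard fact that the centralizer of a semisimple subalgebra in a reductive Lie algebra is reductive (use complete reducibility of the adjoint action of $\mf{g}_1$ on $\mf{g}$). Any element of $Z(\mf{a})$ centralizes $\mf{g}_1$ (by definition of $\mf{a}$) and centralizes $\mf{g}_2 \subseteq \mf{a}$ (by centrality), so it lies in $\mf{z}_{\mf{g}}(\mf{g}_1 \oplus \mf{g}_2)$. But the roots of the maximal-rank semisimple subalgebra $\mf{g}_1 \oplus \mf{g}_2$ span $\mf{h}^*$, which forces $\mf{z}_{\mf{g}}(\mf{g}_1 \oplus \mf{g}_2) = 0$. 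Hence $Z(\mf{a}) = 0$ and $\mf{a}$ is semisimple.

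The crux is then to study $\mf{g}_1 \oplus \mf{a}$. This is a genuine direct sum: $\mf{g}_1 \cap \mf{a} = Z(\mf{g}_1) = 0$ by semisimplicity of $\mf{g}_1$, and the two factors commute, so it is a subalgebra. It is semisimple (direct sum of semisimples), and I claim it is also regular. Since $\mf{g}_1$ is $\mathrm{ad}_{\mf{g}}\mf{h}$-stable, so is $\mf{a}$: for $H \in \mf{h}$, $X \in \mf{a}$, and $Y \in \mf{g}_1$, the Jacobi identity gives $[Y, [H, X]] = [[Y, H], X] = 0$, since $[Y, H] \in \mf{g}_1$ and $X$ centralizes $\mf{g}_1$. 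Therefore $\mf{g}_1 \oplus \mf{a}$ is a regular semisimple subalgebra of $\mf{g}$ containing $\mf{g}_1 \oplus \mf{g}_2$.

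Maximality of $\mf{g}_1 \oplus \mf{g}_2$ then forces $\mf{g}_1 \oplus \mf{a} = \mf{g}_1 \oplus \mf{g}_2$ or $\mf{g}_1 \oplus \mf{a} = \mf{g}$. In the second case $\mf{g}$ decomposes as a direct sum of the two nonzero commuting ideals $\mf{g}_1$ and $\mf{a}$ (note $\mf{a} \supseteq \mf{g}_2 \neq 0$), contradicting the simplicity of $\mf{g}$. Hence $\mf{a} = \mf{g}_2$, as desired. The main obstacle I anticipate is ensuring that the identification $\mf{z}_{\mf{g}}(\mf{g}_1 \oplus \mf{g}_2) = 0$ is set up cleanly — this is the precise place where the maximal-rank hypothesis enters — together with the verification that $\mf{g}_1 \oplus \mf{a}$ really is regular semisimple so that the maximality assumption can be invoked; everything else is bookkeeping.
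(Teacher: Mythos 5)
Your proof is correct. One point of comparison worth noting up front: the paper does not actually prove this proposition --- it is quoted from Rubenthaler (with the hypothesis ``maximal-rank'' restored, which his original statement omitted), and the paper only supplies counterexamples showing both adjectives are needed. So your argument stands as a self-contained proof rather than a replication of one in the text, and it is a good one: setting $\mf{a} = \mf{z}_{\mf{g}}(\mf{g}_1)$, you use maximal rank exactly where it must be used --- the Cartan $\mf{h} = \mf{h}_1 \oplus \mf{h}_2$ of $\mf{g}_1 \oplus \mf{g}_2$ is a Cartan of all of $\mf{g}$, forcing $\mf{z}_{\mf{g}}(\mf{g}_1 \oplus \mf{g}_2) = 0$ (any such centralizing element lies in $\mf{z}_{\mf{g}}(\mf{h}) = \mf{h}$ and is killed by all roots of $\mf{g}_1 \oplus \mf{g}_2$, which span $\mf{h}^*$), hence $Z(\mf{a}) = 0$ and $\mf{a}$ is semisimple. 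This semisimplicity is not a cosmetic step: the paper's definition of maximal regular subalgebra only forbids intermediate \emph{semisimple} regular subalgebras, so without it the maximality hypothesis could not be invoked on $\mf{g}_1 \oplus \mf{a}$. Your regularity check for $\mf{g}_1 \oplus \mf{a}$ (the Jacobi computation, where the omitted term $[H,[Y,X]]$ vanishes because $[Y,X]=0$) is fine, as is the final dichotomy, with simplicity of $\mf{g}$ excluding $\mf{g}_1 \oplus \mf{a} = \mf{g}$. Pleasingly, your structure explains the paper's counterexamples: for $(A_k, A_{n-1-k})$ in $A_n$ (maximal but not maximal-rank) the centralizer $\mf{a}$ acquires a central torus and your semisimplicity step fails, while for $(A_1, 2A_3)$ in $E_7$ (maximal-rank but not maximal) the dichotomy step fails since $\mf{g}_1 \oplus \mf{a} = A_1 \oplus D_6$ is a strictly larger regular semisimple subalgebra. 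The only items you leave implicit are standard: that elements semisimple in $\mf{g}_1 \oplus \mf{g}_2$ remain $\mathrm{ad}$-semisimple in $\mf{g}$ (so that $\mf{h}$, a toral subalgebra of full rank, really is a Cartan of $\mf{g}$), and that the semisimple subalgebras $\mf{g}_1, \mf{g}_2$ are automatically reductive in $\mf{g}$, which is the remaining clause in the definition of a reductive dual pair.
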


Note that the adjectives ``maximal-rank" and ``maximal" are both required here, although ``maximal-rank" was mistakenly omitted from the statement of \cite[Proposition 5.15]{rubenthaler}. The original statement of \cite[Proposition 5.15]{rubenthaler} would imply, for example, that there are dual pairs of the form $(A_k, A_{n-1-k})$ in $A_n$ (for $k=1,\ldots, n-2$) coming from the maximal regular subalgebras $A_k \oplus A_{n-1-k}$ of rank $n-1$ in $A_n$\footnote{These maximal regular subalgebras (along with other maximal regular subalgebras of non-maximal rank) are mistakenly omitted from \cite[Table 12]{dynkin}.}; however, it is not hard to check that these fail to be dual pairs. Similarly, removing the ``maximal" adjective would imply, for example, that $(A_1, 2A_3)$ is a dual pair in $E_7$. However, from Table \ref{table:maximal-regular}, it's clear that $\mf{z}_{E_7}(A_1) \supseteq D_6 \supsetneq 2A_3$. (In fact, Proposition \ref{prop:maximal-reg-is-dp} implies that $\mf{z}_{E_7}(A_1) = D_6$.) Therefore, by consulting Table \ref{table:maximal-regular}, we see that Proposition \ref{prop:maximal-reg-is-dp} gives us the dual pairs shown in Table \ref{table:max-reg-dps}.

\begin{table}
\begin{tabular}{| c | c |} \hline
$\mf{g}$ & Dual Pair \\ \hline 
$B_n$ & $(D_k, B_{n-k})$: $k=2,3,\ldots, n-1$ \\
 & $(A_1, A_1 \oplus B_{n-2})$ \\ \hline
$C_n$ & $(C_k, C_{n-k})$: $k=1,2, \ldots , n-1$ \\ \hline 
$D_n$ & $(D_k, D_{n-k})$: $k=2,3, \ldots , n-2$ \\
 & $(A_1, A_1 \oplus D_{n-2})$ \\ \hline
\end{tabular}

\vspace{.25cm}

\begin{tabular}{| c || c || c || c || c |} \hline
$G_2$ & $F_4$ & $E_6$ & $E_7$ & $E_8$ \\ \hline
$(A_1, \tilde{A_1})$ & $(A_1, C_3)$ & $(A_1, A_5)$ & $(A_1, D_6)$ & $(A_4, A_4)$ \\
 & $(A_2, \tilde{A_2})$ & $(A_2, 2A_2)$ & $(A_2, A_5^{''})$ & $(A_2, E_6)$ \\
 & & & & $(A_1, E_7)$ \\ \hline
\end{tabular}
\caption{Dual pairs coming from maximal regular subalgebras of maximal rank in simple Lie algebras.}
\label{table:max-reg-dps}
\end{table}

Even though the adjective ``maximal" is required for Proposition \ref{prop:maximal-reg-is-dp} to be correct, it \textit{can} happen that non-maximal maximal-rank regular subalgebras can lead to dual pairs. For example, we will later see that $(B_2, 2A_1)$ is a dual pair in $F_4$, that $(D_4, 3A_1)$ us a dual pair in $E_7$, and that $(A_5, A_2 \oplus A_1)$, $(D_6, 2A_1)$, $(D_5, A_3)$, $(D_4, D_4)$, and $(D_4 \oplus A_1, 3A_1)$ are dual pairs in $E_8$ (see Tables \ref{table:F4}, \ref{table:E7}, and \ref{table:E8}). We will later see that these dual pairs (along with the ones in Table \ref{table:max-reg-dps}) are examples of \textit{non-$S$-irreducible} dual pairs.  

Maximal-rank regular subalgebras will continue to play a crucial role in the classification of dual pairs throughout the remainder of the paper. In particular, it will turn out that a dual pair in a simple Lie algebra $\mf{g}$ is either \textit{$S$-irreducible} in $\mf{g}$ or \textit{$S$-irreducible} in a maximal-rank regular subalgebra of $\mf{g}$ (see Theorem \ref{thm:non-S-irred}).

\section{Admissible subalgebras} \label{sec:admissible} 

In this section, we introduce the notion of \textit{admissibility}, which will play a crucial role in the classification of \textit{$S$-irreducible} dual pairs. To this end, let $\mf{g}$ be a simple Lie algebra over $\mathbb{C}$, and let $\mf{h}$ be a Cartan subalgebra of $\mf{g}$. Let $\mathcal{R}$ denote the root system of $(\mf{g}, \mf{h})$. Let 
$$\Psi = \{ \alpha_1, \ldots , \alpha_n \}$$
be a basis (i.e.~set of simple roots) for $\mathcal{R}$. 

Let $\theta$ denote some subset of $\Psi$. Let $D$ denote the diagram obtained from the Dynkin diagram of $\Psi$ by circling the nodes corresponding to elements of $\Psi \setminus \theta$. We will say that $D$ is the \textbf{diagram associated with $(\Psi , \theta)$}. For each circled node $\gamma_i \in \Psi \setminus \theta$, let $D_i$ denote the maximal connected subdiagram of $D$ whose unique circled node is $\gamma_i$. Each $D_i$ is called an \textbf{irreducible component} of $D$. For example, consider the $D_9$ Dynkin diagram with $\Psi \setminus \theta = \{ \alpha_2, \alpha_5 \}$:

\vspace{.5cm}

\begin{center}
\right.
  \]

The following theorem shows that for each irreducible component $D_i$ of the diagram $D$ associated with $(\Psi, \theta)$, the condition of appearing in Figure \ref{fig:table1} is precisely the same as the existence of a certain $\mf{sl}_2$-triplet.

\begin{thm}[{\cite[Theorem 2.3]{rubenthaler}}]
Suppose $\text{Card}(\Psi \setminus \theta) = 1$. Then the diagram $D$ associated with $(\Psi, \theta)$ appears in Figure \ref{fig:table1} if and only if $H_{\theta}$ is a defining vector (i.e.~if and only if there exist $X_{\theta}, Y_{\theta} \in \mf{g}$ such that $(X_{\theta},H_{\theta}, Y_{\theta})$ is an $\mf{sl}_2$-triplet).
\end{thm}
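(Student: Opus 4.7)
The plan is to convert the existence of an $\mf{sl}_2$-triplet with middle element $H_\theta$ into a regularity condition on the parabolic prehomogeneous vector space $(L_\theta, d_1(\theta))$, and then to match this against Figure \ref{fig:table1} by a case analysis over one-node-circled Dynkin diagrams.

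Since $|\Psi \setminus \theta| = 1$, let $\gamma$ denote the unique element of $\Psi \setminus \theta$. Then $\alpha(H_\theta) \in \{0, 2\}$ for all $\alpha \in \Psi$, and the induced grading $\mf{g} = \bigoplus_p d_p(\theta)$ has $d_p(\theta)$ spanned by the root spaces $\mf{g}^\alpha$ for roots $\alpha$ whose $\gamma$-coefficient equals $p$ (plus $\mf{h}$ inside $d_0$). The Levi $\ell_\theta = d_0(\theta)$ is reductive with $1$-dimensional center $\mathbb{C} H_\theta$ and integrates to a connected reductive subgroup $L_\theta$ acting on each $d_p(\theta)$. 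Any $\mf{sl}_2$-triplet with middle element $H_\theta$ must have $X_\theta \in d_1(\theta)$ and $Y_\theta \in d_{-1}(\theta)$, so the triplet condition reduces to finding such $X_\theta, Y_\theta$ with $[X_\theta, Y_\theta] = H_\theta$.

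The key reformulation is that $H_\theta$ is a defining vector if and only if $H_\theta \in [d_1(\theta), d_{-1}(\theta)]$. Using ad-invariance of the Killing form $B$ (which pairs $d_p$ with $d_{-p}$ non-degenerately), one checks that for $X \in d_1(\theta)$, $\mathrm{ad}(X)(d_{-1}(\theta)) = \mathrm{Stab}_{\ell_\theta}(X)^\perp$ inside $\ell_\theta$. Thus $H_\theta$ is a defining vector iff there exists $X \in d_1(\theta)$ with $B(H_\theta, \mathrm{Stab}_{\ell_\theta}(X)) = 0$. This matches exactly Rubenthaler's regularity condition on the parabolic PV $(L_\theta, d_1(\theta))$ developed in \cite[Section 2]{rubenthaler}: the forward direction is immediate since any triplet gives $X_\theta$ with $[\ell_\theta, X_\theta] = d_1(\theta)$ by $\mf{sl}_2$-representation theory (so the $L_\theta$-orbit of $X_\theta$ is open), and the reverse comes from regularity supplying the required $Y_\theta \in d_{-1}(\theta)$.

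The remaining step, and the main obstacle, is the case-by-case verification that $(L_\theta, d_1(\theta))$ is regular precisely when the Dynkin diagram of $\mf{g}$ with $\gamma$ circled appears in Figure \ref{fig:table1}. For each admissible case, one identifies $\ell_\theta$ and $d_1(\theta)$ explicitly and exhibits both a relative invariant and a generic $X_\theta$ (e.g.\ in case $(1)$, $\mf{g} = A_{2n+1}$ with $\gamma = \alpha_{n+1}$: $\ell_\theta \simeq \mf{gl}_{n+1} \oplus \mf{gl}_{n+1}$, $d_1(\theta) \simeq \mathrm{Mat}_{n+1}$, relative invariant $\det$, generic $X_\theta = I$; then $Y_\theta$ is read off from the pairing $[X_\theta, \cdot]$). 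For non-admissible one-node-circled diagrams, one either verifies the dimension obstruction $\dim d_1(\theta) > \dim \ell_\theta - 1$ to openness, or computes the generic stabilizer and shows it fails to be orthogonal to $H_\theta$ under $B$. This match with the list in Figure \ref{fig:table1} is the content of the classification of regular parabolic PVs and is the heart of the proof.
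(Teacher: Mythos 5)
The paper itself contains no proof of this statement: it is imported verbatim from \cite[Theorem 2.3]{rubenthaler}, and the remark following the definition of admissibility records exactly the dictionary you use, namely that admissibility of the one-node-circled diagram corresponds to regularity of the associated parabolic prehomogeneous vector space. Your argument is therefore a faithful reconstruction of Rubenthaler's actual route: the grading forces $X_{\theta} \in d_1(\theta)$ and $Y_{\theta} \in d_{-1}(\theta)$, the eigenvalue conditions $[H_{\theta}, X_{\theta}] = 2X_{\theta}$ and $[H_{\theta}, Y_{\theta}] = -2Y_{\theta}$ are automatic, and your Killing-form computation $[X, d_{-1}(\theta)] = \mf{z}_{\ell_{\theta}}(X)^{\perp}$ correctly converts the single condition $H_{\theta} = [X_{\theta}, Y_{\theta}]$ into the orthogonality criterion $B(H_{\theta}, \mf{z}_{\ell_{\theta}}(X_{\theta})) = 0$, which is the regularity criterion for spaces of parabolic type; the forward direction via surjectivity of $\mathrm{ad}\, X_{\theta} : d_0(\theta) \to d_1(\theta)$ on $\mf{sl}_2$-isotypic components is likewise as in the source. (Your phrase ``$H_{\theta} \in [d_1(\theta), d_{-1}(\theta)]$'' should be read as existence of a single bracket pair rather than membership in the span, but since the refined statement you actually use is quantified over a single $X$, no harm results.)

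One step of your elimination strategy, however, would fail. For the non-admissible diagrams you propose to ``verify the dimension obstruction $\dim d_1(\theta) > \dim \ell_{\theta} - 1$ to openness.'' No such obstruction ever occurs: for any $\mathbb{Z}$-grading arising from a parabolic, the pair $(L_{\theta}, d_1(\theta))$ is \emph{automatically} prehomogeneous --- $L_{\theta}$ always has a dense open orbit on $d_1(\theta)$ (Vinberg, Richardson) --- so $\dim \ell_{\theta} \geq \dim d_1(\theta)$ holds in every case, admissible or not. What fails for the diagrams absent from Figure \ref{fig:table1} is regularity, not prehomogeneity: the generic isotropy subalgebra is non-reductive and fails to be $B$-orthogonal to $H_{\theta}$, equivalently there is no nondegenerate relative invariant. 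Consequently only your second fallback (computing the generic stabilizer and checking that $B(H_{\theta}, \cdot)$ does not vanish on it) is a valid elimination method, and the case-by-case verification --- which you rightly identify as the heart of the matter, and which is precisely Rubenthaler's classification producing Figure \ref{fig:table1} --- must be carried out entirely in those terms.
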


In this way, we see that if $(\Psi, \theta)$ is admissible, then we obtain a family of $\mf{sl}_2$-triplets, one for each irreducible component $D_i$ of the diagram $D$ associated with $(\Psi, \theta)$.

\begin{remark} \label{rmk:characteristic}
Note that for each irreducible component of a weighted diagram, the corresponding weighted diagram will have label 0 at every node except for the circled node, which will have label 2.
\end{remark}

\begin{thm}[{\cite[Theorem 3.3]{rubenthaler}}] \label{thm:g-tilde-theta}
If the pair $(\Psi, \theta)$ is admissible, then the family of $\mf{sl}_2$-triplets $(Y_i, H_i, X_i)$ generates a simple subalgebra $\tilde{\mf{g}_{\theta}}$ of $\mf{g}$ with Cartan subalgebra $\mf{h}_{\theta}$.
\end{thm}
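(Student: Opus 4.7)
My plan is a Serre-style argument: produce Chevalley-type generators of $\tilde{\mf{g}_{\theta}}$ from the triplets $(X_i, H_i, Y_i)$ and show they satisfy the defining relations of a finite-type simple Lie algebra whose Dynkin diagram is read off from the circled-node structure of $D$. The key point — and the reason the naive ``pairwise commuting $\mf{sl}_2$'s'' picture fails — is that distinct irreducible components $D_i, D_j$ generically share non-circled nodes (as in the $D_9$ example above, where the two components overlap at $\alpha_3$ and $\alpha_4$), so the $\mf{sl}_2$-triplets interact through the root spaces of the shared nodes, and these interactions furnish precisely the off-diagonal Cartan-matrix entries of $\tilde{\mf{g}_{\theta}}$.

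First I would identify $\mf{h}_{\theta}$ as $\mathrm{span}\{H_i\}$. Each $H_i$ lies in $\mf{h}_{\theta_i} \subseteq \mf{h}_{\theta}$, where $\theta_i := D_i \cap \theta$. The defining conditions $\alpha(H_i) = 0$ for $\alpha \in \theta_i$ and $\gamma_i(H_i) = 2$, combined with the fact that the circled simple roots $\{\gamma_i\}$ restrict to a basis of $\mf{h}_{\theta}^*$, force linear independence and dimension equality, so $\mf{h}_{\theta} = \mathrm{span}\{H_i\}$.

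The core step is verifying the Serre relations. By the preceding theorem applied to each admissible component $D_i$, I can choose $X_i$ as a highest-weight vector for the $\ell_{\theta_i}$-action on $d_1(\theta_i)$; its $\mf{h}$-weight $\beta_i$ satisfies $\beta_i(H_i) = 2$, and each $a_{ji} := \beta_i(H_j)$ is a nonpositive integer (for $i \neq j$) encoding how the simple-root support of $\beta_i$ meets $D_j$. One then computes $[X_i, Y_j]$ — either $0$ (when $D_i, D_j$ are disjoint) or an element of $\mf{h}_{\theta}$ cut out by the $\mathbb{Z}^{|\Psi \setminus \theta|}$-grading of $\mf{g}$ induced by $\{H_i\}$ — and establishes the quantum Serre relation $(\text{ad}\,X_i)^{1 - a_{ji}} X_j = 0$ by an iterated-bracket/grading argument (eventually the root weight exits the root system of $\mf{g}$). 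Serre's theorem then identifies the algebra generated by the triplets with the Kac–Moody Lie algebra of Cartan matrix $A = (a_{ji})$, and finite-dimensionality of the ambient $\mf{g}$ forces $A$ to be of finite type.

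The hardest part is showing that $A$ corresponds to a \emph{connected} Dynkin diagram, so that $\tilde{\mf{g}_{\theta}}$ is simple rather than a proper direct sum — this is exactly where the previous ``commuting triplets'' sketch went wrong. I would handle this by inspection of the admissible shapes in Figure \ref{fig:table1}: for each pair of adjacent circled nodes $\gamma_i, \gamma_j$ (i.e., whose components $D_i, D_j$ overlap in at least one non-circled node), one verifies that $\beta_i$ picks up a nonzero coefficient on a simple root supported in $D_j$, yielding $a_{ij} \neq 0$. The finite enumeration of admissible shapes reduces this to a small case check; admissibility — by constraining each $D_i$ to one of the specific configurations of Figure \ref{fig:table1} — is precisely what guarantees both the nontriviality of these bridges (hence connectedness of the Dynkin diagram) and that the overall Cartan matrix $A$ is of finite type.
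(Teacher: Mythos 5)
Note first that this paper does not prove the statement at all: it is imported verbatim from Rubenthaler (\cite{rubenthaler}, Theorem 3.3), so your proposal can only be judged on its own merits, not against an in-paper argument. Your overall architecture (Chevalley--Serre generators, a Cartan matrix read off the circled-node structure, connectedness via the overlaps between components) is a viable route, and your observation that components interact through shared non-circled nodes is exactly right. But the core step fails as written: you cannot choose $X_i$ to be a highest-weight vector of the $\ell_{\theta_i}$-module $d_1(\theta_i)$, because the triple condition $[X_i,Y_i]=H_i$ forces $X_i$ to be a \emph{generic} element of $d_1(\theta_i)$ (a point of the open orbit of the associated prehomogeneous space --- this genericity is precisely what admissibility buys), and such an element is a sum of root vectors with distinct $\mf{h}$-weights. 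Concretely, for a component of type $A_3$ with the middle node circled, $H_i=\mathrm{diag}(1,1,-1,-1)$ requires $X_i$ to be an invertible off-diagonal $2\times 2$ block, e.g.\ $E_{13}+E_{24}$, whereas a highest-weight vector such as $E_{13}$ generates a triple closing on $\mathrm{diag}(1,0,-1,0)\neq H_i$. So ``its $\mf{h}$-weight $\beta_i$'' is ill-defined, and with it your matrix $a_{ji}=\beta_i(H_j)$. The repair is to use $\mf{h}_{\theta}$-weights: every root occurring in $d_1(\theta_i)$ has the form $\gamma_i+(\text{sum of elements of }\theta_i)$, and every $\alpha\in\theta_i$ kills every $H_j$ (a shared node of $D_i\cap D_j$ lies in $\theta_j$, and a node of $\theta_i$ outside $\Psi_j$ cannot be adjacent to $\Psi_j$ by maximality of $D_j$), so $X_i$ is automatically an $\mf{h}_{\theta}$-weight vector and the correct entries are $a_{ji}=\gamma_i(H_j)$. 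The same maximality argument is also what your first step actually needs: since $\theta_i\subseteq\theta$, the inclusion is $\mf{h}_{\theta}\subseteq\mf{h}_{\theta_i}$, the reverse of what you assert, so $H_i\in\mf{h}_{\theta}$ is not automatic from $H_i\in\mf{h}_{\theta_i}$.

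The second genuine gap is the claim that ``finite-dimensionality of the ambient $\mf{g}$ forces $A$ to be of finite type.'' This overlooks affine generalized Cartan matrices, whose Kac--Moody algebras \emph{do} admit nonzero finite-dimensional quotients (evaluation maps), so the Serre relations plus finite-dimensionality alone do not suffice; moreover, identifying the presented algebra with the Kac--Moody algebra for a general $A$ invokes Gabber--Kac and symmetrizability, not Serre's theorem. The loophole can be closed using the linear independence of the $H_i$ (which holds because $\gamma_j$ belongs to $\Psi_j$ only, so $H_j$ is the unique defining vector with a nonzero $\gamma_j^{\vee}$-coefficient): if $A$ were singular, the corresponding null combination $h_0=\sum_i c_iH_i=\sum_i c_i[X_i,Y_i]$ would be central in $\tilde{\mf{g}_{\theta}}$, hence $\tilde{\mf{g}_{\theta}}\subseteq\mf{z}_{\mf{g}}(h_0)=:\mf{l}$ with $h_0\in Z(\mf{l})\cap[\mf{l},\mf{l}]=0$ (as $\mf{l}$ is a Levi, hence reductive), forcing $h_0=0$ and contradicting independence; nondegenerate indecomposable non-finite-type matrices are then excluded since their Kac--Moody algebras are infinite-dimensional and simple. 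Two smaller slips: your parenthetical equates ``adjacent circled nodes'' with ``components sharing a non-circled node,'' missing the case where circled nodes are literally adjacent with disjoint components (e.g.\ $\theta=\varnothing$, where $\tilde{\mf{g}_{\theta}}=\mf{g}$); and the relation $[Y_i,X_j]=0$, which you leave implicit, follows cleanly from the $\mathbb{Z}^{r}$-grading of $\mf{g}$ by the coefficients of the circled simple roots, since no root of $\mf{g}$ has coefficients of mixed sign. With these repairs your strategy goes through, but as written the construction of the generators and the finite-type step both fail.
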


\begin{example} \label{ex:A1E8-characteristic}
Let $\mf{g}$ be the complex Lie algebra of type $E_8$ with Cartan subalgebra $\mf{h} = \mathbb{C}^8$. By \cite[Plate VII]{bourbaki}, the root system corresponding to $(\mf{g},\mf{h})$ has the following simple roots:
\begin{align*}
\Psi= \Big \{ \alpha_1 &= \frac{1}{2} (\varepsilon_1 + \varepsilon_8) - \frac{1}{2} (\varepsilon_2 + \varepsilon_3 + \varepsilon_4 + \varepsilon_5 + \varepsilon_6 + \varepsilon_7 ), \\
\alpha_2 &= \varepsilon_1 + \varepsilon_2, \; \alpha_3 = \varepsilon_2 - \varepsilon_1, \; \alpha_4 = \varepsilon_3 - \varepsilon_2, \; \alpha_5 = \varepsilon_4 - \varepsilon_3, \\
\alpha_6 &= \varepsilon_5 - \varepsilon_4, \; \alpha_7 = \varepsilon_6 - \varepsilon_5, \; \alpha_8 = \varepsilon_7-\varepsilon_6 \Big \}.
\end{align*}
Let $\theta = \{ \alpha_2, \alpha_3, \alpha_4, \alpha_5, \alpha_6, \alpha_7, \alpha_8 \}$ so that $\Psi \setminus \theta = \{ \alpha_1 \}$. The diagram associated to $(\Psi , \theta)$ appears as \textit{16)} in Figure \ref{fig:table1}, and is consequently admissible. It is not hard to check that 
$$\mf{h}_{\theta} = \{ H \in \mf{h} \; \vert \; \alpha (H) = 0 \text{ for all } \alpha \in \theta \} = \left \{ (0,0,0,0,0,0,0,x) \; \vert \; x \in \mathbb{C} \right \}.$$
Additionally, $H_{\theta} \in \mf{h}_{\theta}$ is given by $H_{\theta} = (0,0,0,0,0,0,0,4)$. By Theorem \ref{thm:g-tilde-theta}, $\tilde{\mf{g}_{\theta}} = \langle X_{\theta}, H_{\theta}, Y_{\theta} \rangle \simeq \mf{sl}_2$ is a simple subalgebra of $\mf{g}$ with Cartan subalgebra $\mf{h}_{\theta}$ (and defining vector $H_{\theta}$). Moreover, as explained in Remark \ref{rmk:characteristic}, $\tilde{\mf{g}_{\theta}}$ has the following weighted diagram:
\vspace{.25em}
\begin{center}
\begin{tikzcd}[row sep = small, column sep = small]
2 \arrow[dash]{r} & 0 \arrow[dash]{r} & 0 \arrow[dash]{r} \arrow[dash]{d} & 0 \arrow[dash]{r} & 0 \arrow[dash]{r} & 0 \arrow[dash]{r} & 0 \\
 & & 0
\end{tikzcd}
\end{center}
\vspace{.25em}

Therefore, \cite[Table 20]{dynkin} gives that $\tilde{\mf{g}_{\theta}} = A_1^8$.
\end{example}

\section{\texorpdfstring{$S$}{S}-irreducible and admissible dual pairs} \label{sec:S-irred}

The goal of this section is to define the notion of an \textit{$S$-irreducible dual pair}, and to list all of the $S$-irreducible dual pairs of the exceptional Lie algebras. To start, we define the notion of an $S$-subalgebra, which was initially introduced in \cite[No.~23]{dynkin}. 

\begin{defn}[{\cite[Definition 5.9]{rubenthaler}}]
Let $\mf{g}$ be a semisimple Lie algebra. An \textbf{$S$-subalgebra} of $\mf{g}$ is a semisimple subalgebra that is \textit{not} contained in a proper regular subalgebra of $\mf{g}$. 
\end{defn}

With this notion of \textit{$S$-subalgebra}, we can define a notion of irreducibility for dual pairs:

\begin{defn}[{\cite[Definition 5.11]{rubenthaler}}] \label{def:5.11-S-irred}
Let $(\mf{a},\mf{b})$ be a semisimple dual pair of a semisimple algebra $\mf{g}$. We say that the pair $(\mf{a},\mf{b})$ is \textbf{$S$-irreducible} if the algebra $\mf{a} \oplus \mf{b}$ is an $S$-subalgebra.
\end{defn}

\begin{remark}
In \cite{HoweSeries}, Howe considers semisimple dual pairs in $\mf{sp}_{2n}$, and defines a notion of dual pair irreducibility in that setting. While the notion of dual pair irreducibility in Definition \ref{def:5.11-S-irred} a priori seems possibly different, it turns out that the two notions coincide in the setting of $\mf{sp}_{2n}$ \cite[Remark 5.13]{rubenthaler}.
\end{remark}

Figures \ref{fig:S-subalg-classical-1} and \ref{fig:S-subalg-classical-2} show all of the $S$-subalgebras of the classical simple Lie algebras up to rank 6. Similarly, Figure \ref{fig:S-subalgebras} shows all of the $S$-subalgebras of the exceptional Lie algebras, along with their inclusion relations. These figures, together with the following proposition, give us a method for easily identifying lots of $S$-irreducible dual pairs: 

\begin{figure}
\begin{subfigure}{\textwidth}
        \centering\begin{tikzpicture}

\begin{scope}[xshift=-2cm]
    \node[draw, rectangle] (box1) at (0,0) {$A_2$};
    
    \node[draw, rectangle] (box2) at (0,-1.5) {$A_1^4$};
    
    \draw (box1.south) -- (box2.north);
\end{scope}

\begin{scope}[xshift=2cm]
    \node[draw, rectangle] (box1) at (0,0) {$B_2 \simeq C_2$};
    
    \node[draw, rectangle] (box2) at (0,-1.5) {$A_1^{10}$};
    
    \draw (box1.south) -- (box2.north);
\end{scope}

        \end{tikzpicture}
\end{subfigure}

\begin{subfigure}{\textwidth}
        \centering\begin{tikzpicture}

\begin{scope}[xshift=-4.5cm]
    \node (top) at (0,0) [draw, rectangle] {$A_3 \simeq D_3$};
    
    \node (left) at (-1.5,-1.5) [draw, rectangle] {$B_2^1$};
    
    \node (right) at (1.5,-1.5) [draw, rectangle] {$A_1^2 \oplus A_1^2$};
    
    \node (below2) at (-1.5,-3) [draw, rectangle] {$A_1^{10}$};
    
    \draw (top.south) -- (left.north);
    \draw (top.south) -- (right.north);
    \draw (left.south) -- (below2.north);
\end{scope}

\begin{scope}
    \node (box1) at (0,0) [draw, rectangle] {$B_3$};
    
    \node (box2) at (0,-1.5) [draw, rectangle] {$G_2^1$};
    
    \node (box3) at (0,-3) [draw, rectangle] {$A_1^{28}$};
    
    \draw (box1.south) -- (box2.north);
    \draw (box2.south) -- (box3.north);
\end{scope}

\begin{scope}[xshift=4.5cm]
    \node (top) at (0,0) [draw, rectangle] {$C_3$};
    
    \node (left) at (-1.5,-1.5) [draw, rectangle] {$A_1^{35}$};
    
    \node (right) at (1.5,-1.5) [draw, rectangle] {$A_1^8 \oplus A_1^3$};
    
    \draw (top.south) -- (left.north);
    \draw (top.south) -- (right.north);
\end{scope}

        \end{tikzpicture}
\end{subfigure}

\vspace{-.5cm}

\begin{subfigure}{\textwidth}
        \centering\begin{tikzpicture}

\begin{scope}[xshift=-3.5cm]
    \node (box1) at (0,0) [draw, rectangle] {$A_4$};
    \node (box2) at (0,-1.5) [draw, rectangle] {$B_2^2$};
    \node (box3) at (0,-3) [draw, rectangle] {$A_1^{20}$};
    \draw (box1.south) -- (box2.north);
    \draw (box2.south) -- (box3.north);
\end{scope}

\begin{scope}[xshift=4.15cm,yshift=.35cm]
    \node (top) at (0,0) [draw, rectangle] {$B_4$};
    
    \node (left) at (-1.5,-1.5) [draw, rectangle] {$A_1^{60}$};
    
    \node (right) at (1.5,-1.5) [draw, rectangle] {$A_1^6 \oplus A_1^6$};
    
    \draw (top.south) -- (left.north);
    \draw (top.south) -- (right.north);
\end{scope}

\begin{scope}[xshift=-6cm,yshift=-3cm]
    \node (top) at (0,0) [draw, rectangle] {$C_4$};
    
    \node (left) at (-1.5,-1.5) [draw, rectangle] {$A_1^{84}$};
    
    \node (right) at (1.5,-1.5) [draw, rectangle] {$A_1^4 \oplus A_1^4 \oplus A_1^4$};
    
    \draw (top.south) -- (left.north);
    \draw (top.south) -- (right.north);
\end{scope}

\begin{scope}[xshift=-.25cm,yshift=-2.25cm]
    \node (top) at (1.5,0) [draw, rectangle] {$D_4$};
    
    \node (left) at (-1.5,-1.5) [draw, rectangle] {$B_3^1$};

    \node (middle) at (-.35,-1.5) [draw, rectangle] {$A_2^3$};
    
    \node (right) at (1.5,-1.5) [draw, rectangle] {$B_2^{1'} \oplus A_1^{2'}$};
    
    \node (rightmost) at (3.85,-1.5) [draw, rectangle] {$B_2^{1''} \oplus A_1^{2''}$};
    
	\node (rrmost) at (6.45,-1.5) [draw, rectangle] {$B_2^{1'''} \oplus A_1^{2'''}$};    
    
	\node (rrmost-1) at (6.45,-3) [draw, rectangle] {$A_1^{10'''} \oplus A_1^{2'''}$};    
    
    \node (belowLeft) at (-1.5,-3) [draw, rectangle] {$G_2^1$};
    
    \node (belowBelowLeft) at (-1.5,-4.5) [draw, rectangle] {$A_1^{28}$};
    
    \node (belowRight) at (1.5,-3) [draw, rectangle] {$A_1^{10'} \oplus A_1^{2'}$};

    \node (belowmiddle) at (.5,-4.5) [draw, rectangle] {$A_1^{12}$};
	
	\node (belowRightmost) at (3.85,-3) [draw, rectangle] {$A_1^{10''} \oplus A_1^{2''}$};    
    
    \draw (top.south) -- (left.north);
    \draw (top.south) -- (right.north);
    \draw (top.south) -- (rightmost.north);

    \draw (top.south) -- (middle.north);
    \draw (middle.south) -- (belowmiddle.north);
    \draw (belowRight.south) -- (belowmiddle.north);
    \draw (rightmost.south) -- (belowRightmost.north);
    
    \draw (top.south) -- (rrmost.north);
    \draw (rrmost.south) -- (rrmost-1.north);
    \draw (rrmost-1.south) --(belowmiddle.north);
    
    \draw (left.south) -- (belowLeft.north);
    
    \draw (belowLeft.south) -- (belowBelowLeft.north);
    
    \draw (right.south) -- (belowRight.north);
    
    \draw (belowRightmost.south) -- (belowmiddle.north);
\end{scope}

        \end{tikzpicture}
\end{subfigure}

\caption{Inclusion relations among the $S$-subalgebras of the classical simple Lie algebras up to rank 4 (cf.~\cite[Table XII]{S-subalgebras}).}
\label{fig:S-subalg-classical-1}
\end{figure}
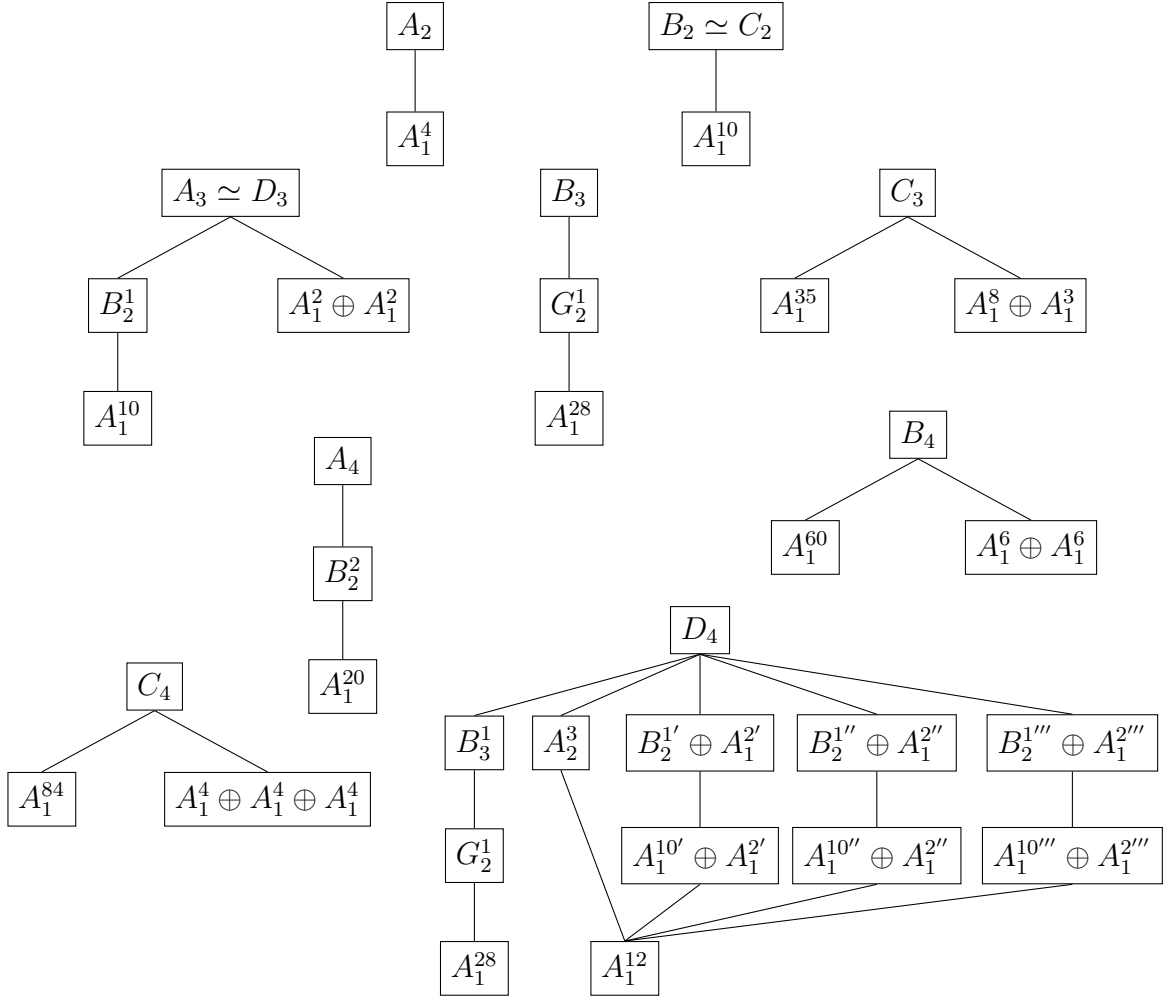

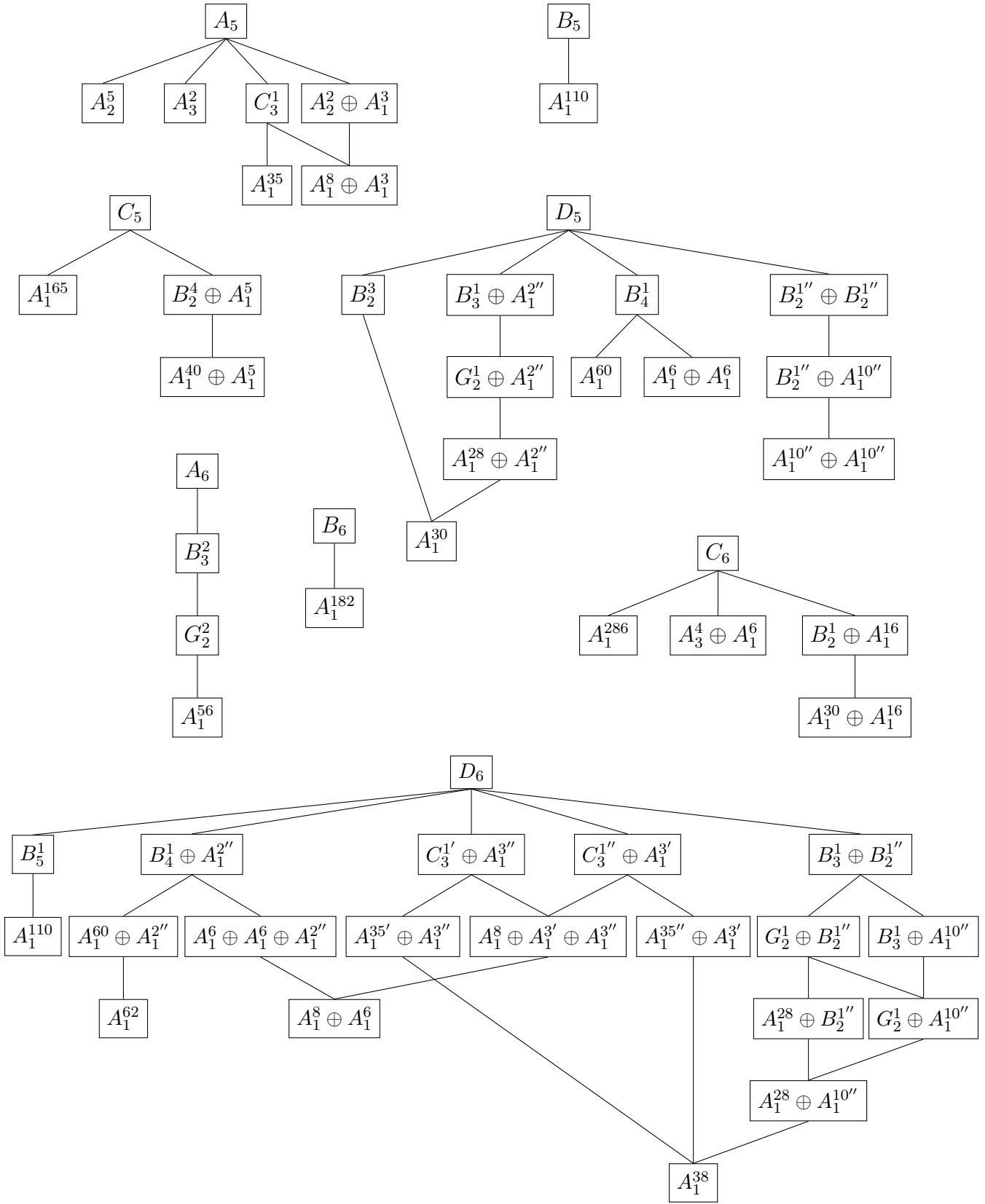
\begin{figure}
\begin{subfigure}{\textwidth}
\centering\begin{tikzpicture}

\begin{scope}[xshift=-2cm]
    \node[draw, rectangle] (top) at (-.25,0) {$A_5$};
    
    \foreach \i/\label in {1/$A_2^5$, 2/$A_3^2$, 3/$C_3^1$, 4/$A_2^2 \oplus A_1^3$} {
        \node[draw, rectangle] (box\i) at (1.5*\i-4,-1.5) {\label};
        \draw (top.south) -- (box\i.north);
    }

    \node[draw, rectangle] (box5) at (.5,-3) {$A_1^{35}$};
    \draw (box3.south) -- (box5.north);
    
    \node[draw, rectangle] (box6) at (2,-3) {$A_1^8 \oplus A_1^3$};
    \draw (box3.south) -- (box6.north);
    \draw (box4.south) -- (box6.north);
\end{scope}

\begin{scope}[xshift=4cm]
    \node[draw, rectangle] (box1) at (0,0) {$B_5$};
    
    \node[draw, rectangle] (box2) at (0,-1.5) {$A_1^{110}$};
    
    \draw (box1.south) -- (box2.north);
\end{scope}

\begin{scope}[xshift=-4cm,yshift=-3.5cm]
    \node (top) at (0,0) [draw, rectangle] {$C_5$};
    
    \node (left) at (-1.5,-1.5) [draw, rectangle] {$A_1^{165}$};
    
    \node (right) at (1.5,-1.5) [draw, rectangle] {$B_2^4 \oplus A_1^5$};
    
    \node (belowRight) at (1.5,-3) [draw, rectangle] {$A_1^{40} \oplus A_1^5$};
    
    \draw (top.south) -- (left.north);
    \draw (top.south) -- (right.north);
    
    \draw (right.south) -- (belowRight.north);
\end{scope}

\begin{scope}[xshift=4cm,yshift=-3.5cm]
    \node[draw, rectangle] (top) at (0,0) {$D_5$};
    
    \foreach \i/\label in {1/$B_2^3$, 2/$B_3^1 \oplus A_1^{2''}$, 3/$B_4^1$} {
        \node[draw, rectangle] (box\i) at (2.5*\i-6.25,-1.5) {\label};
        \draw (top.south) -- (box\i.north);
    }

 \node[draw, rectangle] (box4) at (4.75,-1.5) {$B_2^{1''} \oplus B_2^{1''}$};
    \draw (top.south) -- (box4.north);

 \node[draw, rectangle] (box9) at (4.75,-3) {$B_2^{1''} \oplus A_1^{10''}$};
    \draw (box4.south) -- (box9.north);

     \node[draw, rectangle] (box10) at (4.75,-4.5) {$A_1^{10''} \oplus A_1^{10''}$};
    \draw (box9.south) -- (box10.north);

    \node[draw, rectangle] (box5) at (-1.25,-3) {$G_2^{1} \oplus A_1^{2''}$};
    \draw (box2.south) -- (box5.north);

    \node[draw, rectangle] (box7) at (-1.25,-4.5) {$A_1^{28} \oplus A_1^{2''}$};
    \draw (box5.south) -- (box7.north);

    \node[draw, rectangle] (box6) at (.5,-3) {$A_1^{60}$};
    \draw (box3.south) -- (box6.north);

    \node[draw, rectangle] (box8) at (2.25,-3) {$A_1^{6} \oplus A_1^6$};
    \draw (box3.south) -- (box8.north);

    \node[draw, rectangle] (box11) at (-2.5,-6) {$A_1^{30} $};
    \draw (box1.south) -- (box11.north);
    \draw (box7.south) -- (box11.north);
\end{scope}
\end{tikzpicture}
\end{subfigure}

\vspace{-2cm}

\begin{subfigure}{\textwidth}
        \centering\begin{tikzpicture}
\begin{scope}[xshift=-5cm,yshift=1.5cm]
    \node (box1) at (0,0) [draw, rectangle] {$A_6$};
    \node (box2) at (0,-1.5) [draw, rectangle] {$B_3^2$};
    \node (box3) at (0,-3) [draw, rectangle] {$G_2^{2}$};

    \node (box4) at (0,-4.5) [draw, rectangle] {$A_1^{56}$};
    \draw (box1.south) -- (box2.north);
    \draw (box2.south) -- (box3.north);
    \draw (box3.south) -- (box4.north);
\end{scope}

\begin{scope}[xshift=-2.5cm,yshift=.5cm]
   
    \node (box1) at (0,0) [draw, rectangle] {$B_6$};
    
    \node (box2) at (0,-1.5) [draw, rectangle] {$A_1^{182}$};
    
    \draw (box1.south) -- (box2.north);
\end{scope}

\begin{scope}[xshift=4.5cm]
    \node (top) at (0,0) [draw, rectangle] {$C_6$};
    \node (left) at (-2,-1.5) [draw, rectangle] {$A_1^{286}$};
    \node (middle) at (0,-1.5) [draw, rectangle] {$A_3^4 \oplus A_1^6$};
    \node (right) at (2.5,-1.5) [draw, rectangle] {$B_2^1 \oplus A_1^{16}$};
    \node (belowRight) at (2.5,-3) [draw, rectangle] {$A_1^{30} \oplus A_1^{16}$};
    \draw (top.south) -- (left.north);
    \draw (top.south) -- (right.north);
    \draw (top.south) -- (middle.north);
    \draw (right.south) -- (belowRight.north);
\end{scope}

\begin{scope}[xshift=0cm,yshift=-4cm]
    \node (top) at (0,0) [draw, rectangle] {\small $D_6$};
    
    \node (box1) at (-8,-1.5) [draw, rectangle] {\small $B_5^{1}$};
    \node (box1-1) at (-8,-3) [draw, rectangle] {\small $A_1^{110}$};
    \draw (box1.south) -- (box1-1.north);

    \node (box2) at (-5.1,-1.5) [draw, rectangle] {\small $B_4^1 \oplus A_1^{2''}$};
    \node (box2-1) at (-6.35,-3) [draw, rectangle] {\small $A_1^{60} \oplus A_1^{2''}$};
    \node (box2-1-1) at (-6.35,-4.5) [draw, rectangle] {\small $A_1^{62}$};
    \node (box2-2) at (-3.85,-3) [draw, rectangle] {\small $A_1^{6} \oplus A_1^6 \oplus A_1^{2''}$};
    \node (box2-2-1) at (-2.5,-4.5) [draw, rectangle] {\small $A_1^8 \oplus A_1^6$};
    \draw (box2.south) -- (box2-1.north);
    \draw (box2-1.south) -- (box2-1-1.north);
    \draw (box2.south) -- (box2-2.north);
    \draw (box2-2.south) -- (box2-2-1.north);

    \node (newbox) at (0,-1.5) [draw, rectangle] {\small $C_3^{1'} \oplus A_1^{3''}$};
    \node (newbox-1) at (-1.25,-3) [draw, rectangle] {\small $A_1^{35'} \oplus A_1^{3''}$};

    \node (box3) at (2.85,-1.5) [draw, rectangle] {\small $C_3^{1''} \oplus A_1^{3'}$};
    \node (box3-1) at (1.4,-3) [draw, rectangle] {\small $A_1^{8} \oplus A_1^{3'} \oplus A_1^{3''}$};
    \node (box3-2) at (4.05,-3) [draw, rectangle] {\small $A_1^{35''} \oplus A_1^{3'} $};
    \draw (box3.south) -- (box3-1.north);
    \draw (box3-1.south) -- (box2-2-1.north);
    \draw (box3.south) -- (box3-2.north);  

    \node (box4) at (7.1,-1.5) [draw, rectangle] {\small $B_3^1 \oplus B_2^{1''}$};
    \node (box4-1) at (6.15,-3) [draw, rectangle] {\small $G_2^1 \oplus B_2^{1''}$}; 
    \node (box4-2) at (8.25,-3) [draw, rectangle] {\small $B_3^1 \oplus A_1^{10''}$}; 
    \node (box4-1-1) at (6.15,-4.5) [draw, rectangle] {\small $A_1^{28} \oplus B_2^{1''}$}; 
    \node (box4-1-2) at (8.25,-4.5) [draw, rectangle] {\small $G_2^{1} \oplus A_1^{10''}$}; 
    \node (box4-1-1-1) at (6.15,-6) [draw, rectangle] {\small $A_1^{28} \oplus A_1^{10''}$}; 
    \node (box4-1-1-1-1) at (4.05,-7.5) [draw, rectangle] {\small $A_1^{38}$};

    \draw (box4.south) -- (box4-1.north);
    \draw (box4.south) -- (box4-2.north);  
    \draw (box4-1.south) -- (box4-1-1.north);  
    \draw (box4-1.south) -- (box4-1-2.north); 
    \draw (box4-2.south) -- (box4-1-2.north); 
    \draw (box4-1-1.south) -- (box4-1-1-1.north); 
    \draw (box4-1-2.south) -- (box4-1-1-1.north); 
    \draw (box4-1-1-1.south) -- (box4-1-1-1-1.north); 
    \draw (box3-2.south) -- (box4-1-1-1-1.north); 
    
    \draw (top.south) -- (newbox.north);
    \draw (newbox.south) -- (newbox-1.north);
    \draw (newbox.south) -- (box3-1.north);
    \draw (newbox-1.south) -- (box4-1-1-1-1.north);

    \draw (top.south) -- (box1.north);
    \draw (top.south) -- (box2.north);
    \draw (top.south) -- (box3.north);
    \draw (top.south) -- (box4.north);
\end{scope}
\end{tikzpicture}
\end{subfigure}
\caption{Inclusion relations among the $S$-subalgebras of the classical simple Lie algebras of ranks 5 and 6 (cf.~\cite[Table XII]{S-subalgebras}).}
\label{fig:S-subalg-classical-2}
\end{figure}

\begin{prop}[{\cite[Proposition 5.19]{rubenthaler}}] \label{prop:max-S-is-dp}
Suppose $\mf{g}$ is simple. Let $\mf{a}$ and $\mf{b}$ be semisimple subalgebras of $\mf{g}$ such that $\mf{a} \oplus \mf{b}$ is a maximal $S$-subalgebra. Then $(\mf{a},\mf{b})$ is an $S$-irreducible dual pair in $\mf{g}$.
\end{prop}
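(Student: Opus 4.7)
Here is my plan. The $S$-irreducibility of $(\mf{a}, \mf{b})$ is immediate from Definition \ref{def:5.11-S-irred} and the hypothesis, so the entire content is to verify the dual pair conditions $\mf{z}_{\mf{g}}(\mf{b}) = \mf{a}$ and $\mf{z}_{\mf{g}}(\mf{a}) = \mf{b}$. By the symmetric roles of $\mf{a}$ and $\mf{b}$, it suffices to prove the first equality. Clearly $\mf{a} \subseteq \mf{c} := \mf{z}_{\mf{g}}(\mf{b})$, so my strategy is to enlarge $\mf{a} \oplus \mf{b}$ to $\mf{c} \oplus \mf{b}$, argue that the enlargement is again an $S$-subalgebra containing $\mf{a} \oplus \mf{b}$, and then invoke the maximality hypothesis to force $\mf{c} = \mf{a}$.

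\textbf{Step 1: Show that $\mf{c}$ is semisimple.} Since $\mf{b}$ is semisimple and hence reductive in $\mf{g}$, $\mf{c}$ is automatically reductive in $\mf{g}$. I would rule out a nontrivial center $Z := Z(\mf{c})$ as follows. Extend a Cartan subalgebra of $\mf{c}$ (which contains $Z$) to a Cartan $\mf{h}$ of $\mf{g}$; then $\mf{z}_{\mf{g}}(Z) \supseteq \mf{h}$ is a regular subalgebra of $\mf{g}$. Since $\mf{g}$ is simple (hence centerless) and $Z \neq 0$, this centralizer is a \emph{proper} regular subalgebra. But $\mf{a} \subseteq \mf{c}$ commutes with $Z \subseteq Z(\mf{c})$, and $\mf{b}$ commutes with $\mf{c} \supseteq Z$, so $\mf{a} \oplus \mf{b} \subseteq \mf{z}_{\mf{g}}(Z)$, contradicting the $S$-subalgebra hypothesis on $\mf{a} \oplus \mf{b}$.

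\textbf{Step 2: Form $\mf{c} \oplus \mf{b}$ and check it is an $S$-subalgebra.} Commutativity gives $\mf{c} \cap \mf{b} \subseteq Z(\mf{b}) = 0$, so $\mf{c} + \mf{b}$ really is a direct sum of commuting semisimple ideals, hence a semisimple subalgebra of $\mf{g}$ containing $\mf{a} \oplus \mf{b}$. Any proper regular subalgebra of $\mf{g}$ containing $\mf{c} \oplus \mf{b}$ would also contain the $S$-subalgebra $\mf{a} \oplus \mf{b}$, which is impossible; so $\mf{c} \oplus \mf{b}$ is itself either all of $\mf{g}$ or an $S$-subalgebra. The former possibility is degenerate: $\mf{g} = \mf{c} \oplus \mf{b}$ as commuting Lie summands forces one of the factors to be zero by simplicity of $\mf{g}$, and either outcome yields the conclusion directly.

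\textbf{Step 3: Invoke maximality.} In the non-degenerate case, $\mf{c} \oplus \mf{b}$ is an $S$-subalgebra containing the maximal $S$-subalgebra $\mf{a} \oplus \mf{b}$. By maximality, $\mf{c} \oplus \mf{b} = \mf{a} \oplus \mf{b}$, and since both sums are internal direct sums with the same summand $\mf{b}$, this forces $\mf{c} = \mf{a}$. Combined with the symmetric argument, $(\mf{a},\mf{b})$ is a dual pair, completing the proof.

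The main obstacle is Step 1, ruling out $Z(\mf{c}) \neq 0$; this is where the simplicity of $\mf{g}$ and the $S$-subalgebra hypothesis genuinely combine, via the observation that for simple $\mf{g}$ the centralizer of any nontrivial toral subspace of a Cartan is a proper regular subalgebra. Once $\mf{c}$ is known to be semisimple, Steps 2 and 3 reduce to an essentially formal monotonicity argument against the maximality hypothesis.
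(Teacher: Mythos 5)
The paper does not actually prove Proposition \ref{prop:max-S-is-dp} --- it is imported from Rubenthaler's paper as a citation --- so there is no in-paper argument to compare yours against line by line. Taken on its own terms, your proof is correct, and it is the natural argument for this statement. Step 1 is the real content, and you have it right: since $\mf{b}$ is semisimple, hence reductive in $\mf{g}$, the centralizer $\mf{c} = \mf{z}_{\mf{g}}(\mf{b})$ is reductive in $\mf{g}$ (standard fact), so a nonzero center $Z$ of $\mf{c}$ would consist of $\mf{g}$-semisimple elements and could be placed inside a Cartan subalgebra $\mf{h}$ of $\mf{g}$; then $\mf{z}_{\mf{g}}(Z)$ is stable under $\mathrm{ad}\,\mf{h}$, hence regular, and proper because simple $\mf{g}$ is centerless --- and it contains $\mf{a} \oplus \mf{b}$, contradicting the $S$-subalgebra hypothesis. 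This is precisely the mechanism by which a nontrivial central torus would put the pair inside a proper Levi (cf.\ the role of $\ell_Z = \mf{z}_{\mf{g}}(Z)$ in Lemma \ref{lem:rub-levis}). Steps 2 and 3 are then the formal monotonicity argument against maximality, and your reading of ``maximal $S$-subalgebra'' as maximal \emph{among} $S$-subalgebras is the one the paper uses (cf.\ the entries ``No $(\subset F_4^1 \oplus A_1^{3''})$'' in Table \ref{table:S-irreducibles}); your Step 3 needs only this weaker hypothesis, which is the right level of generality.

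Two minor points to tighten, neither a genuine gap. First, in Step 2 your claim that the degenerate outcome ``yields the conclusion directly'' fails on the branch $\mf{b} = 0$: there $\mf{z}_{\mf{g}}(\mf{b}) = \mf{g} \neq \mf{a}$, since a maximal $S$-subalgebra is proper. But this branch is vacuous under the implicit standing convention that both members of a dual pair are nonzero: then $0 \neq \mf{a} \subseteq \mf{c}$ and $\mf{b} \neq 0$, and $\mf{c} \oplus \mf{b} = \mf{g}$ is already impossible because two nonzero commuting summands filling out $\mf{g}$ would be proper ideals of the simple algebra $\mf{g}$. Second, if you want to avoid invoking reductivity of the centralizer, note that Jordan components of any $z \in Z(\mf{c})$ again lie in $Z(\mf{c})$ (they are polynomials in $\mathrm{ad}\,z$ without constant term); a nonzero semisimple component is handled exactly as in your Step 1, while a nonzero nilpotent component $n$ gives $\mf{a} \oplus \mf{b} \subseteq \mf{z}_{\mf{g}}(n) \subseteq \bigoplus_{j \geq 0} \mf{g}_j(h)$ for a Jacobson--Morozov semisimple element $h$, and this nonnegative parabolic part is again a proper regular subalgebra --- the same contradiction.
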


It is important to note that the converse of Proposition \ref{prop:max-S-is-dp} does not hold (i.e.~not all $S$-irreducible dual pairs come from maximal $S$-subalgebras). 

\begin{example}
As we saw in Example \ref{ex:A1E8-characteristic}, $A_1^8$ is an admissible subalgebra of $E_8$ with $\Psi \setminus \theta = \{ \alpha_1 \}$. By \cite[Example 25, p.~146]{S-K}, we can deduce that $\mf{z}_{E_8} (A_1^8) = G_2^1 \oplus G_2^1$ (cf.~\cite[Section 6.10]{rubenthaler}). From Figure \ref{fig:S-subalgebras}, we see that $A_1^8 \oplus G_2^1 \oplus G_2^1$ is a (non-maxial) $S$-subalgebra in $E_8$. Additionally, Figure \ref{fig:S-subalgebras} and Proposition \ref{prop:max-S-is-dp} imply that $(A_1^8, G_2^1)$ is a dual pair in $F_4$ and that $(G_2^1, F_4^1)$ is a dual pair in $E_8$. Therefore, 
$$\mf{z}_{E_8}(G_2^1 \oplus G_2^1) = \mf{z}_{F_4}(G_2^1) = A_1^8.$$
In this way, we see that $(A_1^8, G_2^1 \oplus G_2^1)$ is an $S$-irreducible dual pair in $E_8$.
\end{example}

As we will see in Theorem \ref{thm:8.1}, this is where the notion of admissibility becomes useful.

\newpage

\begin{thm}[{\cite[Theorem 8.1]{rubenthaler}}] \label{thm:8.1}
Let $\mf{g}$ be a simple Lie algebra and let $(\mf{a}, \mf{b})$ be an $S$-irreducible dual pair in $\mf{g}$. Then with the exception of the following cases, at least one of $\mf{a}$ and $\mf{b}$ is admissible:
\begin{enumerate}[label = (\roman*)]
\item $\mf{g}$ is of type $D_n$ and $(\mf{a},\mf{b})$ is of type $(D_k, D_{\ell})$, where $n=2k\ell$ and $k,\ell \geq 2$. 
\item $\mf{g}$ is of type $D_n$ and $(\mf{a}, \mf{b})$ is of type $(B_{p_1} \oplus B_{q_1}, B_{p_2} \oplus B_{q_2})$, where $(B_{p_1}, B_{p_2})$ is an $S$-irreducible dual pair in $B_p$ (with $2p+1 = (2p_1 + 1)(2p_2+1)$), where $(B_{q_1}, B_{q_2})$ is an $S$-irreducible dual pair in $B_q$ (with $2q+1 = (2q_1 + 1)(2q_2+1)$), and where $n = p + q + 1$. 
\end{enumerate}
\end{thm}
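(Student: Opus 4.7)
The plan is to proceed by a case analysis on the type of the simple Lie algebra $\mf{g}$, using the known classifications of $S$-subalgebras. Since $(\mf{a},\mf{b})$ being $S$-irreducible means $\mf{a}\oplus \mf{b}$ is an $S$-subalgebra, I would first reduce to understanding which semisimple subalgebras of $\mf{g}$ are $S$-subalgebras. By Proposition \ref{prop:max-S-is-dp}, when $\mf{a}\oplus\mf{b}$ is a \emph{maximal} $S$-subalgebra we automatically get an $S$-irreducible dual pair, so I would first verify the theorem for these and then handle the (non-maximal) cases separately by walking down the $S$-subalgebra inclusion lattices already tabulated in Figures \ref{fig:S-subalg-classical-1}, \ref{fig:S-subalg-classical-2} and \ref{fig:S-subalgebras}.

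For the exceptional Lie algebras the proof is essentially a finite verification: for each entry in Dynkin's $S$-subalgebra tables that can be written as $\mf{a}\oplus\mf{b}$ with $(\mf{a},\mf{b})$ a dual pair, I would compute the defining vector of $\mf{a}$ (or $\mf{b}$) embedded in $\mf{g}$ and read off its weighted diagram, then check membership in the list of $25$ admissible diagrams of Figure \ref{fig:table1}. For the classical types the $S$-irreducible dual pairs come from tensor product constructions of the natural representation: in $A_{n-1}$ one has $(A_{p-1}, A_{q-1})$ with $n=pq$, coming from $\mathbb{C}^n = \mathbb{C}^p\otimes \mathbb{C}^q$; in $C_n$ one has mixed orthogonal--symplectic pairs; and in $B_n$ or $D_n$ one has either orthogonal--orthogonal pairs (from $\mathbb{C}^n = \mathbb{C}^p\otimes \mathbb{C}^q$ with both factors orthogonal) or symplectic--symplectic pairs (with $n=4pq$). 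For each such family I would compute the weighted diagram of one factor in the standard Bourbaki coordinates and compare with entries 1--6 of Figure \ref{fig:table1}.

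The key thing to track is exactly when one of the weighted diagrams fails to match any admissible template. For the $(D_k,D_\ell)$ pair in $D_{2k\ell}$, the embedding of $D_k$ through the tensor product $\mathbb{C}^{2k}\otimes \mathbb{C}^{2\ell}$ produces a defining vector whose weighted diagram in $D_{2k\ell}$ has a pattern of nonzero labels near the forked end that cannot be realized by any of the admissible $D_n$ diagrams (5) or (6); similarly, the iterated $(B_{p_1}\oplus B_{q_1}, B_{p_2}\oplus B_{q_2})$ pair in $D_{p+q+1}$ is built out of $B$-type factors in a way that yields a multi-circled diagram whose irreducible components are all of $B$-type with constraints that fall outside of template (2). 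These two constructions are precisely the sources of the exceptions (i) and (ii), which can be identified by rewriting the relevant pairs as tensor products of \emph{even-dimensional} orthogonal or iterated odd-orthogonal representations.

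The main obstacle will be the bookkeeping for the classical cases: one must carefully compute the defining vector of each candidate admissible member using the normalized form $(\cdot,\cdot)_{\mf{g}}$ from Section \ref{sec:prelims}, translate it into a weighted diagram on Bourbaki's simple roots, and verify the inequality constraints ($3k\leq 2n+1$ in (2), $6k\leq 2n$ in (3), $3k\leq 2n$ in (5), etc.) that distinguish admissible from non-admissible diagrams. The exceptional exclusions will emerge precisely when these numerical constraints are violated by the tensor product construction, and the proof will need to confirm that these are the \emph{only} obstructions --- meaning that for every other classical $S$-irreducible pair, at least one member's defining vector satisfies the constraints of some entry of Figure \ref{fig:table1}.
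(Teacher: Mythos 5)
First, note that the paper does not actually prove this statement: Theorem \ref{thm:8.1} is quoted verbatim from Rubenthaler, and his proof runs through the theory of prehomogeneous vector spaces (admissibility of $(\Psi,\theta)$ is regularity of an associated PV-space, and the classification leans on Sato--Kimura), not through a direct enumeration of weighted diagrams. So your proposal must be judged as a standalone argument, and as such it has two genuine gaps.

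The first gap is exhaustiveness. Your classical-type argument assumes that every $S$-irreducible dual pair in $A_n$, $B_n$, $C_n$, $D_n$ arises from a single tensor decomposition of the natural module, and then checks admissibility family by family. But that assumption is precisely the hard content of the theorem, and it is false as stated: exception (ii) comes from an $S$-subalgebra $B_{p_1}\oplus B_{q_1}\oplus B_{p_2}\oplus B_{q_2}$ of $D_n$ that acts \emph{reducibly} on $\mathbb{C}^{2n}$, decomposing it as a direct sum of two odd-dimensional orthogonal tensor products; such a subalgebra escapes every proper regular subalgebra of $D_n$ (the regular decompositions being of type $D_k\oplus D_{n-k}$) without being irreducible. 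You append this case ad hoc, but your framework gives no argument that these reducible configurations, together with the tensor families, exhaust all $S$-irreducible pairs --- nor that nothing analogous happens in types $B$ and $C$. Without that structural classification, verifying admissibility on your list proves nothing about pairs not on the list.

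The second gap is that your admissibility test is mis-modeled for members of rank greater than one. Admissibility of a subalgebra $\mf{a}$ means there exists an admissible pair $(\Psi,\theta)$ with $\mf{a}=\tilde{\mf{g}_{\theta}}$, where $\tilde{\mf{g}_{\theta}}$ is generated by a \emph{family} of $\mf{sl}_2$-triplets, one per circled node of the diagram, with Cartan subalgebra $\mf{h}_{\theta}$ (Theorem \ref{thm:g-tilde-theta}); by Remark \ref{rmk:characteristic} each irreducible component contributes a characteristic with a single label $2$ at its circled node. Your plan --- ``compute the defining vector of $\mf{a}$, read off its weighted diagram, and check membership in the list of $25$ admissible diagrams'' --- conflates the Dynkin characteristic of a single three-dimensional subalgebra with the multi-circled admissible diagram of $(\Psi,\theta)$; it is a meaningful test only when the candidate admissible member has type $A_1$. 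For members like $B_k$ in $C_n$, $C_{k+1}$ in $D_n$, or $G_2^1$ and $F_4^1$ in the exceptional cases, you would instead need to exhibit $\theta$ and verify that the generated subalgebra is the given one, which your proposal never addresses. Relatedly, your claimed mechanism for the exceptions --- that (i) and (ii) ``emerge precisely when the numerical constraints are violated'' --- is asserted, not argued: what must be shown is that \emph{neither} member of those pairs equals $\tilde{\mf{g}_{\theta}}$ for \emph{any} admissible $(\Psi,\theta)$, a nonexistence statement over all choices of $\theta$, not a failed inequality for one candidate diagram.
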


Therefore, to generate a full list of $S$-irreducible dual pairs in a given simple Lie algebra $\mf{g}$ of any type other than $D_n$, it suffices to consider all admissible diagrams associated with an appropriate $(\Psi, \theta)$, and for each such diagram compute the corresponding subalgebra $\tilde{\mf{g}_{\theta}}$. Finally, compute $\mf{z}_{\mf{g}} ( \tilde{\mf{g}_{\theta}} )$ and check whether $\tilde{\mf{g}_{\theta}} \oplus \mf{z}_{\mf{g}} ( \tilde{\mf{g}_{\theta}} )$ appears in Figure \ref{fig:S-subalgebras}, \ref{fig:S-subalg-classical-1}, or \ref{fig:S-subalg-classical-2} as an $S$-subalgebra of $\mf{g}$. For $\mf{g}$ of type $D_n$, the $S$-irreducibles are those obtained in this way in addition to those of the forms (i) and (ii) in Theorem \ref{thm:8.1} (see \cite[Theorem 7.3.2]{rubenthaler} and \cite[Proposition 7.3.3]{rubenthaler}). (Note that some admissible subalgebras are members of dual pairs that are \textit{not} $S$-irreducible. These dual pairs -- along with the $S$-irreducible dual pairs with an admissible factor -- will be called \textbf{admissible dual pairs}). 

Note that the ``compute $\mf{z}_{\mf{g}}(\tilde{\mf{g}_{\theta}})$" step in this process is very nontrivial. Fortunately, the type of many of these centralizers can be deduced from \cite{S-K}. Using this source, Rubenthaler carries out this process to classify the admissible dual pairs in the classical simple Lie algebras (see \cite[Sections 6.3--6.7]{rubenthaler}) and in the exceptional Lie algebras (see \cite[Sections 6.8--6.12]{rubenthaler}). These dual pairs will be included later on in the paper (see Section \ref{sec:lists-classical} for the classical simple Lie algebras and Section \ref{sec:lists} for the exceptional Lie algebras).

\section{Some of the dual pairs in the classical simple Lie algebras} \label{sec:lists-classical}

Using what we have done so far, we can classify all of the dual pairs in simple Lie algebras coming from maximal regular subalgebras of maximal rank or having an admissible factor (which includes \textit{all} $S$-irreducible dual pairs for algebras of types other than $D_n$). 

Recall that the dual pairs coming from maximal regular subalgebras of maximal rank are summarized in Table \ref{table:max-reg-dps}. In this section, we follow Rubenthaler in classifying the admissible dual pairs in the classical simple Lie algebras. Additionally, we explicitly write out these dual pairs (including index and ``primes") for the classical simple Lie algebras up to rank 6. These explicit dual pairs will be useful later on for our classification of dual pairs in the exceptional Lie algebras. Note that in the below tables and in what follows, whenever we encounter subalgebras of types $B_1$, $C_1$, $D_1$, $C_2$, or $D_3$, we will instead write $A_1$, $B_2$, or $A_3$ so as to make these table entries more easily findable in Appendices A and B.

While we follow Rubenthaler's classification approach in this section for notational consistency, it is worth noting that the dual pairs in the classical simple Lie algebras can also be understood more comprehensively using elementary methods. See \cite{myarxiv} for more details.

\subsection{Type \texorpdfstring{$A_n$}{A}} \label{subsec:A_n}

Recall that the complex Lie algebra of type $A_n$ has no maximal-rank regular subalgebras. As we will soon see from Theorem \ref{thm:non-S-irred}, this means that all semisimple dual pairs in $A_n$ are $S$-irreducible. By Theorem \ref{thm:8.1}, any such dual pair has at least one factor that is an admissible subalgebra of $A_n$. Moreover, it turns out that all admissible subalgebras of $A_n$ are $S$-irreducible. Consequently, we obtain exactly the dual pairs outlined below (with the first factor having the following admissible diagram), where $n=(k+1)p-1$, $k \geq 1$, $n \geq 1$, and $p \geq 2$:

\vspace{1em}

\begin{center}
\begin{tikzpicture}[scale=1.25]
    \node (dot1) at (0,0) [draw, circle, inner sep=1.5pt, fill=black] {};
    \node (dot2) at (.5,0) [draw, circle, inner sep=1.5pt, fill=black] {};
    \draw (dot1) -- (dot2);
    \node[below] at (0,-.15) {$\alpha_1$};
    
    \node (ellipsis) at (.9,0) {$\ldots$};
    
    \node (dot3) at (1.3,0) [draw, circle, inner sep=1.5pt, fill=black] {};
    \node (dot4) at (1.8,0) [draw, circle, inner sep=.5pt, fill=black] {};
    \node[draw, circle, inner sep=0pt, minimum size=8pt] (circle4) at (1.8,0) {};
    \node (dot5) at (2.3,0) [draw, circle, inner sep=1.5pt, fill=black] {};
    \draw (dot3) -- (circle4.west);
    \draw (circle4.east) -- (dot5);
    \node[below] at (circle4.south) {$\alpha_p$};

    \node (ellipsis2) at (2.7,0) {$\ldots$};

    \node (dot6) at (3.1,0) [draw, circle, inner sep=1.5pt, fill=black] {};
    \node (dot7) at (3.6,0) [draw, circle, inner sep=.5pt, fill=black] {};
    \node[draw, circle, inner sep=0pt, minimum size=8pt] (circle7) at (3.6,0) {};
    \node (dot8) at (4.1,0) [draw, circle, inner sep=1.5pt, fill=black] {};
    \draw (dot6) -- (circle7.west);
    \draw (circle7.east) -- (dot8);
    \node[below] at (circle7.south) {$\alpha_{2p}$};

    \node (ellipsis3) at (4.5,0) {$\ldots$};
    \node (ellipsis4) at (4.95,0) {$\ldots$};
    \node (ellipsis5) at (5.4,0) {$\ldots$};

    \node (dot9) at (5.8,0) [draw, circle, inner sep=1.5pt, fill=black] {};
    \node (dot10) at (6.3,0) [draw, circle, inner sep=.5pt, fill=black] {};
    \node[draw, circle, inner sep=0pt, minimum size=8pt] (circle10) at (6.3,0) {};
    \node (dot11) at (6.8,0) [draw, circle, inner sep=1.5pt, fill=black] {};
    \draw (dot9) -- (circle10.west);
    \draw (circle10.east) -- (dot11);
    \node[below] at (circle10.south) {$\alpha_{kp}$};

    \node (ellipsis3) at (7.2,0) {$\ldots$};

    \node (dot12) at (7.6,0) [draw, circle, inner sep=1.5pt, fill=black] {};
    \node (dot13) at (8.1,0) [draw, circle, inner sep=1.5pt, fill=black] {};
    \draw (dot12) -- (dot13);
    \node[below] at (8.1,-.15) {$\alpha_n$};
\end{tikzpicture}
\end{center}

\begin{itemize}
\item \underline{$S$-irreducible:} 
\begin{itemize}
\item $(A_k, A_{p-1})$ 
\end{itemize}
\end{itemize}

The index of these subalgebras can be computed by hand or can be found in \cite[Table XII]{S-subalgebras} for $n \leq 6$. Putting this all together, we get that the following is a complete list of dual pairs in $A_n$ (for $3 \leq n \leq 6$).

\begin{center}
\begin{table}[H]
\begin{tabular}{| c | c || c | c || c | c || c | c |} \hline
Rank & Dual Pair & Rank & Dual Pair & Rank & Dual Pair & Rank & Dual Pair \\ \hline
3 & $(A_1^2, A_1^2)$ & 4 & N/A & 5 & $(A_1^3, A_2^2)$ & 6 & N/A  \\ \hline
\end{tabular}
\caption{A complete list of semisimple dual pairs in $A_n$ (for $3 \leq n \leq 6$).}
\label{table:dps-in-An}
\end{table}
\end{center}

\subsection{Type \texorpdfstring{$B_n$}{B}}

Recall that the complex Lie algebra of type $B_n$ has $(A_1, A_1 \oplus B_{n-2})$ and $(B_{n-k}, D_k)$ ($k=2,3,\ldots , n$) as dual pairs arising from maximal regular subalgebras of maximal rank (see Table \ref{table:max-reg-dps}). By Theorem \ref{thm:8.1}, any $S$-irreducible dual pair in $B_n$ has at least one factor that is an admissible subalgebra of $B_n$. Additionally, it turns out that \textit{some} of the non-$S$-irreducible dual pairs in $B_n$ have a factor that is an admissible subalgebra of $B_n$. In particular, as we will see below, the $B_{n-k}$ factor of the dual pairs $(B_{n-k}, D_k)$ ($k=2,3,\ldots , n$) is an admissible subalgebra of $B_n$. In total, for type $B_n$ (with $n \geq 4$), the admissible dual pairs are outlined below (with the first factor having the following admissible diagram), where $n \geq kp$, $(2k+1)p \leq 2n+1$, $p \geq 1$, and $p \neq 2$:

\vspace{1em}

\begin{center}
\begin{tikzpicture}[scale=1.25]
    \node (dot1) at (0,0) [draw, circle, inner sep=1.5pt, fill=black] {};
    \node (dot2) at (.5,0) [draw, circle, inner sep=1.5pt, fill=black] {};
    \draw (dot1) -- (dot2);
    \node[below] at (0,-.15) {$\alpha_1$};
    
    \node (ellipsis) at (.9,0) {$\ldots$};
    
    \node (dot3) at (1.3,0) [draw, circle, inner sep=1.5pt, fill=black] {};
    \node (dot4) at (1.8,0) [draw, circle, inner sep=.5pt, fill=black] {};
    \node[draw, circle, inner sep=0pt, minimum size=8pt] (circle4) at (1.8,0) {};
    \node (dot5) at (2.3,0) [draw, circle, inner sep=1.5pt, fill=black] {};
    \draw (dot3) -- (circle4.west);
    \draw (circle4.east) -- (dot5);
    \node[below] at (circle4.south) {$\alpha_p$};

    \node (ellipsis2) at (2.7,0) {$\ldots$};

    \node (dot6) at (3.1,0) [draw, circle, inner sep=1.5pt, fill=black] {};
    \node (dot7) at (3.6,0) [draw, circle, inner sep=.5pt, fill=black] {};
    \node[draw, circle, inner sep=0pt, minimum size=8pt] (circle7) at (3.6,0) {};
    \node (dot8) at (4.1,0) [draw, circle, inner sep=1.5pt, fill=black] {};
    \draw (dot6) -- (circle7.west);
    \draw (circle7.east) -- (dot8);
    \node[below] at (circle7.south) {$\alpha_{2p}$};

    \node (ellipsis3) at (4.5,0) {$\ldots$};
    \node (ellipsis4) at (4.95,0) {$\ldots$};
    \node (ellipsis5) at (5.4,0) {$\ldots$};

    \node (dot9) at (5.8,0) [draw, circle, inner sep=1.5pt, fill=black] {};
    \node (dot10) at (6.3,0) [draw, circle, inner sep=.5pt, fill=black] {};
    \node[draw, circle, inner sep=0pt, minimum size=8pt] (circle10) at (6.3,0) {};
    \node (dot11) at (6.8,0) [draw, circle, inner sep=1.5pt, fill=black] {};
    \draw (dot9) -- (circle10.west);
    \draw (circle10.east) -- (dot11);
    \node[below] at (circle10.south) {$\alpha_{kp}$};

    \node (ellipsis3) at (7.2,0) {$\ldots$};

    \node (dot12) at (7.6,0) [draw, circle, inner sep=1.5pt, fill=black] {};
    \node (dot13) at (8.1,0) [draw, circle, inner sep=1.5pt, fill=black] {};
    \draw (dot12.north) -- (dot13.north);
    \draw (dot12.south) -- (dot13.south);
    \draw (7.92,0) -- (7.78,.14);
    \draw (7.92,0) -- (7.78,-.14);
    \node[below] at (8.1,-.15) {$\alpha_n$};
\end{tikzpicture}
\end{center}

\begin{itemize}
\item \underline{$S$-irreducible:} 
	\begin{enumerate}[label = (\alph*)]
		\item $(B_k, B_{\ell})$: $p=2\ell + 1$; $k,\ell \geq 1$; $2k\ell + k + \ell = n$.  
	\end{enumerate}
\item \underline{Non-$S$-irreducible:} 
	\begin{enumerate}[label = (\roman*)]
		\item $(B_k, D_{\ell} \oplus B_{n-2k\ell - \ell})$: $p = 2\ell$; $k \geq 1$; $\ell \geq 2$; $2k\ell + \ell \leq n$.
		\item $(B_k, B_{\ell} \oplus D_{n-2k\ell - k - \ell})$: $p = 2\ell + 1$; $k \geq 1$; $\ell \geq 0$; $2k\ell + k + \ell + 1 < n$.
	\end{enumerate}
\end{itemize}

\vspace{1em}

The index of these subalgebras can be computed by hand or can be found in \cite[Table XII]{S-subalgebras} for $n \leq 6$. Putting this all together, we get the following partial list of dual pairs in $B_n$ (for $2 \leq n \leq 6$), including \textit{all} $S$-irreducible dual pairs and \textit{all} dual pairs coming from maximal regular subalgebras of maximal rank. 

\begin{center}
\begin{table}[H]
\begin{tabular}{| c | c || c | c || c | c || c | c |} \hline
Rank & Dual Pair & Rank & Dual Pair & Rank & Dual Pair & Rank & Dual Pair  \\ \hline
2 & $(A_1, A_1)$ & 4 & $(A_1, A_1 \oplus B_2^{1''})$  & 5 & $(A_1, A_1 \oplus B_3)$ & 6 & $(A_1, A_1 \oplus B_4)$ \\ \cline{1-2}
3 & $(\tilde{A_1}, 2A_1)$ & 4(ii) & $(A_1^{2''}, A_3^{1''})$ & 5(ii) & $(A_1^{2''}, D_4)$ & 6(i) & $(A_1^8, 2A_1^{3'})$ \\
3 & $(A_1, A_1 \oplus \tilde{A_1})$ & 4(ii) & $(B_2^{1''}, 2A_1)$ & 5(ii) & $(B_2^{1''}, A_3^{1''})$ & 6(ii) & $(A_1^{2''}, D_5)$ \\
 & & 4(a) & $(A_1^6, A_1^6)$ & 5(ii) & $(B_3, 2A_1)$ & 6(ii) & $(B_2^{1''}, D_4)$ \\
 & & & & & & 6(ii) & $(B_3, A_3^{1''})$ \\
 & & & & & & 6(ii) & $(B_4, 2A_1)$ \\ 
 & & & & & & 6(ii) & $(A_1^6, A_1^6 \oplus 2A_1)$ \\ \hline
\end{tabular}
\caption{A \textit{partial} list of dual pairs in $B_n$ (for $2 \leq n \leq 6$).}
\label{table:dps-in-Bn}
\end{table}
\end{center}

To find the remaining non-$S$-irreducible dual pairs, one can carry out the process described in Section \ref{sec:non-S-irred}.

\subsection{Type \texorpdfstring{$C_n$}{C}}

Recall that the complex Lie algebra of type $C_n$ has $(C_k, C_{n-k})$ ($k=1,2,\ldots , n-1$) as dual pairs arising from maximal regular subalgebras (see Table \ref{table:max-reg-dps}). By Theorem \ref{thm:8.1}, any $S$-irreducible dual pair in $C_n$ has at least one factor that is an admissible subalgebra of $C_n$. Additionally, it turns out that \textit{some} of the non-$S$-irreducible dual pairs in $C_n$ have a factor that is an admissible subalgebra of $C_n$. For type $C_n$ (with $n \geq 3$), the admissible dual pairs arise from two different admissible diagrams, as outlined below. \\

\noindent \textit{\underline{Type 1:}} We require that $p = 2 \ell$ and that $(2k+1) \ell \leq n$.

\vspace{1em}

\begin{center}
\begin{tikzpicture}[scale=1.25]
    \node (dot1) at (0,0) [draw, circle, inner sep=1.5pt, fill=black] {};
    \node (dot2) at (.5,0) [draw, circle, inner sep=1.5pt, fill=black] {};
    \draw (dot1) -- (dot2);
    \node[below] at (0,-.15) {$\alpha_1$};
    
    \node (ellipsis) at (.9,0) {$\ldots$};
    
    \node (dot3) at (1.3,0) [draw, circle, inner sep=1.5pt, fill=black] {};
    \node (dot4) at (1.8,0) [draw, circle, inner sep=.5pt, fill=black] {};
    \node[draw, circle, inner sep=0pt, minimum size=8pt] (circle4) at (1.8,0) {};
    \node (dot5) at (2.3,0) [draw, circle, inner sep=1.5pt, fill=black] {};
    \draw (dot3) -- (circle4.west);
    \draw (circle4.east) -- (dot5);
    \node[below] at (circle4.south) {$\alpha_p$};

    \node (ellipsis2) at (2.7,0) {$\ldots$};

    \node (dot6) at (3.1,0) [draw, circle, inner sep=1.5pt, fill=black] {};
    \node (dot7) at (3.6,0) [draw, circle, inner sep=.5pt, fill=black] {};
    \node[draw, circle, inner sep=0pt, minimum size=8pt] (circle7) at (3.6,0) {};
    \node (dot8) at (4.1,0) [draw, circle, inner sep=1.5pt, fill=black] {};
    \draw (dot6) -- (circle7.west);
    \draw (circle7.east) -- (dot8);
    \node[below] at (circle7.south) {$\alpha_{2p}$};

    \node (ellipsis3) at (4.5,0) {$\ldots$};
    \node (ellipsis4) at (4.95,0) {$\ldots$};
    \node (ellipsis5) at (5.4,0) {$\ldots$};

    \node (dot9) at (5.8,0) [draw, circle, inner sep=1.5pt, fill=black] {};
    \node (dot10) at (6.3,0) [draw, circle, inner sep=.5pt, fill=black] {};
    \node[draw, circle, inner sep=0pt, minimum size=8pt] (circle10) at (6.3,0) {};
    \node (dot11) at (6.8,0) [draw, circle, inner sep=1.5pt, fill=black] {};
    \draw (dot9) -- (circle10.west);
    \draw (circle10.east) -- (dot11);
    \node[below] at (circle10.south) {$\alpha_{kp}$};

    \node (ellipsis3) at (7.2,0) {$\ldots$};

    \node (dot12) at (7.6,0) [draw, circle, inner sep=1.5pt, fill=black] {};
    \node (dot13) at (8.1,0) [draw, circle, inner sep=1.5pt, fill=black] {};
    \draw (dot12.north) -- (dot13.north);
    \draw (dot12.south) -- (dot13.south);
    \draw (7.78,0) -- (7.92,.14);
    \draw (7.78,0) -- (7.92,-.14);
    \node[below] at (8.1,-.15) {$\alpha_n$};
\end{tikzpicture}
\end{center}

\begin{itemize}
\item \underline{$S$-irreducible:} 
	\begin{enumerate}[label = (\alph*)]
	\item $(B_k, C_{\ell})$: $k,\ell \geq 1$; $(2k+1)\ell = n$.
	\end{enumerate}
\item \underline{Non-$S$-irreducible:}
	\begin{enumerate}[label = (\roman*)]
	\item $(B_k, C_{\ell} \oplus C_{n-2k\ell - \ell})$: $k \geq 1$; $\ell \geq 1$; $(2k+1)\ell < n$.
	\end{enumerate}
\end{itemize}

\vspace{1em}

\noindent \textit{\underline{Type 2:}} We require that $n = (k+1)p$, $k \geq 0$, and $p \geq 1$.

\vspace{1em}

\begin{center}
\begin{tikzpicture}[scale=1.25]
    \node (dot1) at (0,0) [draw, circle, inner sep=1.5pt, fill=black] {};
    \node (dot2) at (.5,0) [draw, circle, inner sep=1.5pt, fill=black] {};
    \draw (dot1) -- (dot2);
    \node[below] at (0,-.15) {$\alpha_1$};
    
    \node (ellipsis) at (.9,0) {$\ldots$};
    
    \node (dot3) at (1.3,0) [draw, circle, inner sep=1.5pt, fill=black] {};
    \node (dot4) at (1.8,0) [draw, circle, inner sep=.5pt, fill=black] {};
    \node[draw, circle, inner sep=0pt, minimum size=8pt] (circle4) at (1.8,0) {};
    \node (dot5) at (2.3,0) [draw, circle, inner sep=1.5pt, fill=black] {};
    \draw (dot3) -- (circle4.west);
    \draw (circle4.east) -- (dot5);
    \node[below] at (circle4.south) {$\alpha_p$};

    \node (ellipsis2) at (2.7,0) {$\ldots$};

    \node (dot6) at (3.1,0) [draw, circle, inner sep=1.5pt, fill=black] {};
    \node (dot7) at (3.6,0) [draw, circle, inner sep=.5pt, fill=black] {};
    \node[draw, circle, inner sep=0pt, minimum size=8pt] (circle7) at (3.6,0) {};
    \node (dot8) at (4.1,0) [draw, circle, inner sep=1.5pt, fill=black] {};
    \draw (dot6) -- (circle7.west);
    \draw (circle7.east) -- (dot8);
    \node[below] at (circle7.south) {$\alpha_{2p}$};

    \node (ellipsis3) at (4.5,0) {$\ldots$};
    \node (ellipsis4) at (4.95,0) {$\ldots$};
    \node (ellipsis5) at (5.4,0) {$\ldots$};

    \node (dot9) at (5.8,0) [draw, circle, inner sep=1.5pt, fill=black] {};
    \node (dot10) at (6.3,0) [draw, circle, inner sep=.5pt, fill=black] {};
    \node[draw, circle, inner sep=0pt, minimum size=8pt] (circle10) at (6.3,0) {};
    \node (dot11) at (6.8,0) [draw, circle, inner sep=1.5pt, fill=black] {};
    \draw (dot9) -- (circle10.west);
    \draw (circle10.east) -- (dot11);
    \node[below] at (circle10.south) {$\alpha_{kp}$};

    \node (ellipsis3) at (7.2,0) {$\ldots$};

    \node (dot12) at (7.6,0) [draw, circle, inner sep=1.5pt, fill=black] {};
    \node (dot13) at (8.1,0) [draw, circle, inner sep=.5pt, fill=black] {};
    \node[draw, circle, inner sep=0pt, minimum size=8pt] (circle13) at (8.1,0) {};
    \node[below] at (circle13.south) {$\alpha_{n}$};
    \draw (dot12.north) -- (8,.07);
    \draw (dot12.south) -- (8,-.07);
    \draw (7.78,0) -- (7.92,.14);
    \draw (7.78,0) -- (7.92,-.14);
\end{tikzpicture}
\end{center}

\begin{itemize}
\item \underline{$S$-irreducible:}
	\begin{enumerate}[label=(\alph*)]
	\item $(C_{k+1}, B_{\ell})$: $p = 2\ell+1$; $k \geq 0$; $\ell \geq 1$.
	\item $(C_{k+1}, D_{\ell})$: $p = 2\ell$; $k \geq 0$; $\ell \geq 2$.
	\end{enumerate}
\end{itemize}

\vspace{1em}

The index of these subalgebras can be computed by hand or can be found in \cite[Table XII]{S-subalgebras} for $n \leq 6$. \\

Putting this all together, we get the following partial list of dual pairs in $C_n$ (for $3 \leq n \leq 6$), including \textit{all} $S$-irreducible dual pairs and \textit{all} dual pairs coming from maximal regular subalgebras of maximal rank.

\begin{center}
\begin{table}[H]
\begin{tabular}{| c | c || c | c || c | c || c | c |} \hline
Rank & Dual Pair & Rank & Dual Pair & Rank & Dual Pair & Rank & Dual Pair \\ \hline
3 & $(A_1, B_2)$ & 4 & $(A_1, C_3)$ & 5 & $(A_1, C_4)$ & 6 & $(A_1, C_5)$ \\
3(1a) & $(A_1^8, A_1^3)$ & 4 & $(B_2, B_2)$ & 5 & $(B_2, C_3)$ & 6 & $(B_2, C_4)$ \\ 
 &  & 4(1i) & $(A_1^8, A_1^3 \oplus A_1)$ & 5(1i) & $(A_1^8, A_1^3 \oplus B_2)$ & 6 & $(C_3, C_3)$ \\
 &  & 4(2b) & $(A_1^4, 2A_1^4)$ & 5(1a) & $(B_2^4, A_1^5)$ & 6(1i) & $(A_1^8, A_1^3 \oplus C_3)$ \\
 &  &  &  & & & 6(1i) & $(B_2^4, A_1^5 \oplus A_1)$ \\
 &  &  &  & & & 6(1a) & $(A_1^{16}, B_2^3)$ \\ 
 &  & &  & & & 6(2b) & $(A_1^6, A_3^4)$ \\ \hline
\end{tabular}
\caption{A \textit{partial} list of dual pairs in $C_n$ (for $3 \leq n \leq 6$).}
\label{table:dps-in-Cn}
\end{table}
\end{center}

To find the remaining non-$S$-irreducible dual pairs, one can carry out the process described in Section \ref{sec:non-S-irred}.

\subsection{Type \texorpdfstring{$D_n$}{D}} \label{subsec:D_n}

Recall that the complex Lie algebra of type $D_n$ has $(A_1, A_1 \oplus D_{n-2})$ and $(D_k, D_{n-k})$ ($k=2,3,\ldots , n-2$) as dual pairs arising from maximal regular subalgebras (see Table \ref{table:max-reg-dps}). For type $D_n$, Theorem \ref{thm:8.1} suggests that there are certain $S$-irreducible dual pairs in $D_n$ that do \textit{not} have admissible factors. In particular, $D_n$ has $S$-irreducible dual pairs of type $(D_k, D_{\ell})$, where $n = 2k\ell$ and $k,\ell \geq 2$ \cite[Theorem 7.3.2]{rubenthaler}. Additionally, $D_n$ has $S$-irreducible dual pairs of type $(B_{p_1} \oplus B_{q_1}, B_{p_2} \oplus B_{q_2})$, where $(B_{p_1}, B_{p_2})$ is an $S$-irreducible dual pair in $B_p$ (with $2p+1 = (2p_1 + 1)(2p_2+1)$), where $(B_{q_1}, B_{q_2})$ is an $S$-irreducible dual pair in $B_q$ (with $2q+1 = (2q_1 + 1)(2q_2+1)$), and where $n = p + q + 1$ \cite[Proposition 7.3.3]{rubenthaler}. However, note that neither of these types of dual pairs appear for $n \leq 6$.

For all other $S$-irreducible dual pairs in $D_n$, Theorem \ref{thm:8.1} implies that at least one factor of the pair is an admissible subalgebra of $D_n$. Additionally, it turns out that \textit{some} of the non-$S$-irreducible dual pairs in $D_n$ have a factor that is an admissible subalgebra of $D_n$. For type $D_n$ (with $n \geq 4$), the admissible dual pairs arise from two different admissible diagrams, as outlined below. \\

\noindent \textit{\underline{Type 1:}} We require that $n \geq kp+2$ and that $(2k+1)p \leq 2n$.

\vspace{1em}

\begin{center}
\begin{tikzpicture}[scale=1.25]
    \node (dot1) at (0,0) [draw, circle, inner sep=1.5pt, fill=black] {};
    \node (dot2) at (.5,0) [draw, circle, inner sep=1.5pt, fill=black] {};
    \draw (dot1) -- (dot2);
    \node[below] at (0,-.15) {$\alpha_1$};
    
    \node (ellipsis) at (.9,0) {$\ldots$};
    
    \node (dot3) at (1.3,0) [draw, circle, inner sep=1.5pt, fill=black] {};
    \node (dot4) at (1.8,0) [draw, circle, inner sep=.5pt, fill=black] {};
    \node[draw, circle, inner sep=0pt, minimum size=8pt] (circle4) at (1.8,0) {};
    \node (dot5) at (2.3,0) [draw, circle, inner sep=1.5pt, fill=black] {};
    \draw (dot3) -- (circle4.west);
    \draw (circle4.east) -- (dot5);
    \node[below] at (circle4.south) {$\alpha_p$};

    \node (ellipsis2) at (2.7,0) {$\ldots$};

    \node (dot6) at (3.1,0) [draw, circle, inner sep=1.5pt, fill=black] {};
    \node (dot7) at (3.6,0) [draw, circle, inner sep=.5pt, fill=black] {};
    \node[draw, circle, inner sep=0pt, minimum size=8pt] (circle7) at (3.6,0) {};
    \node (dot8) at (4.1,0) [draw, circle, inner sep=1.5pt, fill=black] {};
    \draw (dot6) -- (circle7.west);
    \draw (circle7.east) -- (dot8);
    \node[below] at (circle7.south) {$\alpha_{2p}$};

    \node (ellipsis3) at (4.5,0) {$\ldots$};
    \node (ellipsis4) at (4.95,0) {$\ldots$};
    \node (ellipsis5) at (5.4,0) {$\ldots$};

    \node (dot9) at (5.8,0) [draw, circle, inner sep=1.5pt, fill=black] {};
    \node (dot10) at (6.3,0) [draw, circle, inner sep=.5pt, fill=black] {};
    \node[draw, circle, inner sep=0pt, minimum size=8pt] (circle10) at (6.3,0) {};
    \node (dot11) at (6.8,0) [draw, circle, inner sep=1.5pt, fill=black] {};
    \draw (dot9) -- (circle10.west);
    \draw (circle10.east) -- (dot11);
    \node[below] at (circle10.south) {$\alpha_{kp}$};

    \node (ellipsis3) at (7.2,0) {$\ldots$};

    \node (dot12) at (7.6,0) [draw, circle, inner sep=1.5pt, fill=black] {};
    \node (dot13) at (8.1,0) [draw, circle, inner sep=1.5pt, fill=black] {};
    \node (dot14) at (8.6,.35) [draw, circle, inner sep=1.5pt, fill=black] {};
    \node (dot15) at (8.6,-.35) [draw, circle, inner sep=1.5pt, fill=black] {};  
    \draw (dot12) -- (dot13);
    \draw (dot13) -- (dot14);
    \draw (dot13) -- (dot15);
\end{tikzpicture}
\end{center}

\begin{itemize}
\item \underline{$S$-irreducible:}
	\begin{enumerate}[label=(\alph*)]
	\item $(B_k, D_{\ell})$: $p=2\ell$; $k \geq 1$; $\ell \geq 2$; $(2k+1)\ell = n$.
	\item $(B_k, B_{\ell} \oplus B_{n-2k\ell-k-\ell -1})$: $p=2\ell + 1$; $k \geq 1$; $\ell \geq 0$; $2k\ell + k + \ell < n$. 
	\end{enumerate}
\item \underline{Non-$S$-irreducible:} 
	\begin{enumerate}[label = (\roman*)]
	\item $(B_k, D_{\ell} \oplus D_{n-2k\ell - \ell})$: $p=2\ell$; $k \geq 1$; $\ell \geq 3$; $(2k+1)\ell + 1 < n$.
	\end{enumerate}
\end{itemize}

\vspace{1em}

\noindent \textit{\underline{Type 2:}} We require that $n = (k+1)p$ and that $p = 2\ell$. 

\vspace{1em}

\begin{center}
\begin{tikzpicture}[scale=1.25]
    \node (dot1) at (0,0) [draw, circle, inner sep=1.5pt, fill=black] {};
    \node (dot2) at (.5,0) [draw, circle, inner sep=1.5pt, fill=black] {};
    \draw (dot1) -- (dot2);
    \node[below] at (0,-.15) {$\alpha_1$};
    
    \node (ellipsis) at (.9,0) {$\ldots$};
    
    \node (dot3) at (1.3,0) [draw, circle, inner sep=1.5pt, fill=black] {};
    \node (dot4) at (1.8,0) [draw, circle, inner sep=.5pt, fill=black] {};
    \node[draw, circle, inner sep=0pt, minimum size=8pt] (circle4) at (1.8,0) {};
    \node (dot5) at (2.3,0) [draw, circle, inner sep=1.5pt, fill=black] {};
    \draw (dot3) -- (circle4.west);
    \draw (circle4.east) -- (dot5);
    \node[below] at (circle4.south) {$\alpha_p$};

    \node (ellipsis2) at (2.7,0) {$\ldots$};

    \node (dot6) at (3.1,0) [draw, circle, inner sep=1.5pt, fill=black] {};
    \node (dot7) at (3.6,0) [draw, circle, inner sep=.5pt, fill=black] {};
    \node[draw, circle, inner sep=0pt, minimum size=8pt] (circle7) at (3.6,0) {};
    \node (dot8) at (4.1,0) [draw, circle, inner sep=1.5pt, fill=black] {};
    \draw (dot6) -- (circle7.west);
    \draw (circle7.east) -- (dot8);
    \node[below] at (circle7.south) {$\alpha_{2p}$};

    \node (ellipsis3) at (4.5,0) {$\ldots$};
    \node (ellipsis4) at (4.95,0) {$\ldots$};
    \node (ellipsis5) at (5.4,0) {$\ldots$};

    \node (dot9) at (5.8,0) [draw, circle, inner sep=1.5pt, fill=black] {};
    \node (dot10) at (6.3,0) [draw, circle, inner sep=.5pt, fill=black] {};
    \node[draw, circle, inner sep=0pt, minimum size=8pt] (circle10) at (6.3,0) {};
    \node (dot11) at (6.8,0) [draw, circle, inner sep=1.5pt, fill=black] {};
    \draw (dot9) -- (circle10.west);
    \draw (circle10.east) -- (dot11);
    \node[below] at (circle10.south) {$\alpha_{kp}$};

    \node (ellipsis3) at (7.2,0) {$\ldots$};

    \node (dot12) at (7.6,0) [draw, circle, inner sep=1.5pt, fill=black] {};
    \node (dot13) at (8.1,0) [draw, circle, inner sep=1.5pt, fill=black] {};
    \node[draw, circle, inner sep=0pt, minimum size=8pt, fill=white] (circle14) at (8.6,.35) {};
    \node (dot15) at (8.6,-.35) [draw, circle, inner sep=1.5pt, fill=black] {};  
    \draw (dot12) -- (dot13);
    \draw (dot13) -- (circle14);
    \node (dot14) at (8.6,.35) [draw, circle, inner sep=.5pt, fill=black] {};
    \draw (dot13) -- (dot15);
\end{tikzpicture}
\end{center}

\begin{itemize}
\item \underline{$S$-irreducible:}
	\begin{enumerate}[label = (\alph*)]
	\item $(C_{k+1}, C_{\ell})$: $k \geq 0$; $\ell \geq 1$.
	\end{enumerate}
\end{itemize}

\vspace{1em}

The index of these subalgebras can be computed by hand or can be found in \cite[Table XII]{S-subalgebras} for $n \leq 6$. \\

Putting this all together, we get the following partial list of dual pairs in $D_n$ (for $4 \leq n \leq 6$), including \textit{all} $S$-irreducible dual pairs and \textit{all} dual pairs coming from maximal regular subalgebras of maximal rank.

\begin{table}[H]
\begin{tabular}{| c | c || c | c || c | c |} \hline
Rank & Dual Pair & Rank & Dual Pair & Rank & Dual Pair \\ \hline
4 & $(A_1, 3A_1)$ & 5 & $(A_1, A_1 \oplus A_3^{''})$ & 6 & $(A_1, A_1 \oplus D_4)$ \\
4 & $(2A_1, 2A_1)$ & 5 & $(2A_1, A_3^{''})$ & 6 & $(2A_1, D_4)$ \\
4(1b) & $(A_1^{2'}, B_2^{1'})$ & 5(1b) & $(A_1^{2''}, B_3^1)$ & 6 & $(A_3^{''}, A_3^{''})$ \\ 
4(1b) & $(A_1^{2''}, B_2^{1''})$ & 5(1b) & $(B_2^{1''}, B_2^{1''})$ & 6(1a) & $(A_1^8, A_1^{3'} \oplus A_1^{3''})$ \\
4(1b) & $(A_1^{2'''}, B_2^{1'''})$ & 5(1b) & $(A_1^6, A_1^6)$ & 6(1b) & $(A_1^{2''}, B_4^1)$ \\ 
& & & & 6(1b) & $(B_2^{1''}, B_3^1)$ \\
& & & & 6(1b) & $(A_1^6, A_1^6 \oplus A_1^{2''})$ \\
& & & & 6(2a) & $(A_1^{3'}, C_3^{1''})$ \\ 
& & & & 6(2a) & $(A_1^{3''}, C_3^{1'})$ \\ \hline
\end{tabular}
\caption{A \textit{partial} list of dual pairs in $D_n$ (for $4 \leq n \leq 6$).}
\label{table:dps-in-Dn}
\end{table}

To find the remaining non-$S$-irreducible dual pairs, one can carry out the process described in Section \ref{sec:non-S-irred}. 

\section{Non-\texorpdfstring{$S$}{S}-irreducible dual pairs} \label{sec:non-S-irred}

So far, we have established how to classify dual pairs coming from maximal regular subalgebras of maximal rank (see Table \ref{table:max-reg-dps}) and how to classify all dual pairs with an admissible factor (see Section \ref{sec:S-irred}). In this section, we discuss how to classify the remaining non-$S$-irreducible dual pairs.

\begin{thm}[{\cite[Theorem 5.12]{rubenthaler}}] \label{thm:non-S-irred}
Let $(\mf{a},\mf{b})$ be a semisimple non-$S$-irreducible dual pair in a semisimple $\mf{g}$. Then there exists a maximal-rank regular semisimple subalgebra $\mf{u}$ in $\mf{g}$ such that $\mf{a} \oplus \mf{b} \subset \mf{u} \subset \mf{g}$ and such that $\mf{a} \oplus \mf{b}$ is an $S$-subalgebra of $\mf{u}$. 
\end{thm}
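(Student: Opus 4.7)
The plan is to produce the desired subalgebra $\mf{u}$ by first enlarging a given proper regular subalgebra containing $\mf{a} \oplus \mf{b}$ to one that is maximal-rank and semisimple, and then taking a minimal such enlargement.

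By hypothesis, $\mf{a} \oplus \mf{b}$ sits inside a proper regular subalgebra $\mf{r} \subsetneq \mf{g}$, and $\mf{r}$ is normalized by some Cartan $\mf{h}$ of $\mf{g}$. Let $\Phi$ denote the root system of $(\mf{g},\mf{h})$ and let $\Phi_{\mf{r}} \subsetneq \Phi$ be the closed subset of roots occurring in $\mf{r}$. I would enlarge $\Phi_{\mf{r}}$ to a closed subsystem $\Phi'$ of $\Phi$ that is still a proper subset of $\Phi$ but spans the same Euclidean space, by iterating the Borel--de Siebenthal extended-Dynkin-diagram construction (cf.~the analysis leading up to Lemma~\ref{lemma:elementary-ops}). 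Setting $\mf{u}_0 := \mathrm{span}_{\mathbb{C}}((\Phi')^{\vee}) \oplus \bigoplus_{\alpha \in \Phi'} \mf{g}_{\alpha}$ then yields a proper maximal-rank regular semisimple subalgebra of $\mf{g}$ containing $\mf{a} \oplus \mf{b}$.

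Granted that such enlargements exist, let $\mf{u}$ be one of minimum dimension. To prove $\mf{a} \oplus \mf{b}$ is an $S$-subalgebra of $\mf{u}$, suppose for contradiction that $\mf{a} \oplus \mf{b} \subset \mf{s} \subsetneq \mf{u}$ for some regular subalgebra $\mf{s}$ of $\mf{u}$. Because $\mathrm{rank}(\mf{u}) = \mathrm{rank}(\mf{g})$, any Cartan of $\mf{u}$ normalizing $\mf{s}$ is also a Cartan of $\mf{g}$, so $\mf{s}$ is regular in $\mf{g}$. Re-applying the enlargement procedure \emph{inside} $\mf{u}$ (with $\mf{u}$ in place of $\mf{g}$) produces a proper maximal-rank regular semisimple subalgebra $\mf{u}' \subsetneq \mf{u}$ containing $\mf{a} \oplus \mf{b}$. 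Since $\mf{u}'$ has maximal rank in $\mf{u}$ and $\mf{u}$ has maximal rank in $\mf{g}$, $\mf{u}'$ has maximal rank in $\mf{g}$, and it is regular there by the same Cartan-normalization argument. This contradicts the minimality of $\mf{u}$.

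The main obstacle is the enlargement step, which can fail when $\mf{g}$ has a simple factor of type $A$: no proper closed subsystem of an $A_n$ root system has full rank. I would handle this by first applying the decomposition of dual pairs over simple ideals (Lemma~5.3 in the excerpt) to reduce to the case where $\mf{g}$ is simple, and then treating the type-$A$ case separately. For $\mf{g} = \mf{sl}_{n+1}$, a semisimple dual pair $(\mf{a},\mf{b})$ arises from a tensor decomposition $\mathbb{C}^{n+1} \cong V_{\mf{a}} \otimes V_{\mf{b}}$, so $\mf{a} \oplus \mf{b}$ acts irreducibly on $\mathbb{C}^{n+1}$; on the other hand, every proper regular subalgebra of $\mf{sl}_{n+1}$ preserves a nontrivial direct-sum decomposition of $\mathbb{C}^{n+1}$. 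These conditions are incompatible, so the non-$S$-irreducibility hypothesis cannot be satisfied in type $A$, and the theorem holds vacuously there.
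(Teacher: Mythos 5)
Note first that the paper does not actually prove Theorem \ref{thm:non-S-irred} --- it is imported verbatim from Rubenthaler --- so your proposal has to stand on its own, and it has a genuine gap at its central step. The claimed enlargement lemma --- that any proper closed subset $\Phi_{\mf{r}}$ can be grown, ``by iterating the Borel--de Siebenthal construction,'' to a proper closed full-rank subsystem $\Phi'$ --- is false. Borel--de Siebenthal works top-down (deleting nodes of extended diagrams, as in Lemma \ref{lemma:elementary-ops}) and classifies the full-rank subsystems; it provides no containment-preserving route \emph{up} from an arbitrary $\Phi_{\mf{r}}$. A concrete non-type-$A$ counterexample: in $C_3$, take the Levi subsystem of type $A_2$, $\{\pm(\varepsilon_i - \varepsilon_j)\}$. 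Any closed symmetric subsystem containing it and having full rank must contain some $2\varepsilon_i$ or $\varepsilon_i+\varepsilon_j$, and then closure forces everything (e.g.\ $2\varepsilon_1 + (\varepsilon_2 - \varepsilon_1) = \varepsilon_1+\varepsilon_2$ and $(\varepsilon_1+\varepsilon_2)+(\varepsilon_1-\varepsilon_2)=2\varepsilon_1$), so $\Phi' = \Phi$. Even up to conjugacy this $\mf{sl}_3$ embeds in neither $C_1 \oplus C_2$ nor $3C_1$ (it has no nonzero map to $\mf{sl}_2$ and no self-dual $4$-dimensional representation), so here is a non-$S$ semisimple subalgebra of a non-type-$A$ simple algebra contained in \emph{no} proper maximal-rank regular semisimple subalgebra. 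Since, apart from your type-$A$ aside, your argument never uses that $(\mf{a},\mf{b})$ is a dual pair after the first sentence, it would prove the statement for every non-$S$ semisimple subalgebra --- and the $C_3$ example shows that statement is false. (A further omission in the same step: $\Phi_{\mf{r}}$ need not be symmetric --- think of $\mf{r}$ inside a parabolic, where the symmetric closure of $\Phi_{\mf{r}}$ is all of $\Phi$ --- so one must first conjugate $\mf{a}\oplus\mf{b}$ into the reductive part of its regular envelope via Levi--Malcev/Mostow before any root-system manipulation.)

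The indispensable ingredient you are missing is the centralizer identity for a semisimple dual pair: $\mf{z}_{\mf{g}}(\mf{a}\oplus\mf{b}) = \mf{z}_{\mf{g}}(\mf{a}) \cap \mf{z}_{\mf{g}}(\mf{b}) = \mf{b}\cap\mf{a} = 0$ (cf.\ Lemma \ref{lem:rub-levis}, since $\mf{a}\cap\mf{b} = Z_{\mf{a}}$ is trivial here). This makes enlargement unnecessary rather than merely possible: if $\mf{a}\oplus\mf{b}$ sat inside a regular semisimple $\mf{m}$ whose roots span a proper subspace of $\mf{h}^*$, then the nonzero torus $\{H\in\mf{h} : \alpha(H)=0 \text{ for all } \alpha\in\Phi_{\mf{m}}\}$ would centralize $\mf{a}\oplus\mf{b}$, a contradiction --- so \emph{every} regular semisimple subalgebra containing $\mf{a}\oplus\mf{b}$ (obtained by sliding the pair into the reductive part of its regular envelope and passing to the derived algebra, which contains $\mf{a}\oplus\mf{b} = [\mf{a}\oplus\mf{b},\mf{a}\oplus\mf{b}]$) automatically has maximal rank. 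With that in hand, your minimal-dimension descent for the $S$-property does go through (inside $\mf{u}$ one has $\mf{z}_{\mf{u}}(\mf{a}\oplus\mf{b}) = 0$ as well, and indeed $(\mf{a},\mf{b})$ remains a dual pair in $\mf{u}$ since $\mf{z}_{\mf{u}}(\mf{a}) = \mf{z}_{\mf{g}}(\mf{a})\cap\mf{u} = \mf{b}$), and type $A$ needs no separate treatment: since $A_n$ has no proper full-rank closed subsystems, the same centralizer argument shows directly that semisimple dual pairs there are $S$-irreducible. Two smaller slips in your type-$A$ paragraph: proper parabolic (regular) subalgebras of $\mf{sl}_{n+1}$ preserve an invariant subspace, not a direct-sum decomposition --- reducibility is what your argument actually needs --- and the assertion that every semisimple dual pair arises from a tensor decomposition itself requires proof (if $\C^{n+1}$ had two $\mf{a}$-isotypic components, then $\mf{z}_{\mf{g}}(\mf{a}) = \mf{b}$ would contain a nonzero central torus, contradicting semisimplicity).
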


Note that the subalgebra $\mf{u}$ in this theorem is maximal-rank and regular but need not be maximal regular. This theorem shows that to find all of the non-$S$-irreducible dual pairs in a simple Lie algebra $\mf{g}$, one can carry out the following process:
\begin{enumerate}
\item Write down all of the maximal-rank regular subalgebras of $\mf{g}$ (found in Table \ref{table:reg-max-rank}).
\item For each maximal-rank regular subalgebra $\mf{u}$ of $\mf{g}$, find all $S$-subalgebras $\mf{a} \oplus \mf{b}$ of $\mf{u}$.
\item For each pair $(\mf{a},\mf{b})$ from step 2, determine whether $(\mf{a},\mf{b})$ is a dual pair in $\mf{g}$.
\end{enumerate}

Using Table \ref{table:max-reg-dps}, step 1 is straightforward. For step 2, the $S$-subalgebras of $\mf{u}$ can be found in Figure \ref{fig:S-subalg-classical-1} or \ref{fig:S-subalg-classical-2} if $\mf{u}$ is a classical simple Lie algebra of rank at most 6 and can be found in Figure \ref{fig:S-subalgebras} if $\mf{u}$ is an exceptional Lie algebra. This information (together with the following lemma) can be used to find the $S$-subalgebras of the semisimple Lie algebras $\mf{u}$ we will consider.

\begin{lemma} \label{lem:S-subalgebras}
Let $\mf{u}_1$ and $\mf{u}_2$ be complex semisimple Lie algebras.
\begin{enumerate}[label=(\alph*)]
\item Let $s_{1}$ (resp.~$s_{2}$) be an $S$-subalgebra of $\mf{u}_1$ (resp.~$\mf{u}_2$). Then $s_{1} \oplus s_{2}$ is an $S$-subalgebra of $\mf{u}_1 \oplus \mf{u}_2$.
\item Let $s_{1} \oplus s_{2}$ be an $S$-subalgebra of $\mf{u}_1 \oplus \mf{u}_2$. Then $s_{1}$ is an $S$-subalgebra of $\mf{u}_1$ and $s_{2}$ is an $S$-subalgebra of $\mf{u}_2$. 
\end{enumerate}
\end{lemma}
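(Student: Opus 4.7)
The plan is to exploit two elementary facts about the Lie algebra $\mf{u} := \mf{u}_1 \oplus \mf{u}_2$: first, every Cartan subalgebra of $\mf{u}$ has the form $\mf{h}_1 \oplus \mf{h}_2$ with $\mf{h}_i$ a Cartan of $\mf{u}_i$; second, each root space of $(\mf{u}, \mf{h}_1 \oplus \mf{h}_2)$ lies entirely inside $\mf{u}_1$ or entirely inside $\mf{u}_2$ (and the $\mf{h}_i$-action on $\mf{u}_j$ is trivial for $i \neq j$). Together these imply that any $\text{ad}(\mf{h}_1 \oplus \mf{h}_2)$-stable subspace $\mf{r}$ of $\mf{u}$ satisfies $\mf{r} \cap \mf{u}_i$ is $\text{ad}(\mf{h}_i)$-stable in $\mf{u}_i$, i.e.\ regular in $\mf{u}_i$.

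For part (a), assume $s_1 \oplus s_2 \subseteq \mf{r}$ where $\mf{r}$ is a proper regular subalgebra of $\mf{u}$. Pick a Cartan $\mf{h}_1 \oplus \mf{h}_2$ under whose adjoint action $\mf{r}$ is stable, and set $\mf{r}_i := \mf{r} \cap \mf{u}_i$. By the observation above, $\mf{r}_i$ is regular in $\mf{u}_i$. Since $s_i$ embeds as $s_i \oplus 0$ (resp.\ $0 \oplus s_i$) into $\mf{u}$, and since this image lies in both $\mf{u}_i$ and $\mf{r}$, we get $s_i \subseteq \mf{r}_i$. The $S$-subalgebra hypothesis on $s_i$ then forces $\mf{r}_i = \mf{u}_i$ for both $i$, so $\mf{r} \supseteq \mf{u}_1 + \mf{u}_2 = \mf{u}$, contradicting properness.

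For part (b), we argue contrapositively. Suppose $s_1$ is not an $S$-subalgebra of $\mf{u}_1$. Since $s_1$ is a direct summand of the semisimple algebra $s_1 \oplus s_2$, it is itself semisimple, so the failure must be that $s_1 \subseteq \mf{r}_1 \subsetneq \mf{u}_1$ for some proper regular subalgebra $\mf{r}_1$ of $\mf{u}_1$ (stable under $\text{ad}(\mf{h}_1)$ for some Cartan $\mf{h}_1$). Choose any Cartan $\mf{h}_2$ of $\mf{u}_2$ and consider $\mf{r} := \mf{r}_1 \oplus \mf{u}_2 \subseteq \mf{u}$. This is a subalgebra, is stable under $\text{ad}(\mf{h}_1 \oplus \mf{h}_2)$ (hence regular), is proper in $\mf{u}$ because $\mf{r}_1 \subsetneq \mf{u}_1$, and contains $s_1 \oplus s_2$. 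This contradicts the $S$-subalgebra hypothesis on $s_1 \oplus s_2$. The symmetric argument handles $s_2$.

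I do not expect any substantial obstacle: both parts reduce to the elementary structural fact that regular subalgebras of a direct sum of semisimple algebras decompose compatibly with the factors. The only mild care needed is to remember that the definition of \emph{regular} used in the paper does not impose semisimplicity, so when cutting $\mf{r}$ by $\mf{u}_i$ in part (a) we must not claim semisimplicity of $\mf{r}_i$; fortunately the argument only needs regularity of $\mf{r}_i$, not semisimplicity, to invoke the $S$-subalgebra property of $s_i$.
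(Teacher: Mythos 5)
Your proof is correct, and for part (b) it coincides with the paper's own argument essentially verbatim: pass from a proper regular $\mf{r}_1 \subsetneq \mf{u}_1$ containing $s_1$ to the proper regular subalgebra $\mf{r}_1 \oplus \mf{u}_2 \supseteq s_1 \oplus s_2$. For part (a), however, you take a mildly different---and in fact more careful---route than the paper. The paper's proof of (a) begins by assuming that the proper regular subalgebra containing $s_1 \oplus s_2$ already has the split form $r_1 \oplus r_2$ with $r_i \subseteq \mf{u}_i$, which is not automatic: a regular subalgebra of $\mf{u}_1 \oplus \mf{u}_2$ need not decompose along the factors (e.g.\ the diagonal line $\{(h,h)\}$ inside a Cartan $\mf{h}_1 \oplus \mf{h}_2$ of $\mf{sl}_2 \oplus \mf{sl}_2$ is regular but is not of the form $r_1 \oplus r_2$). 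You instead take an arbitrary proper regular $\mf{r}$, intersect with each ideal, observe that $\mf{r}_i := \mf{r} \cap \mf{u}_i$ is a subalgebra of $\mf{u}_i$ stable under $\text{ad}(\mf{h}_i)$ (since $\mf{h}_2$ acts trivially on $\mf{u}_1$ and vice versa) and contains $s_i$, and then let the $S$-hypothesis force $\mf{r}_i = \mf{u}_i$ for both $i$, whence $\mf{r} \supseteq \mf{u}_1 + \mf{u}_2 = \mf{u}$, a contradiction that never requires $\mf{r}$ itself to split. Both arguments rest on the same structural facts (Cartan subalgebras of a direct sum split as $\mf{h}_1 \oplus \mf{h}_2$, and each factor is an ideal), but yours closes the implicit splitting assumption in the paper at essentially no extra cost; your closing caveat is also apt, since the paper's notion of regular subalgebra imposes no semisimplicity, so only regularity of $\mf{r}_i$ is needed to invoke the $S$-subalgebra property. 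Likewise, your observation in (b) that $s_1$ is automatically semisimple (as a direct summand of the semisimple $s_1 \oplus s_2$) correctly discharges the definitional requirement that an $S$-subalgebra be semisimple, a point the paper leaves implicit.
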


\begin{proof}[Proof of (a)]
Suppose, for the sake of contradiction, that $s_{1} \oplus s_{2}$ is contained in some proper regular subalgebra $r_{1} \oplus r_{2}$ of $\mf{u}_1 \oplus \mf{u}_2$. Then there exists a Cartan subalgebra $\mf{h}_{1} \oplus \mf{h}_{2}$ of $\mf{u}_1 \oplus \mf{u}_2$ such that 
$$[\mf{h}_{1} \oplus \mf{h}_{2}, r_{1} \oplus r_{2}] = [\mf{h}_{1}, r_{1}] \oplus [\mf{h}_{2}, r_{2}] \subset r_{1} \oplus r_{2}.$$ 
It is not hard to see that $\mf{h}_{1}$ and $\mf{h}_{2}$ are Cartan subalgebras of $\mf{u}_1$ and $\mf{u}_2$, respectively, and hence that $r_{1}$ and $r_{2}$ are regular subalgebras of $\mf{u}_1$ and $\mf{u}_2$ containing $s_{1}$ and $s_{2}$, respectively. Moreover, since $r_{1} \oplus r_{2}$ is proper in $\mf{u}_1 \oplus \mf{u}_2$, at least one of $r_{1}$ and $r_{2}$ has to be proper. This then gives a contradiction, proving that $s_{1} \oplus s_{2}$ is an $S$-subalgebra of $\mf{u}_1 \oplus \mf{u}_2$. 
\end{proof}

\begin{proof}[Proof of (b)]
Suppose, for the sake of contradiction, that $s_{1}$ is contained in a proper regular subalgebra $r_{1}$ of $\mf{u}_1$. Then $r_{1} \oplus \mf{u}_2$ is clearly a proper regular subalgebra of $\mf{u}_1 \oplus \mf{u}_2$ containing $s_{1} \oplus s_{2}$, a contradiction. Repeating this argument with the roles of $\mf{u}_1$ and $\mf{u}_2$ reversed, we again obtain a contradiction. 
\end{proof}

For step 3, we first need to view the $S$-subalgebras of $\mf{u}$ as subalgebras of $\mf{g}$. For a simple $S$-subalgebra $\mf{a}$ of a simple factor of $\mf{u}$, we can apply the formula $i_{\mf{a} \hookrightarrow \mf{g}} = i_{\mf{a}\hookrightarrow \mf{u}} \cdot i_{\mf{u} \hookrightarrow \mf{g}}$ to find the index of $\mf{a}$ in $\mf{g}$. Note that since the simple factors of $\mf{u}$ will almost always have index 1 in $\mf{g}$ (see Remark \ref{rmk:index-1}), we will usually have that $i_{\mf{a} \hookrightarrow \mf{u}} = i_{\mf{a} \hookrightarrow \mf{g}}$. While the value of $i_{\mf{a} \hookrightarrow \mf{g}}$ is easy to find, it can often be more challenging to figure out the conjugacy class of $\mf{a}$ in situations where the conjugacy class is not uniquely determined by the type and index of $\mf{a}$. This challenge will be addressed in several of the examples in the following section. Finally, the remainder of step 3 (i.e.~determining whether a candidate dual pair is actually a dual pair) is even more invovled. The next section is primarily dedicated to discussing different methods for confirming and eliminating these candidate dual pairs for the exceptional Lie algebras.

\section{Eliminating and confirming candidate dual pairs} \label{sec:elim-and-confirm}

\subsection{Straightforward eliminations}

Perhaps the most straightforward approach for ruling out a candidate dual pair $(\mf{a}, \mf{b})$ in $\mf{g}$ is to show that $\mf{z}_{\mf{g}} (\mf{a}) \supsetneq \mf{b}$ using information about subalgebras and dual pairs that have already been confirmed. For example, whenever $\mf{a} \oplus \mf{a}'$ is a subalgebra of $\mf{g}$ with $\mf{a}' \supsetneq \mf{b}$, this gives that 
$$\mf{z}_{\mf{g}}(\mf{a}) \supseteq \mf{a}' \supsetneq \mf{b},$$
so we can conclude that $(\mf{a}, \mf{b})$ is \textit{not} a dual pair in $\mf{g}$.

Similarly, suppose that $(\mf{a}, \mf{b})$ is a dual pair in $\mf{g}$ and that $\mf{b}_1 \oplus \mf{b}_2$ is a subalgebra of $\mf{b}$. This situation often leads to a candidate dual pair in $\mf{g}$ of the form $(\mf{a} \oplus \mf{b}_1, \mf{b}_2)$ (e.g.~when $\mf{a} \oplus \mf{b}$ is regular of maximal rank in $\mf{g}$ and $\mf{b}_1 \oplus \mf{b}_2$ is an $S$-subalgebra of $\mf{b}$). Since $(\mf{a}, \mf{b})$ is a dual pair in $\mf{g}$, we have that 
$$\mf{z}_{\mf{g}} (\mf{a} \oplus \mf{b}_1) = \mf{z}_{\mf{b}} (\mf{b}_1)$$
and that 
$$\mf{z}_{\mf{g}} (\mf{b}_2) \supseteq \mf{z}_{\mf{a} \oplus \mf{b}} (\mf{b}_2) = \mf{a} \oplus \mf{z}_{\mf{b}}(\mf{b}_2).$$
In this way, we see that for $(\mf{a} \oplus \mf{b}_1, \mf{b}_2)$ to be a dual pair in $\mf{g}$, it is necessary that $(\mf{b}_1, \mf{b}_2)$ is a dual pair in $\mf{b}$. (Note, however, that this condition is not enough to guarantee that $(\mf{a} \oplus \mf{b}_1, \mf{b}_2)$ is a dual pair in $\mf{g}$.) This idea is demonstrated in the following example:

\begin{example}[$(A_1^2 \oplus 3A_1, B_2^1)$ is \textit{not} a dual pair in $E_7$]
Since $(B_2^{1''}, B_3^1)$ is an $S$-irreducible dual pair in $D_6$, we have that $B_2^1 \oplus B_3^1 \oplus A_1$ is an $S$-subalgebra of $E_7$, and hence that 
$$\mf{z}_{E_7} (B_2^1) \supseteq B_3^1 \oplus A_1 \supsetneq A_1^2 \oplus 3A_1.$$
Therefore, $(A_1^2 \oplus 3A_1, B_2^1)$ is \textit{not} a dual pair in $E_7$. 
\end{example}

\subsection{Eliminating and confirming using Dynkin's and Carter's tables}

To augment our strategy of using information about known dual pairs to make straightforward eliminations of candidate dual pairs, we can reference \cite[Table 25]{dynkin} and the tables in \cite[pp.~401--405]{Carter}. The tables in \cite[pp.~401--405]{Carter} indicate the type of the centralizer of each 3-dimensional subalgebra of the exceptional Lie algebras, and \cite[Table 25]{dynkin} lists all of the conjugacy classes of subalgebras of the exceptional Lie algebras.

\begin{example}[$(A_3, \tilde{A_1})$ is a dual pair in $F_4$] \label{ex:A3-A1tilde-dp-F4}
Recall that $A_3 \oplus \tilde{A_1}$ is a maximal-rank regular subalgebra in $F_4$, but \textit{not} a maximal regular subalgebra. Therefore, $(A_3, \tilde{A_1})$ is a candidate dual pair in $F_4$, but Proposition \ref{prop:maximal-reg-is-dp} does not apply. Fortunately, by \cite[p.~401]{Carter}, we have that the centralizer of $\tilde{A_1}$ in $F_4$ has type $A_3$. Moreover, by \cite[Table 25]{dynkin}, there is a unique (up to conjugation) subalgebra of type $A_3$ in $F_4$, and it has index 1. On the other hand, the centralizer of $A_3$ in $F_4$ clearly contains $\tilde{A_1}$ and has rank at most 1, so we can conclude that $\mf{z}_{F_4}(A_3) = \tilde{A_1}$. It follows that $(A_3, \tilde{A_1})$ is a dual pair in $F_4$.
\end{example}

\begin{example}[$(\tilde{A_1}, B_2^1)$ and $(\tilde{A_1}, 2\tilde{A_1})$ are \textit{not} dual pairs in $F_4$]
Recall that $(A_1^{2''}, B_2^{1''})$ is an $S$-irreducible dual pair in $D_4$ (see Table \ref{table:dps-in-Dn}). Since $D_4$ is a maximal-rank regular subalgebra of index 1 in $F_4$, $(A_1^2, B_2^1)$ is a candidate dual pair in $F_4$. However, by \cite[Table 20]{dynkin}, there is a unique (up to conjugation) subalgebra of type $A_1$ and index 2 in $F_4$ (namely, $\tilde{A_1}$). Since we already know that the centralizer of $\tilde{A_1}$ in $F_4$ is $A_3$ (by \cite[p.~401]{Carter} or Example \ref{ex:A3-A1tilde-dp-F4}), it follows that $(A_1^2, B_2^1) = (\tilde{A_1}, B_2^1)$ is \textit{not} a dual pair in $F_4$. Similarly, $(A_1^2, 2 A_1^2) = (\tilde{A_1}, 2 \tilde{A_1})$ is \textit{not} a dual pair in $F_4$. 
\end{example}

\subsection{Eliminating and confirming using dimension of the centralizer}

Let $\mf{g}$ be a complex simple Lie algebra, and let $\tilde{\mf{g}}$ be a semisimple subalgebra of $\mf{g}$. The representation which is induced on $\tilde{\mf{g}}$ by the adjoint representation of $\mf{g}$ splits into two components: one component acts on $\tilde{\mf{g}}$ and can be thought of as the adjoint representation of $\tilde{\mf{g}}$; the complementary component, called the \textbf{characteristic representation} of $\tilde{\mf{g}}$, is denoted by $\chi_{\tilde{\mf{g}}}$ \cite[No.~6]{dynkin}. 

\begin{lemma} \label{lem:dimension-of-cent}
Let $M$ denote the number of copies of the trivial representation in $\chi_{\tilde{\mf{g}}}$. Then 
$$M = \dim \mf{z}_{\mf{g}}(\tilde{\mf{g}}).$$
\end{lemma}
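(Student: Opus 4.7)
The plan is to identify $\mf{z}_{\mf{g}}(\tilde{\mf{g}})$ as the space of $\tilde{\mf{g}}$-invariants in $\mf{g}$ (under the adjoint action) and then use the decomposition $\mf{g}|_{\tilde{\mf{g}}} \simeq \tilde{\mf{g}} \oplus \chi_{\tilde{\mf{g}}}$ to count these invariants.

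First I would unravel the definition: for $X \in \mf{g}$, we have $X \in \mf{z}_{\mf{g}}(\tilde{\mf{g}})$ exactly when $[Y, X] = 0$ for every $Y \in \tilde{\mf{g}}$, which says precisely that $X$ is fixed by the adjoint action of $\tilde{\mf{g}}$ on $\mf{g}$. Hence
\[
\mf{z}_{\mf{g}}(\tilde{\mf{g}}) = \mf{g}^{\tilde{\mf{g}}},
\]
and its dimension equals the multiplicity of the trivial representation in the $\tilde{\mf{g}}$-module $\mf{g}$.

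Next I would invoke the decomposition described just before the lemma: restricting the adjoint representation of $\mf{g}$ to $\tilde{\mf{g}}$ yields a splitting into the adjoint representation of $\tilde{\mf{g}}$ on itself and the characteristic representation $\chi_{\tilde{\mf{g}}}$ on a complementary submodule. Consequently the multiplicity of the trivial representation in $\mf{g}|_{\tilde{\mf{g}}}$ is the sum of the trivial multiplicities in $\tilde{\mf{g}}$ (as an adjoint module over itself) and in $\chi_{\tilde{\mf{g}}}$.

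The key observation is that the adjoint representation of the semisimple Lie algebra $\tilde{\mf{g}}$ on itself contains no copy of the trivial representation: a trivial summand would be a nonzero element commuting with all of $\tilde{\mf{g}}$, i.e.\ a nonzero element of the center of $\tilde{\mf{g}}$, contradicting semisimplicity. Therefore the trivial multiplicity in $\mf{g}|_{\tilde{\mf{g}}}$ equals the trivial multiplicity in $\chi_{\tilde{\mf{g}}}$, which is $M$. Combining with the first step gives $\dim \mf{z}_{\mf{g}}(\tilde{\mf{g}}) = M$, as desired. I expect no real obstacle here; the only point that requires a moment's care is the vanishing of $\tilde{\mf{g}}$-invariants in the adjoint module $\tilde{\mf{g}}$, which follows immediately from semisimplicity.
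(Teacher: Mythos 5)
Your argument is correct, and it takes a more elementary route than the paper's. Both proofs rest on the same two inputs --- the module decomposition $\mf{g}\vert_{\tilde{\mf{g}}} \simeq \tilde{\mf{g}} \oplus \chi_{\tilde{\mf{g}}}$ and the fact that a semisimple algebra has no invariants in its own adjoint module (trivial center) --- but the paper executes them at the group level: it passes to the simply connected groups $G$ and $\tilde{G}$, writes $\Res^{G}_{\tilde{G}}(\mathrm{Ad}_G) = \mathrm{Ad}_{\tilde{G}} \oplus \chi_{\tilde{G}}$, identifies trivial summands of $\chi_{\tilde{G}}$ with vectors in $\mf{g}/\tilde{\mf{g}}$ fixed by every $e^X$, translates fixedness into $[X,Y]=0$ via the exponential, and finally uses $\tilde{\mf{g}} \cap \mf{z}_{\mf{g}}(\tilde{\mf{g}}) = \{0\}$ to see that these vectors span the centralizer. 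You stay entirely infinitesimal: the centralizer is the invariant subspace $\mf{g}^{\tilde{\mf{g}}}$, trivial multiplicity is additive over the two summands, and the adjoint summand contributes nothing. This is cleaner --- in particular it sidesteps the paper's mildly delicate and unaddressed point that the simply connected group of $\tilde{\mf{g}}$ need not literally embed in $G$ --- at the small cost of invoking Weyl's complete reducibility theorem, which is what licenses your identification of $\dim \mf{g}^{\tilde{\mf{g}}}$ with the number of trivial summands; that is the one implicit ingredient you should cite explicitly. What the group formulation buys the paper is reusability: the same fixed-vector language is deployed later in the text (e.g.~in the $E_7$ verification that $(A_2^{2'}, A_1^{3''}\oplus A_2)$ is a dual pair, where fixed vectors of $\chi_{S}$ for $S = SU(3,\C)\times SU(2,\C)\times SU(3,\C)$ are computed directly), so phrasing the lemma's proof in those terms sets up that computation, whereas your version would require a one-line translation back to groups at that point.
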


\begin{proof}
Let $G$ be the simply connected Lie group associated to $\mf{g}$, and let $\tilde{G} \subset G$ be the simply connected Lie group associated to $\tilde{\mf{g}}$. We have that
$$\Res^{G}_{\tilde{G}} (\text{Ad}_{G}) = \text{Ad}_{\tilde{G}} \oplus \chi_{\tilde{G}},$$
where $\text{Ad}_G : G \rightarrow \text{Aut} (\mf{g})$ is the adjoint representation of $G$, where $\text{Ad}_{\tilde{G}} : \tilde{G} \rightarrow \text{Aut}(\tilde{\mf{g}})$ is the adjoint representation of $\tilde{G}$, and where $\chi_{\tilde{G}} : \tilde{G} \rightarrow \text{Aut} (\mf{g}/\tilde{\mf{g}})$ is the resulting complementary component. Noting that $\chi_{\tilde{\mf{g}}} = d \chi_{\tilde{G}}$, we see that the number of copies of the trivial representation in $\chi_{\tilde{\mf{g}}}$ equals the number of copies of the trivial representation in $\chi_{\tilde{G}}$. 

The set of copies of the trivial representation in $\chi_{\tilde{G}}$ correspond to a set of linearly independent vectors, each of which is fixed by all of $\chi_{\tilde{G}}(\tilde{G})$. Indeed, any vector that is fixed by all of $\chi_{\tilde{G}}(\tilde{G})$ spans a one-dimensional subrepresentation, which is necessarily the trivial representation. Moreover, we note that each such fixed vector corresponds to an element of $\mf{z}_{\mf{g}}(\tilde{\mf{g}})$. Indeed, for $Y \in \mf{g}/\tilde{\mf{g}}$, we have that
$$e^X Y e^{-X} = Y \; \text{ for all } X \in \tilde{\mf{g}} \hspace{.5cm} \iff \hspace{.5cm} [X,Y] = 0 \; \text{ for all } X \in \tilde{\mf{g}}.$$
Finally, since $\tilde{\mf{g}}$ is semisimple, we have that $\tilde{\mf{g}} \cap \mf{z}_{\mf{g}} (\tilde{\mf{g}}) = \varnothing$, so these elements of $\mf{g}/\tilde{\mf{g}}$ span $\mf{z}_{\mf{g}}(\tilde{\mf{g}})$. It follows that $M = \dim \mf{z}_{\mf{g}}(\tilde{\mf{g}})$, as desired.
\end{proof}

The decomposition of $\chi_{\tilde{\mf{g}}}$ into irreducibles is included in \cite[Table 25]{dynkin} for subalgebras of the exceptional Lie algebras. The number $M$ in the previous lemma can be read off of this table as the coefficient of the trivial representation, which Dynkin denotes by $N$. This gives us an easy way to look up the dimension of $\mf{z}_{\mf{g}}(\tilde{\mf{g}})$ for such subalgebras, which in some cases allows us to confirm or eliminate candidate dual pairs.

\begin{example}[$(A_2^3, A_2^3)$ is \textit{not} a dual pair in $E_8$]
By Subsection \ref{subsec:A_n}, there is an $S$-irreducible dual pair in $A_8$ where both factors have type $A_2$ and index 3. By \cite[Table 25]{dynkin}, $E_8$ has two non-conjugate subalgebras of type $A_2$ and index 3 (i.e.~$A_2^{3'}$ and $A_2^{3''}$). However, by the same table, $\mf{z}_{E_8}(A_2^{3'})$ has dimension 28 and $\mf{z}_{E_8}(A_2^{3''})$ has dimension 14. Since $\dim A_2 = 8$, we therefore see that neither of these possibilities leads to a dual pair in $E_8$.
\end{example}

\begin{example}[$(B_3^1, B_4^1)$ is a dual pair in $E_8$] \label{ex:E8-B3-B4}
By Subsection \ref{subsec:D_n}, there is an $S$-irreducible dual pair in $D_8$ where one factor has type $B_3$ and index 1 and the other factor has type $B_4$ and index 1. By \cite[Table 25]{dynkin}, these types and index values uniquely specify subalgebras of $E_8$ (up to conjugation). By the same table, $\dim \mf{z}_{E_8}(B_3^1) = 36 = \dim B_4$ and $\dim \mf{z}_{E_8}(B_4^1) = 21 = \dim B_3$. From this, it's clear that $(B_3^1, B_4^1)$ is a dual pair in $E_8$.
\end{example}

\begin{example}[$(A_1^3, A_2^{2''} \oplus A_1)$ is a dual pair in $E_6$] \label{ex:E6A1^3-dp}
By Table \ref{table:dps-in-An}, $(A_1^3, A_2^2)$ is an $S$-irreducible dual pair in $A_5$. Since $A_5 \oplus A_1$ is a maximal regular subalgebra of maximal rank in $E_6$, $(A_5, A_1)$ is a dual pair in $E_6$ and there is a candidate dual pair in $E_6$ with the following types and index values: $(A_1^3, A_2^{2} \oplus A_1)$. By \cite[Table 25]{dynkin}, there is a unique (up to conjugation) subalgebra of type $A_1$ and index 3 in $E_6$, as well as a unique (up to conjugation) subalgebra of type $A_1$ and index 1 in $E_6$; however, there are two conjugacy classes of subalgebras of type $A_2$ and index 2. Fortunately, with Lemma \ref{lem:dimension-of-cent}, \cite[Table 25]{dynkin}, and some further investigation, we can determine which conjugacy class we are dealing with here.  

To this end, consider the extended Dynkin diagram for $E_6$:

\setlength{\unitlength}{2cm}
\begin{picture}(5,2.0)(-5.2,-0.5)
  \multiput(-1.59,-.05)(.5,0){1}{\large $\triangledown$}
  \multiput(-1.5,.5)(.5,0){1}{\circle*{.10}}
  \multiput(-2.5,1.0)(.5,0){2}{\circle*{.10}}
  \multiput(-1.0,1.0)(.5,0){2}{\circle*{.10}}
  \multiput(-1.5,1.0)(.5,0){1}{\circle*{.10}}
  \multiput(-2.5, 1.0)(0,.5){1}{\line(1,0){1}}
  \multiput(-1.5,1.0)(0,.5){1}{\line(1,0){1}}
  \multiput(-1.5,0.1)(.5,0){1}{\line(0,1){.9}}
  \put(-1.4,.47){\tiny $\alpha_2$}
  \put(-1.55,1.15){\tiny $\alpha_4$}
  \put(-2.55,1.15){\tiny $\alpha_1$}
  \put(-2.07,1.15){\tiny $\alpha_3$}
  \put(-1.07,1.15){\tiny $\alpha_5$}
  \put(-0.57,1.15){\tiny $\alpha_6$}
\end{picture}

Crossing out the node corresponding to $\alpha_2$, we obtain the maximal regular subalgebra of type $A_5 \oplus A_1$ in $E_6$. To understand $A_1^3$ and this $A_2^2$ as subalgebras of $E_6$, we can consider the following diagram (where the admissible diagram corresponding to $A_1^3$ in $A_5$ is indicated in blue):

\setlength{\unitlength}{2cm}
\begin{picture}(5,2.0)(-5.2,-0.5)
  \multiput(-1.59,-.05)(.5,0){1}{\large $\triangledown$}
  \multiput(-1.58,.5)(.5,0){1}{\textcolor{red}{$\times$}}
  \multiput(-2.5,1.0)(.5,0){2}{\color{blue}\circle*{.10}}
  \multiput(-1.0,1.0)(.5,0){2}{\color{blue}\circle*{.10}}
  \multiput(-1.5,1.0)(.5,0){1}{\color{blue}\circle{.15}}
  \multiput(-1.5,1.0)(.5,0){1}{\color{blue}\circle*{.04}}
  \multiput(-2.5, 1.0)(0,.5){1}{\color{blue}\line(1,0){.925}}
  \multiput(-1.425,1.0)(0,.5){1}{\color{blue}\line(1,0){.925}}
  \multiput(-1.5,0.1)(.5,0){1}{\line(0,1){.825}}
\end{picture}

Realizing the $A_5$ root system as in \cite[Plate I]{bourbaki}, it is not hard to show that the defining vector for $A_1^3$ is $(1,1,1,-1,-1,-1)$. With this, we see that the centralizer of $A_1^3$ in $A_5$ has type $A_2$ and is diagonally embedded in the subalgebra of type $A_2 \oplus A_2$ coming from $\alpha_1$, $\alpha_2$, $\alpha_5$, and $\alpha_6$ in $E_6$. In this way, we see that the centralizer of this $A_2^2$ in $E_6$ should contain $A_1^3$ in addition to a subalgebra of type $A_2$ coming from $\alpha_2$ and $\tilde{\alpha}$ (since the nodes corresponding to $\alpha_2$ and $\tilde{\alpha}$ are not adjacent to $\alpha_1$, $\alpha_2$, $\alpha_5$, or $\alpha_6$). Therefore, the dimension of the centralizer of this $A_2^2$ is at least 11. Consulting \cite[Table 25]{dynkin}, we see that this subalgebra must therefore be $A_2^{2''}$.

Finally, \cite[p.~402]{Carter} gives that $\mf{z}_{E_6}(A_1^3)$ has type $A_2 \oplus A_1$, and 
$$\mf{z}_{E_6} (A_2^{2''} \oplus A_1) = \mf{z}_{A_5} (A_2^2) = A_1^3,$$
so we can conclude that $(A_1^3, A_2^{2''} \oplus A_1)$ is a dual pair in $E_6$.
\end{example}

\subsection{Eliminating and confirming based on fixed vectors \texorpdfstring{of $\chi_{\tilde{\mf{g}}}$}{}}

While looking up the dimension of centralizers in \cite[Table 25]{dynkin} (using Lemma \ref{lem:dimension-of-cent}) helps us confirm and eliminate many candidate dual pairs, there are a few cases in which we need to compute $\chi_{\tilde{\mf{g}}}$ more explicitly. For example, consider the candidate dual pair in $E_7$ with types and index values $(A_2^2, A_1^3 \oplus A_2)$ (to be explored in more detail in Example \ref{ex:E7-A2^2-A1^3-A2}). By \cite[Table 25]{dynkin}, there are two conjugacy classes of subalgebras of type $A_2$ and index 2 in $E_7$. Moreover, while we know that the centralizer of the $A_2^2$ in question contains $A_1^3 \oplus A_2$, Lemma \ref{lem:dimension-of-cent} and \cite[Table 25]{dynkin} show that the centralizers of both $A_2^{2'}$ and $A_2^{2''}$ have dimension at least 11 (so we cannot conclude whether we're working with $A_2^{2'}$ or $A_2^{2''}$ based on this information).

\begin{remark} \label{rmk:rep-theory-elims}
In situations like the one just described, we can calculate $\chi_{\tilde{\mf{g}}}$ by hand to check for copies of the trivial representation in $\chi_{\tilde{\mf{g}}}$ and to determine the conjugacy class of $\tilde{\mf{g}}$.
\end{remark}

\begin{example}[$(A_2^{2'}, A_1^{3''} \oplus A_2)$ is a dual pair in $E_7$] \label{ex:E7-A2^2-A1^3-A2}
Consider once again the candidate dual pair in $E_7$ with types and index values $(A_2^2, A_1^3 \oplus A_2)$. To determine the conjugacy class of $A_1^3$, we can look at the embeddings $A_1^3 \subset A_5^{''} \subset E_7$. To this end, consider the extended Dynkin diagram for $E_7$:

\setlength{\unitlength}{2cm}
\begin{picture}(5,1.5)(-5.2,0)
  \multiput(-1.5,.5)(.5,0){1}{\circle*{.10}}
  \put(-3.09,.92){\large $\triangledown$}
  \multiput(-2.5,1.0)(.5,0){2}{\circle*{.10}}
  \multiput(-1.0,1.0)(.5,0){3}{\circle*{.10}}
  \multiput(-1.5,1.0)(.5,0){1}{\circle*{.10}}
  \multiput(-2.5, 1.0)(0,.5){1}{\line(1,0){1}}
  \multiput(-1.5,1.0)(0,.5){1}{\line(1,0){1.5}}
  \multiput(-1.5,0.5)(.5,0){1}{\line(0,1){.5}}
  \put(-2.97,1){\line(1,0){.5}}
  \put(-1.4,.47){\tiny $\alpha_2$}
  \put(-1.55,1.15){\tiny $\alpha_4$}
  \put(-2.55,1.15){\tiny $\alpha_1$}
  \put(-2.07,1.15){\tiny $\alpha_3$}
  \put(-1.07,1.15){\tiny $\alpha_5$}
  \put(-0.57,1.15){\tiny $\alpha_6$}
  \put(-.08,1.15){\tiny $\alpha_7$}
\end{picture}

Crossing out the node corresponding to $\alpha_3$, we obtain the maximal regular subalgebra of type $A_2 \oplus A_5^{''}$ in $E_7$. To understand this index-3 subalgebra of type $A_1$, we can consider the following diagram (where the admissible diagram corresponding to $A_1^3$ in $A_5^{''}$ is indicated in blue):

\setlength{\unitlength}{2cm}
\begin{picture}(5,1.5)(-5.2,0)
  \multiput(-1.5,.5)(.5,0){1}{\color{blue}\circle*{.10}}
  \put(-3.09,.92){\large $\triangledown$}
  \multiput(-2.5,1.0)(.5,0){1}{\circle*{.10}}
  \put(-2.1,.95){\textcolor{red}{$\times$}}
  \put(-1.0,1.0){\color{blue}\circle{.15}}
  \put(-1.0,1.0){\color{blue}\circle*{.04}}
  \multiput(-0.5,1.0)(.5,0){2}{\color{blue}\circle*{.10}}
  \multiput(-1.5,1.0)(.5,0){1}{\color{blue}\circle*{.10}}
  \multiput(-2.5, 1.0)(0,.5){1}{\line(1,0){.95}}
  \put(-1.5,1.0){\color{blue}\line(1,0){.43}}
  \put(-.93,1.0){\color{blue}\line(1,0){.93}}
  \multiput(-1.5,0.5)(.5,0){1}{\color{blue}\line(0,1){.5}}
  \put(-2.97,1){\line(1,0){.5}}
\end{picture}

We can realize the $A_5$ root system as in \cite[Plate I]{bourbaki}. With this, it is not hard to show that this $A_1^3$ has defining vector $(1,1,1,-1,-1,-1)$. Putting this vector in dominant form, we see that this subalgebra has weighted diagram

\begin{center}
\begin{tikzcd}[row sep = small, column sep = small]
0 \arrow[dash]{r} & 0 \arrow[dash]{r} & 0 \arrow[dash]{r} \arrow[dash]{d} & 0 \arrow[dash]{r} & 0 \arrow[dash]{r} & 2 \\
 & & 0
\end{tikzcd}
\end{center}
in $E_7$, meaning the subalgebra in question is $A_1^{3''}$ \cite[Table 19]{dynkin}.

For $A_2^2$, it is a bit more difficult to figure out which conjugacy class we're working with. However, the information about how this $A_2^2$ sits inside of $E_7$ allows us to determine the number of copies of the trivial representation in $\chi_{A_2^2}$ without knowing beforehand which conjugacy class this $A_2^2$ belongs to. To this end, let $\mf{s} = A_2^2 \oplus A_1^{3''} \oplus A_2$, and let 
$$S = SU(3,\mathbb{C}) \times SU(2,\mathbb{C}) \times SU(3,\mathbb{C}).$$
Note that by the same reasoning as in Lemma \ref{lem:dimension-of-cent}, $\Res^{\mf{s}}_{A_2^2} ( \chi_{\mf{s}})$ has no copies of the trivial representation if and only if $\Res^{S}_{SU(3,\mathbb{C})}(\chi_{S})$ has no fixed vectors if and only if $\mf{z}_{E_7}(A_2^2) = A_1^{3''} \oplus A_2$. We have that $\chi_{S}$ acts on the space $E
_7/(A_2^2 \oplus A_1^{3''} \oplus A_2)$, and that
$$E_7/ (A_2^2 \oplus A_1^{3''} \oplus A_2) \simeq [E_7/(A_5^{''} \oplus A_2)] \oplus [A_5^{''}/(A_2^2 \oplus A_1^{3})].$$
Now, since $(A_2^2, A_1^3)$ is a dual pair in $A_5$, $\Res^S_{SU(3,\mathbb{C})}(\chi_{S})$ will not have any fixed vectors coming from $A_5^{''}/(A_2^2 \oplus A_1^3)$. The $E_7/(A_5^{''} \oplus A_2)$-factor of $\chi_{S}$ decomposes in terms of fundamental representations of $SU(3,\mathbb{C})$ and $SU(2,\mathbb{C})$ as follows:
$$[\wedge^2 (\mathbb{C}^3 \otimes \mathbb{C}^2) \otimes \mathbb{C}^3] \oplus [ ( \wedge^2 (\mathbb{C}^3 \otimes \mathbb{C}^2) )^* \otimes (\mathbb{C}^3)^* ] .$$
Letting $\{ (1,0,0), (0,1,0), (0,0,1) \}$ and $\{ (1,0), (0,1) \}$ be the standard weight bases for $\mathbb{C}^3$ and $\mathbb{C}^2$, respectively, we get the following weight basis for $\mathbb{C}^3 \otimes \mathbb{C}^2$:
\begin{align*}
\{ & (1,0,0,1,0), \, (0,1,0,1,0), \, (0,0,1,1,0), \\
 & (1,0,0,0,1), \, (0,1,0,0,1), \, (0,0,1,0,1) \}.
\end{align*}
A weight basis for $\wedge^2 (\mathbb{C}^3 \otimes \mathbb{C}^2)$ is then obtained by taking sums of any two distinct elements from the $\mathbb{C}^3 \otimes \mathbb{C}^2$ weight basis. We get $(2,0,0,1,1)$ and $(1,1,0,2,0)$ as highest weights, which have weight spaces of dimensions 6 and 9, respectively (by the Weyl dimension formula). In particular, when decomposing the $E_7/(A_5^{''} \oplus A_2)$-factor of $\chi_S$ into irreducibles, we do not get any copies of the trivial representation of the $SU(3,\mathbb{C})$ corresponding to $A_2^2$. It follows that $\mf{z}_{E_7}(A_2^2) = A_1^{3''} \oplus A_2$. Finally, by \cite[Table 25]{dynkin} and Lemma \ref{lem:dimension-of-cent}, we see that this index-2 subalgebra of type $A_2$ is $A_2^{2'}$.
\end{example}

\section{All of the semisimple dual pairs in the exceptional Lie algebras} \label{sec:lists}

\subsection{\texorpdfstring{$S$}{S}-irreducible dual pairs in the exceptional Lie algebras}

As described in Section \ref{sec:S-irred}, one can generate a complete list of the $S$-irreducible dual pairs in an exceptional Lie algebra $\mf{g}$ by considering all admissible diagrams associated with an appropriate $(\Psi, \theta)$, computing the subalgebra $\tilde{\mf{g}_{\theta}}$ corresponding to $(\Psi, \theta)$, computing $\mf{z}_{\mf{g}}(\tilde{\mf{g}_{\theta}})$ and checking whether $\tilde{\mf{g}_{\theta}} \oplus \mf{z}_{\mf{g}} ( \tilde{\mf{g}_{\theta}} )$ appears in Figure \ref{fig:S-subalgebras} as an $S$-subalgebra of $\mf{g}$. Rubenthaler carries out this process in \cite[Sections 6.8--6.12]{rubenthaler}, and the results are summarized in Table \ref{table:S-irreducibles}.

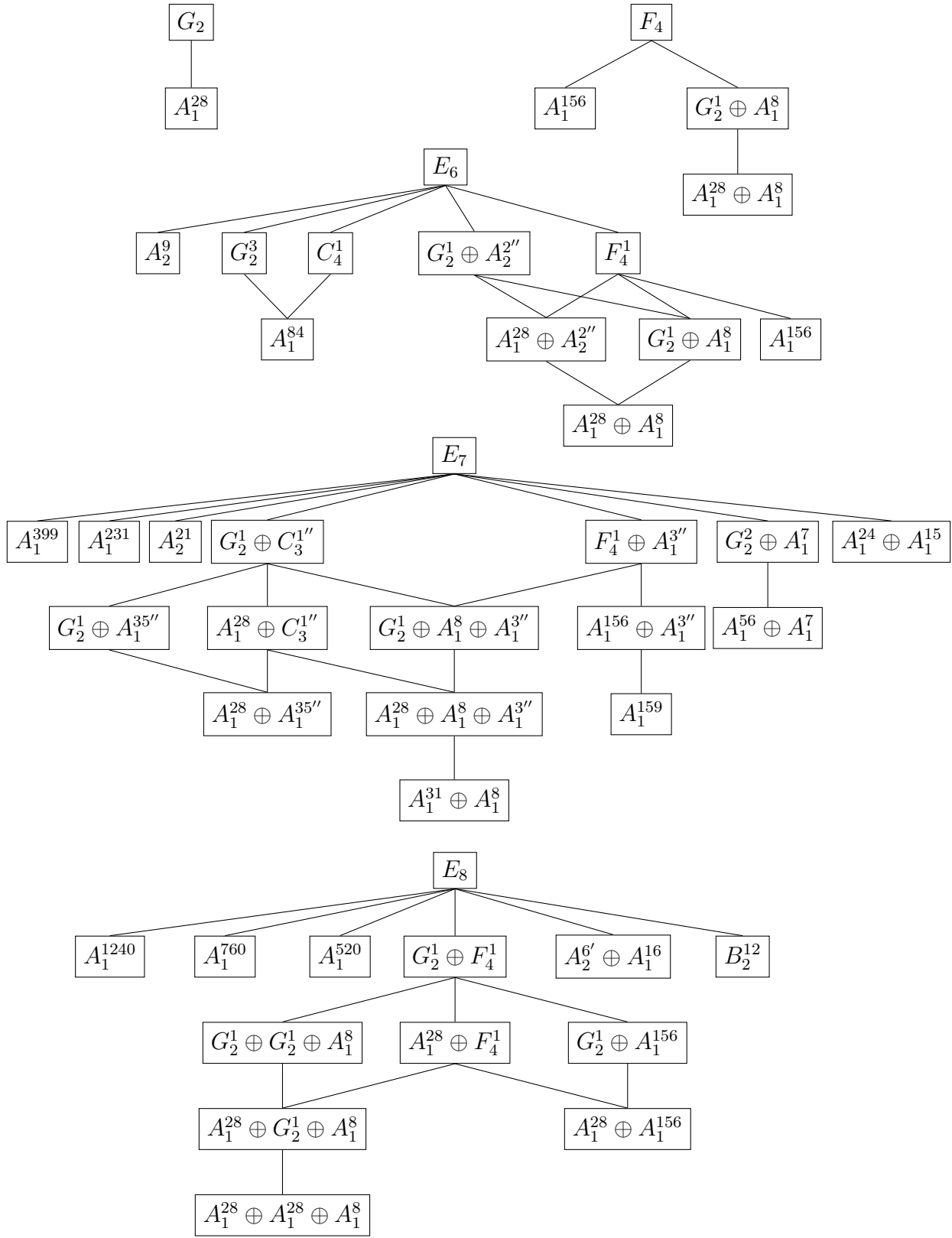
\begin{figure} 
\begin{subfigure}{\textwidth}
        \centering\begin{tikzpicture}
\begin{scope}[xshift=-1cm]
    \node (box1) at (0,0) [draw, rectangle] {$G_2$};
    
    \node (box2) at (0,-1.5) [draw, rectangle] {$A_1^{28}$};
    
    \draw (box1.south) -- (box2.north);
\end{scope}
\begin{scope}[xshift=7cm]
    \node (top) at (0,0) [draw, rectangle] {$F_4$};
    
    \node (left) at (-1.5,-1.5) [draw, rectangle] {$A_1^{156}$};
    
    \node (right) at (1.5,-1.5) [draw, rectangle] {$G_2^1 \oplus A_1^8$};
    
    \node (belowRight) at (1.5,-3) [draw, rectangle] {$A_1^{28} \oplus A_1^8$};
    
    \draw (top.south) -- (left.north);
    \draw (top.south) -- (right.north);
    
    \draw (right.south) -- (belowRight.north);
\end{scope}
\end{tikzpicture}
\end{subfigure}

\vspace{-1.2cm}

\begin{subfigure}{\textwidth}
   \centering \begin{tikzpicture}
    \node (top) at (0,0) [draw, rectangle] {$E_6$};
    
    \node (box1) at (-5,-1.5) [draw, rectangle] {$A_2^9$};
    
    \node (box2) at (-3.5,-1.5) [draw, rectangle] {$G_2^3$};

    \node (box2-1) at (-2.75,-3) [draw, rectangle] {$A_1^{84}$};
    \draw (box2.south) -- (box2-1.north);
    
    \node (box3) at (-2,-1.5) [draw, rectangle] {$C_4^1$};
    \draw (box3.south) -- (box2-1.north);
    
    \node (box4) at (.5,-1.5) [draw, rectangle] {$G_2^1 \oplus A_2^{2''}$};
    
    \node (box5) at (3,-1.5) [draw, rectangle] {$F_4^1$};

    \node (box4-1) at (1.75,-3) [draw, rectangle] {$A_1^{28} \oplus A_2^{2''}$};
    \draw (box4.south) -- (box4-1.north);
    \draw (box5.south) -- (box4-1.north);

    \node (box5-2) at (4.25,-3) [draw, rectangle] {$G_2^{1} \oplus A_1^{8}$};
    \draw (box5.south) -- (box5-2.north);
    \draw (box4.south) -- (box5-2.north);

    \node (box4-1-1) at (3,-4.5) [draw, rectangle] {$A_1^{28} \oplus A_1^{8}$};
    \draw (box4-1.south) -- (box4-1-1.north);
    \draw (box5-2.south) -- (box4-1-1.north);

    \node (box5-3) at (6,-3) [draw, rectangle] {$A_1^{156}$};
    \draw (box5.south) -- (box5-3.north);
    
    \draw (top.south) -- (box1.north);
    \draw (top.south) -- (box2.north);
    \draw (top.south) -- (box3.north);
    \draw (top.south) -- (box4.north);
    \draw (top.south) -- (box5.north);
\end{tikzpicture}
\end{subfigure}

\vspace{-.2cm}

\begin{subfigure}{\textwidth}
   \centering \begin{tikzpicture}
    \node (top) at (0,0) [draw, rectangle] {$E_7$};
    
    \node (box1) at (-7.25,-1.5) [draw, rectangle] {$A_1^{399}$};
    
    \node (box2) at (-6,-1.5) [draw, rectangle] {$A_1^{231}$};

    \node (box3) at (-4.85,-1.5) [draw, rectangle] {$A_2^{21}$};

    \node (box4) at (-3.25,-1.5) [draw, rectangle] {$G_2^1 \oplus C_3^{1''}$};

    \node (box5) at (3.25,-1.5) [draw, rectangle] {$F_4^1 \oplus A_1^{3''}$};

    \node (box6) at (5.45,-1.5) [draw, rectangle] {$G_2^2 \oplus A_1^{7}$};

    \node (box7) at (7.6,-1.5) [draw, rectangle] {$A_1^{24} \oplus A_1^{15}$};

    \node (box4-2) at (-3.25,-3) [draw, rectangle] {$A_1^{28} \oplus C_3^{1''}$};
    \node (box4-2-1) at (-3.25,-4.5) [draw, rectangle] {$A_1^{28} \oplus A_1^{35''}$};

    \node (box4-3) at (0,-3) [draw, rectangle] {$G_2^{1} \oplus A_1^8 \oplus A_1^{3''}$};
    \node (box4-3-1) at (0,-4.5) [draw, rectangle] {$A_1^{28} \oplus A_1^8 \oplus A_1^{3''}$};
    \node (box4-3-1-1) at (0,-6) [draw, rectangle] {$A_1^{31} \oplus A_1^8$};

    \node (box5-1) at (3.25,-3) [draw, rectangle] {$A_1^{156} \oplus A_1^{3''}$};
    \node (box5-1-1) at (3.25,-4.5) [draw, rectangle] {$A_1^{159}$};

    \node (box6-1) at (5.45,-3) [draw, rectangle] {$A_1^{56} \oplus A_1^{7}$};

    \node (box4-1) at (-6,-3) [draw, rectangle] {$G_2^{1} \oplus A_1^{35''}$};

    \draw (box4.south) -- (box4-1.north);
    \draw (box4.south) -- (box4-2.north);
    \draw (box4.south) -- (box4-3.north);
    \draw (box4-1.south) -- (box4-2-1.north);
    \draw (box4-2.south) -- (box4-2-1.north); 
    \draw (box4-2.south) -- (box4-3-1.north);
    \draw (box4-3-1.south) -- (box4-3-1-1.north);  
    \draw (box4-3.south) -- (box4-3-1.north); \draw (box5.south) -- (box4-3.north);
    \draw (box5.south) -- (box5-1.north);
    \draw (box5-1.south) -- (box5-1-1.north);
    \draw (box6.south) -- (box6-1.north);

    \draw (top.south) -- (box1.north);
    \draw (top.south) -- (box2.north);
    \draw (top.south) -- (box3.north);
    \draw (top.south) -- (box4.north);
    \draw (top.south) -- (box5.north);
    \draw (top.south) -- (box6.north);
    \draw (top.south) -- (box7.north);
\end{tikzpicture}
\end{subfigure}

\vspace{.5cm}

\begin{subfigure}{\textwidth}
\hspace{1.1cm}
   \begin{tikzpicture}
   \begin{scope}
    \node (top) at (0,0) [draw, rectangle] {$E_8$};
    
    \node (box1) at (-6,-1.5) [draw, rectangle] {$A_1^{1240}$};
    
    \node (box2) at (-4,-1.5) [draw, rectangle] {$A_1^{760}$};

    \node (box3) at (-2,-1.5) [draw, rectangle] {$A_1^{520}$};

    \node (box4) at (0,-1.5) [draw, rectangle] {$G_2^1 \oplus F_4^{1}$};

    \node (box5) at (2.75,-1.5) [draw, rectangle] {$A_2^{6'} \oplus A_1^{16}$};

    \node (box6) at (5,-1.5) [draw, rectangle] {$B_2^{12}$};

    \node (box4-1) at (0,-3) [draw, rectangle] {$A_1^{28} \oplus F_4^{1}$};

    \node (box4-2) at (-3,-3) [draw, rectangle] {$G_2^{1} \oplus G_2^{1} \oplus A_1^{8}$};
    \node (box4-2-1) at (-3,-4.5) [draw, rectangle] {$A_1^{28} \oplus G_2^{1} \oplus A_1^{8}$};
    \node (box4-2-1-1) at (-3,-6) [draw, rectangle] {$A_1^{28} \oplus A_1^{28} \oplus A_1^{8}$};

    \node (box4-3) at (3,-3) [draw, rectangle] {$G_2^{1} \oplus A_1^{156}$};
    \node (box4-3-1) at (3,-4.5) [draw, rectangle] {$A_1^{28} \oplus A_1^{156}$};

    \draw (box4.south) -- (box4-1.north); 
    \draw (box4.south) -- (box4-2.north); 
    \draw (box4.south) -- (box4-3.north); 
    \draw (box4-2.south) -- (box4-2-1.north); 
    \draw (box4-2-1.south) -- (box4-2-1-1.north); 
    \draw (box4-1.south) -- (box4-2-1.north); 
    \draw (box4-1.south) -- (box4-3-1.north); 
    \draw (box4-3.south) -- (box4-3-1.north); 

    \draw (top.south) -- (box1.north);
    \draw (top.south) -- (box2.north);
    \draw (top.south) -- (box3.north);
    \draw (top.south) -- (box4.north);
    \draw (top.south) -- (box5.north);
    \draw (top.south) -- (box6.north); 
\end{scope}
\end{tikzpicture}
\end{subfigure}

\caption{Inclusion relations among the $S$-subalgebras of the exceptional Lie algebras \cite[Table 39]{dynkin}.}
\label{fig:S-subalgebras}
\end{figure}

\begin{table}[H]
\begin{tabular}{ | c | c | c | c | } \hline
$\mf{g}$ & Dual Pair $(\tilde{\mf{g}_{\theta}}, \mf{z}_{\mf{g}}(\tilde{\mf{g}_{\theta}}))$ & $\Psi \setminus \theta$ & Maximal $S$-Subalgebra? \\ \hline 
$F_4$ & $(A_1^8, G_2^1)$ & $\{ \alpha_4 \}$ & Yes \\ \hline 
$E_6$ & $(G_2^1, A_2^{2''})$ & $\{ \alpha_2, \alpha_4 \}$ & Yes \\ \hline
$E_7$ & $(G_2^1, C_3^{1''})$ & $\{ \alpha_1, \alpha_3 \}$ & Yes \\
  & $(A_1^7, G_2^2)$ & $\{ \alpha_2 \}$ & Yes \\
  & $(A_1^{24}, A_1^{15})$ & $\{ \alpha_4 \}$ & Yes \\
  & $(A_1^{3''}, F_4^1)$ & $\{ \alpha_7 \}$ & Yes \\
  & $(A_1^8, G_2^1 \oplus A_1^{3''})$ & $\{ \alpha_6 \}$ & No $(\subset F_4^1 \oplus A_1^{3''})$ \\ \hline 
$E_8$ & $(A_1^{16}, A_2^{6'})$ & $\{ \alpha_2 \}$ & Yes \\ 
 & $(G_2^1, F_4^1)$ & $\{ \alpha_7, \alpha_8 \}$ & Yes \\
  & $(A_1^8 , G_2^1 \oplus G_2^1)$ & $\{ \alpha_1 \}$ & No $(\subset G_2^1 \oplus F_4^1)$ \\
\hline 
\end{tabular}
\caption{$S$-irreducible dual pairs in the exceptional Lie algebras.}
\label{table:S-irreducibles}
\end{table}

\subsection{Non-\texorpdfstring{$S$}{S}-irreducible dual pairs in the exceptional Lie algebras}

The $S$-irreducible dual pairs in the exceptional Lie algebras are shown in Table \ref{table:S-irreducibles}. To find the non-$S$-irreducible dual pairs in the exceptional Lie algebras, we will carry out the process described in Section \ref{sec:non-S-irred}.

\subsubsection{Dual pairs in \texorpdfstring{$G_2$}{G2}}

Recall that $G_2$ has no $S$-irreducible dual pairs (see Table \ref{table:S-irreducibles}). Additionally, recall that by Table \ref{table:maximal-regular} and Proposition \ref{prop:maximal-reg-is-dp}, $(A_1, \tilde{A_1})$ is a dual pair in $G_2$. By Theorem \ref{thm:non-S-irred}, to find any remaining non-$S$-irreducible dual pairs, it suffices to consider the proper $S$-subalgebras of $A_1 \oplus \tilde{A_1}$, of which there are none. Therefore, $(A_1, \tilde{A_1})$ is the only dual pair in $G_2$.

\begin{table}[H]
\begin{tabular}{| c | c | c |} \hline
Max-Rank Reg.~Subalgebra & Dual Pair & How to Verify \\ \hline
$A_1 \oplus \tilde{A_1}$ & $(A_1, \tilde{A_1})$ & Proposition \ref{prop:maximal-reg-is-dp} \\ \hline
\end{tabular}
\caption{A complete list of dual pairs in $G_2$.}
\end{table}

\newpage

\subsubsection{Dual pairs in \texorpdfstring{$F_4$}{F4}} \label{subsec:F4}

By Table \ref{table:S-irreducibles}, $(A_1^8, G_2^1)$ is an $S$-irreducible dual pair in $F_4$. Additionally, as indicated in Table \ref{table:max-reg-dps}, we have that $(A_1, C_3)$ and $(A_2, \tilde{A_2})$ are dual pairs in $F_4$. By Theorem \ref{thm:non-S-irred}, to find the remaining non-$S$-irreducible dual pairs of $F_4$, it suffices to consider the $S$-subalgebras of the maximal-rank regular subalgebras of $F_4$ and to eliminate or confirm the resulting candidate dual pairs. The results of this analysis are summarized in Table \ref{table:F4}.  

\begin{table}[H]
\begin{center}
\begin{tabular}{| c | c | c |} \hline 
Max-Rank Reg.~Subalgebra & Dual Pair & How to Verify \\ \hline
$B_4$ & $(A_1^6, A_1^6)$ & \cite[p.~401]{Carter} \\ \hline 
$A_2 \oplus \tilde{A_2}$ & $(A_2, \tilde{A_2})$ & Proposition \ref{prop:maximal-reg-is-dp} \\ \hline
$C_3 \oplus A_1$ & $(C_3, A_1)$ & Proposition \ref{prop:maximal-reg-is-dp} \\ 
 & $(A_1^3, A_1^8 \oplus A_1)$ & Table \ref{table:dps-in-Cn}, \cite[p.~401]{Carter} \\ \hline 
 $A_3 \oplus \tilde{A_1}$ & $(A_3, \tilde{A_1})$ & Example \ref{ex:A3-A1tilde-dp-F4} \\ \hline
$B_2 \oplus 2A_1$ & $(B_2, 2A_1)$ & Table \ref{table:dps-in-Cn}, Lemma \ref{lem:dimension-of-cent}/\cite[Table 25]{dynkin} \\ \hline
\end{tabular}
\end{center}
\caption{A complete list of the non-$S$-irreducible dual pairs in $F_4$.}
\label{table:F4}
\end{table}

\subsubsection{Dual pairs in \texorpdfstring{$E_6$}{E6}}

Recall from Table \ref{table:S-irreducibles} that $(G_2^1, A_2^{2''})$ is an $S$-irreducible dual pair in $E_6$. Additionally, as indicated in Table \ref{table:max-reg-dps}, we have that $(A_1, A_5)$ and $(A_2, 2 A_2)$ are dual pairs in $E_6$. By Theorem \ref{thm:non-S-irred}, to find the remaining non-$S$-irreducible dual pairs of $E_6$, it suffices to consider the $S$-subalgebras of the maximal-rank regular subalgebras of $E_6$ and to eliminate or confirm the resulting candidate dual pairs. Carrying out this process, we get a complete list of non-$S$-irreducible dual pairs in $E_6$ as shown in Table \ref{table:E6}.

\begin{table}[H]
\begin{center}
\begin{tabular}{| c | c | c |} \hline 
Max-Rank Reg.~Subalgebra & Dual Pair & How to Verify \\ \hline  
$A_5 \oplus A_1$ & $(A_5, A_1)$ & Proposition \ref{prop:maximal-reg-is-dp} \\
 & $(A_1^3, A_2^{2''} \oplus A_1)$ & Example \ref{ex:E6A1^3-dp} \\ \hline 
$3A_2$ & $(A_2, 2 A_2)$ & Proposition \ref{prop:maximal-reg-is-dp} \\ \hline
\end{tabular}
\end{center}
\caption{A complete list of the non-$S$-irreducible dual pairs in $E_6$.}
\label{table:E6}
\end{table}

\subsubsection{Dual pairs in \texorpdfstring{$E_7$}{E7}}

By Table \ref{table:S-irreducibles}, $(G_2^1, C_3^{1''})$, $(A_1^7, G_2^2)$, $(A_1^{24}, A_1^{15})$, $(A_1^{3''}, F_4^1)$, and $(A_1^8, G_2^1 \oplus A_1^{3''})$ are $S$-irreducible dual pairs in $E_7$. Additionally, as indicated in Table \ref{table:max-reg-dps}, we have that $(A_1, D_6)$ and $(A_2, A_5^{''})$ are dual pairs in $E_7$. By Theorem \ref{thm:non-S-irred}, to find the remaining non-$S$-irreducible dual pairs of $E_7$, it suffices to consider the $S$-subalgebras of the maximal-rank regular subalgebras of $E_7$ and to eliminate or confirm the resulting candidate dual pairs. The results of this analysis are summarized in Table \ref{table:E7}. 

\begin{table}[H]
\begin{center} 
\begin{tabular}{| c | c | c |} \hline
Max-Rank Reg.~Subalgebra & Dual Pair & How to Verify \\ \hline
$D_6 \oplus A_1$ & $(D_6, A_1)$ & Proposition \ref{prop:maximal-reg-is-dp} \\ 
 & $(B_2^1, B_3^1 \oplus A_1)$ & Table \ref{table:dps-in-Dn}, Lemma \ref{lem:dimension-of-cent}/\cite[Table 25]{dynkin} \\
 & $(B_3^1, (B_2^1 \oplus A_1)^{''})$ & Table \ref{table:dps-in-Dn}, Lemma \ref{lem:dimension-of-cent}/\cite[Table 25]{dynkin} \\
 & $(C_3^{1'}, A_1^{3'} \oplus A_1)$ & Table \ref{table:dps-in-Dn}, Remark \ref{rmk:rep-theory-elims} \\
 & $(A_1^{3'}, C_3^{1'} \oplus A_1)$ & Table \ref{table:dps-in-Dn}, Lemma \ref{lem:dimension-of-cent}/\cite[Table 25]{dynkin} \\
 & $(A_1^{3'} \oplus A_1^{3''}, A_1^8 \oplus A_1)$ & Previous entry, Tables \ref{table:dps-in-Cn} \& \ref{table:dps-in-Dn} \\
 & $(B_4^1, (A_1^2 \oplus A_1)^{''})$ &  Table \ref{table:dps-in-Dn}, Lemma \ref{lem:dimension-of-cent}/\cite[Table 25]{dynkin} \\
 & $(A_1^2, B_4^1 \oplus A_1)$ & Table \ref{table:dps-in-Dn}, \cite[p.~403]{Carter} \\
 & $(A_1^6, A_1^6 \oplus A_1^2 \oplus A_1)$ & Table \ref{table:dps-in-Dn}, \cite[p.~403]{Carter} \\
  & $(A_1^6 \oplus A_1^2, A_1^6 \oplus A_1)$ & Previous entry, \cite[p.~403]{Carter} \\ \hline  
$A_5'' \oplus A_2$ & $(A_5'', A_2)$ & Proposition \ref{prop:maximal-reg-is-dp} \\
 & $(A_2^{2'}, A_1^{3''} \oplus A_2)$ & Example \ref{ex:E7-A2^2-A1^3-A2} \\ \hline
$D_4 \oplus 3A_1$ & $(D_4, 3A_1)$ & Table \ref{table:dps-in-Dn}, Lemma \ref{lem:dimension-of-cent}/\cite[Table 25]{dynkin} \\
 & $(D_4 \oplus A_1, 2A_1)$ & Proposition \ref{prop:maximal-reg-is-dp} for $D_6$ \\ 
 & $((B_2^1 \oplus A_1)^{'}, A_1^2 \oplus 2A_1 )$ & Previous entry, Tables \ref{table:dps-in-Dn} \& \ref{table:B2-subalgebras} \\ 
 & $((A_1^2 \oplus A_1)^{'}, B_2^1 \oplus 2A_1)$ & Table \ref{table:dps-in-Dn}, $(D_4 \oplus A_1, 2A_1)$ \\ \hline
$2A_3 \oplus A_1$ & $(A_3, A_3 \oplus A_1)$ & Table \ref{table:dps-in-Dn}, Lemma \ref{lem:dimension-of-cent}/\cite[Table 25]{dynkin} \\ \hline
\end{tabular}
\end{center}
\caption{A complete list of the non-$S$-irreducible dual pairs in $E_7$. Here, $(B_2^1 \oplus A_1)^{'}$ (resp.~$(B_2^1 \oplus A_1)^{''}$) refers to the subalgebra of $E_7$ that restricts to $B_2^{1'} \oplus A_1$ (resp.~$B_2^{1''} \oplus A_1$) in $D_6 \oplus A_1$. Similarly, $(A_1^2 \oplus A_1)^{'}$ (resp.~$(A_1^2 \oplus A_1)^{''}$) restricts to $A_1^{2'} \oplus A_1$ (resp.~$A_1^{2''} \oplus A_1$) in $D_6 \oplus A_1$.}
\label{table:E7}
\end{table}

\subsubsection{Dual pairs in \texorpdfstring{$E_8$}{E8}}

Finally, recall from Table \ref{table:S-irreducibles} that $(A_1^{16}, A_2^{6'})$, $(G_2^1, F_4^1)$, and $(A_1^8, G_2^1 \oplus G_2^1)$ are $S$-irreducible dual pairs in $E_8$. Additionally, as indicated in Table \ref{table:max-reg-dps}, we have that $(A_4, A_4)$, $(A_2, E_6)$, and $(A_1, E_7)$ are dual pairs in $E_8$. By considering the list of admissible subalgebras of $E_8$, we also get that $(A_1^{24}, A_1^{15}\oplus A_1)$ and $(G_2^2, A_1^7 \oplus A_1)$ are non-$S$-irreducible dual pairs \cite[Section 6.10]{rubenthaler}. By Theorem \ref{thm:non-S-irred}, to find the remaining non-$S$-irreducible dual pairs of $E_8$, it suffices to consider the $S$-subalgebras of the maximal-rank regular subalgebras of $E_8$ and to eliminate or confirm the resulting candidate dual pairs. Carrying out this process, we get the following complete list of non-$S$-irreducible dual pairs in $E_8$. 

\newpage

\begin{center}
\begin{longtable}[H]{| c | c | c |} \hline
Max-Rank Reg.~Subalgebra & Dual Pair & How to Verify \\ \hline
$D_8$ & $(A_1^2, B_6^1)$ & \cite[p.~405]{Carter}, Lemma \ref{lem:dimension-of-cent}/\cite[Table 25]{dynkin} \\ 
 & $(A_1^6, A_1^6 \oplus B_3^1)$ & \cite[p.~405]{Carter}, $(B_3^1, B_4^1)$, Table \ref{table:dps-in-Bn} \\
 & $(A_1^{10''}, B_2^{3''})$ & \cite[p.~405]{Carter}, Remark \ref{rmk:rep-theory-elims} \\
 & $(B_2^1, B_5^1)$ & Lemma \ref{lem:dimension-of-cent}/\cite[Table 25]{dynkin} \\
 & $(B_3^1, B_4^1)$ & Example \ref{ex:E8-B3-B4} \\
 & $(B_2^{2''}, B_2^{2''})$ & Remark \ref{rmk:rep-theory-elims} \\ 
 & $(C_4^{1''}, A_1^{4'})$ & \cite[p.~405]{Carter}, Remark \ref{rmk:rep-theory-elims} \\ \hline 
$E_7 \oplus A_1$ & $(E_7, A_1)$ & Proposition \ref{prop:maximal-reg-is-dp} \\
 & $(C_3^{1}, G_2^1 \oplus A_1)$ & Table \ref{table:S-irreducibles}, Lemma \ref{lem:dimension-of-cent}/\cite[Table 25]{dynkin} \\ 
 & $(A_1^{3}, F_4^1 \oplus A_1)$ & Table \ref{table:S-irreducibles}, \cite[p.~405]{Carter} \\
 & $(G_2^2, A_1^7 \oplus A_1)$ & \cite[Section 6.10]{rubenthaler} \\
 & $(A_1^7, G_2^2 \oplus A_1)$ & Table \ref{table:S-irreducibles}, \cite[p.~405]{Carter} \\
 & $(A_1^{24}, A_1^{15} \oplus A_1)$ & \cite[Section 6.10]{rubenthaler} \\
 & $(A_1^{15}, A_1^{24} \oplus A_1)$ & Table \ref{table:S-irreducibles}, \cite[p.~405]{Carter} \\
 & $(G_2^1 \oplus A_1^{3}, A_1^8 \oplus A_1)$ & $(A_1^3, F_4^1 \oplus A_1)$, Table \ref{table:S-irreducibles}, \cite[p.~403]{Carter}  \\ \hline
$E_6 \oplus A_2$ & $(E_6, A_2)$ & Proposition \ref{prop:maximal-reg-is-dp} \\
 & $(A_2^{2}, G_2^1 \oplus A_2)$ & Table \ref{table:S-irreducibles}, Lemma \ref{lem:dimension-of-cent}/\cite[Table 25]{dynkin} \\ \hline 
$2A_4$ & $(A_4, A_4)$ & Proposition \ref{prop:maximal-reg-is-dp} \\ \hline
$D_5 \oplus A_3$ & $(D_5, A_3)$ & Lemma \ref{lem:dimension-of-cent}/\cite[Table 25]{dynkin} \\ \hline
$A_5 \oplus A_2 \oplus A_1$ & $(A_5, A_2 \oplus A_1)$ & Table \ref{table:E7}, Lemma \ref{lem:dimension-of-cent}/\cite[Table 25]{dynkin} \\ 
 & $(A_1^3 \oplus A_2, A_2^2 \oplus A_1)$ & Tables \ref{table:E6} \& \ref{table:E7} \\ \hline
$D_6 \oplus 2A_1$ & $(D_6, 2A_1)$ & Table \ref{table:E7}, Lemma \ref{lem:dimension-of-cent}/\cite[Table 25]{dynkin} \\ 
 & $(B_4^1 \oplus A_1, A_1^2 \oplus A_1)$ & Table \ref{table:E7} \\
 & $(B_3^1 \oplus A_1, B_2^1 \oplus A_1)$ & Table \ref{table:E7} \\
 & $(A_1^6 \oplus A_1^2, A_1^6 \oplus 2A_1)$ & $(A_1^2, B_6^1)$, Tables \ref{table:dps-in-Bn} \& \ref{table:dps-in-Dn} \\
 & $(A_1^6 \oplus A_1^2 \oplus A_1, A_1^6 \oplus A_1)$ & Table \ref{table:E7} \\ 
 & $(A_1^8 \oplus 2A_1, 2A_1^3)$ & $(A_1^3, F_4^1 \oplus A_1)$, Tables \ref{table:F4} \& \ref{table:dps-in-Dn} \\
 & $((A_1^3 \oplus A_1)^{'}, (C_3^1 \oplus A_1)^{'})$ & \cite[p.~403]{Carter}, \cite[Table 25]{dynkin} \\ \hline
$4A_2$ & $(2 A_2, 2A_2)$ & Table \ref{table:E6} \\ \hline 
 $2D_4$ & $(D_4, D_4)$ & Lemma \ref{lem:dimension-of-cent}/\cite[Table 25]{dynkin} \\ 
 & $( (B_2^1 \oplus A_1^2)^{'}, (B_2^1 \oplus A_1^2)^{'} )$ & $(A_1^2, B_6^1)$, $(B_2^1, B_5^1)$, Table \ref{table:B2-subalgebras} \\ \hline
$D_4 \oplus 4A_1$ & $(D_4 \oplus A_1, 3A_1)$ & Tables \ref{table:E7} \& \ref{table:dps-in-Dn} \\ \hline \caption{A complete list of the non-$S$-irreducible dual pairs in $E_8$. Here, $(A_1^3 \oplus A_1)^{'}$ and $(C_3^1 \oplus A_1)^{'}$ refer to the subalgebras of $E_8$ that restrict to $A_1^{3'}$ and $C_3^{1'}$, respectively, in $E_7 \oplus A_1$. Similarly, $(B_2^1 \oplus A_1^2)^{'}$ restricts to $B_2^{1'}$ in $B_6$ and to $A_1^{2'}$ in $B_5$.} \label{table:E8}
\end{longtable}
\end{center}

\section*{Acknowledgements}

The author would like to thank David Vogan for suggesting this topic of study and for his helpful comments on the manuscript.

\bibliography{biblio} 
\bibliographystyle{plain}

\newpage

\section*{Appendix A: Subalgebras of type $A_1$}

In this appendix, we describe all of the conjugacy classes of subalgebras of type $A_1$ in the complex simple Lie algebras of rank up to 6.\footnote{Much of this information is also available in \cite{S-subalgebras}, which was used as a reference when compiling these tables. However, while \cite{S-subalgebras} is concerned with $O(n,\C)$ conjugacy classes, we are concerned with $SO(n,\C)$ conjugacy classes. For example, the subalgebras $A_1^{2'}$ and $A_1^{2''}$ of $D_4$ appear in two non-conjugate dual pairs of $SO(8,\C)$ (see Table \ref{table:dps-in-Dn}) but they are not listed as distinct conjugacy classes in \cite{S-subalgebras}.}

\begin{multicols}{2}
\subsection*{Subalgebras of type $A_1$ in $A_2$}

\begin{center}
    \begin{tabular}{c|c|c}
        Index & Defining Vector & Diagram  \\ \hline
        $1$ & $[1,0]$ & $[1,1]$ \\
        4 & $[2,0]$ & $[2,2]$ 
    \end{tabular}
\end{center}

\subsection*{Subalgebras of type $A_1$ in $A_3$}

\begin{center}
    \begin{tabular}{c|l|c}
        Index & Defining Vector & Diagram  \\ \hline
        $1$ & \;\;\;\;\;\;$[1,0,0]$ & $[1,0,1]$ \\
        2 & \;\;\;\;\;\;$[1,1,-1]$ & $[0,2,0]$ \\
        4 & \;\;\;\;\;\;$[2,0,0]$ & $[2,0,2]$ \\
        10 & \;\;\;\;\;\;$[3,1,-1]$ & $[2,2,2]$
    \end{tabular}
\end{center}

\subsection*{Subalgebras of type $A_1$ in $A_4$}

\begin{center}
    \begin{tabular}{c|l|c}
        Index & Defining Vector & Diagram  \\ \hline
        $1$ & \;\;\;\;$[1,0,0,0]$ & $[1,0,0,1]$ \\
        2 & \;\;\;\;$[1,1,0,-1]$ & $[0,1,1,0]$ \\
        4 & \;\;\;\;$[2,0,0,0]$ & $[2,0,0,2]$ \\
        5 & \;\;\;\;$[2,1,0,-1]$ & $[1,1,1,1]$ \\
        10 & \;\;\;\;$[3,1,0,-1]$ & $[2,1,1,2]$ \\
        20 & \;\;\;\;$[4,2,0,-2]$ & $[2,2,2,2]$
    \end{tabular}
\end{center}

\vspace{1cm}

\;

\columnbreak

\subsection*{Subalgebras of type $A_1$ in $A_5$}

\begin{center}
    \begin{tabular}{c|l|c}
        Index & Defining Vector & Diagram  \\ \hline
        $1$ & \;$[1,0,0,0,0]$ & $[1,0,0,0,1]$ \\
        2 & \;$[1,1,0,0,-1]$ & $[0,1,0,1,0]$ \\
        3 & \;$[1,1,1,-1,-1]$ & $[0,0,2,0,0]$ \\
        4 & \;$[2,0,0,0,0]$ & $[2,0,0,0,2]$ \\
        5 & \;$[2,1,0,0,-1]$ & $[1,1,0,1,1]$ \\
        8 & \;$[2,2,0,0,-2]$ & $[0,2,0,2,0]$ \\
        10 & \;$[3,1,0,0,-1]$ & $[2,1,0,1,2]$ \\
        11 & \;$[3,1,1,-1,-1]$ & $[2,0,2,0,2]$ \\
        20 & \;$[4,2,0,0,-2]$ & $[2,2,0,2,2]$ \\
        35 & \;$[5,3,1,-1,-3]$ & $[2,2,2,2,2]$
    \end{tabular}
\end{center}

\subsection*{Subalgebras of type $A_1$ in $A_6$}

\begin{center}
    \begin{tabular}{c|l|c}
        Index & \;Defining Vector & Diagram  \\ \hline
        $1$ & $[1,0,0,0,0,0]$ & $[1,0,0,0,0,1]$ \\
         2 & $[1,1,0,0,0,-1]$ & $[0,1,0,0,1,0]$ \\
         3 & $[1,1,1,0,-1,-1]$ & $[0,0,1,1,0,0]$ \\
         4 & $[2,0,0,0,0,0]$ & $[2,0,0,0,0,2]$ \\
         5 & $[2,1,0,0,0,-1]$ & $[1,1,0,0,1,1]$ \\
         6 & $[2,1,1,0,-1,-1]$ & $[1,0,1,1,0,1]$ \\
         8 & $[2,2,0,0,0,-2]$ & $[0,2,0,0,2,0]$ \\
         10 & $[3,1,0,0,0,-1]$ & $[2,1,0,0,1,2]$ \\
         11 & $[3,1,1,0,-1,-1]$ & $[2,0,1,1,0,2]$ \\
         14 & $[3,2,1,0,-1,-2]$ & $[1,1,1,1,1,1]$ \\
         20 & $[4,2,0,0,0,-2]$ & $[2,2,0,0,2,2]$ \\
         21 & $[4,2,1,0,-1,-2]$ & $[2,1,1,1,1,2]$ \\
         35 & $[5,3,1,0,-1,-3]$ & $[2,2,1,1,2,2]$ \\
         56 & $[6,4,2,0,-2,-4]$ & $[2,2,2,2,2,2]$
    \end{tabular}
\end{center}
\end{multicols}

\subsection*{Subalgebras of type $A_1$ in $B_2$}

\begin{multicols}{2}
\begin{center}
    \begin{tabular}{c|c|c}
        Index & Defining Vector & Diagram  \\ \hline
        $1$ & $[1,1]$ & $[0,1]$ \\
         2 & $[2,0]$ & $[2,0]$ \\
         10 & $[4,2]$ & $[2,2]$ 
    \end{tabular}
\end{center}

\vspace{1cm}

\subsection*{Subalgebras of type $A_1$ in $B_3$}

\begin{center}
    \begin{tabular}{c|c|c}
        Index & Defining Vector & Diagram  \\ \hline
        $1$ & $[1,1,0]$ & $[0,1,0]$ \\
         2 & $[2,0,0]$ & $[2,0,0]$ \\
         3 & $[2,1,1]$ & $[1,0,1]$ \\
         4 & $[2,2,0]$ & $[0,2,0]$ \\
         10 & $[4,2,0]$ & $[2,2,0]$ \\
         28 & $[6,4,2]$ & $[2,2,2]$
    \end{tabular}
\end{center}

\vspace{1cm}

\subsection*{Subalgebras of type $A_1$ in $B_4$}

\begin{center}
    \begin{tabular}{l|c|c}
        Index & Defining Vector & Diagram  \\ \hline
        \;\;\;$1$ & $[1,1,0,0]$ & $[0,1,0,0]$ \\
        \;\;\;$2'$ & $[1,1,1,1]$ & $[0,0,0,1]$ \\
        \;\;\;$2''$ & $[2,0,0,0]$ & $[2,0,0,0]$ \\
         \;\;\;3 & $[2,1,1,0]$ & $[1,0,1,0]$ \\
         \;\;\;4 & $[2,2,0,0]$ & $[0,2,0,0]$ \\
         \;\;\;6 & $[2,2,2,0]$ & $[0,0,2,0]$ \\
         \;\;$10'$ & $[3,3,1,1]$ & $[0,2,0,1]$ \\
         \;\;$10''$ & $[4,2,0,0]$ & $[2,2,0,0]$ \\
         \;\;11 & $[4,2,1,1]$ & $[2,1,0,1]$ \\
         \;\;12 & $[4,2,2,0]$ & $[2,0,2,0]$ \\
         \;\;28 & $[6,4,2,0]$ & $[2,2,2,0]$ \\
         \;\;60 & $[8,6,4,2]$ & $[2,2,2,2]$
    \end{tabular}
\end{center}

\columnbreak

\subsection*{Subalgebras of type $A_1$ in $B_5$}

\begin{center}
    \begin{tabular}{l|c|c}
        Index & Defining Vector & Diagram  \\ \hline
        \;\;\;$1$ & $[1,1,0,0,0]$ & $[0,1,0,0,0]$ \\
        \;\;\;$2'$ & $[1,1,1,1,0]$ & $[0,0,0,1,0]$ \\
        \;\;\;$2''$ & $[2,0,0,0,0]$ & $[2,0,0,0,0]$ \\
        \;\;\;3 & $[2,1,1,0,0]$ & $[1,0,1,0,0]$ \\
        \;\;\;$4'$ & $[2,1,1,1,1]$ & $[1,0,0,0,1]$ \\
        \;\;\;$4''$ & $[2,2,0,0,0]$ & $[0,2,0,0,0]$ \\
        \;\;\;5 & $[2,2,1,1,0]$ & $[0,1,0,1,0]$ \\
        \;\;\;6 & $[2,2,2,0,0]$ & $[0,0,2,0,0]$ \\
        \;\;$10'$ & $[3,3,1,1,0]$ & $[0,2,0,1,0]$ \\
        \;\;$10''$ & $[4,2,0,0,0]$ & $[2,2,0,0,0]$ \\
        \;\;11 & $[4,2,1,1,0]$ & $[2,1,0,1,0]$ \\
        \;\;$12'$ & $[3,3,2,1,1]$ & $[0,1,1,0,1]$ \\ 
        \;\;$12''$ & $[4,2,2,0,0]$ & $[2,0,2,0,0]$ \\  
        \;\;14 & $[4,2,2,2,0]$ & $[2,0,0,2,0]$ \\
        \;\;20 & $[4,4,2,2,0]$ & $[0,2,0,2,0]$ \\
        \;\;28 & $[6,4,2,0,0]$ & $[2,2,2,0,0]$ \\
        \;\;29 & $[6,4,2,1,1]$ & $[2,2,1,0,1]$ \\
        \;\;30 & $[6,4,2,2,0]$ & $[2,2,0,2,0]$ \\
        \;\;60 & $[8,6,4,2,0]$ & $[2,2,2,2,0]$ \\
        \;110 & $[10,8,6,4,2]$ & $[2,2,2,2,2]$ 
    \end{tabular}
\end{center}
\end{multicols}

\newpage

\subsection*{Subalgebras of type $A_1$ in $B_6$}

\begin{center}
    \begin{tabular}{l|c|c||l|c|c}
Index & Defining Vector & Diagram & Index & Defining Vector & Diagram \\ \hline
\;\;\;1 & $[ 1, 1, 0, 0, 0, 0 ]$ & $ [0,1,0,0,0,0]$ & \;\;$12'''$ & $[ 4, 2, 2, 0, 0, 0 ]$ & $ [2,0,2,0,0,0]$ \\
\;\;\;$2'$ & $[ 1, 1, 1, 1, 0, 0 ]$ & $ [0,0,0,1,0,0]$ & \;\;13 & $[ 4, 2, 2, 1, 1, 0 ]$ & $ [2,0,1,0,1,0]$ \\
\;\;\;$2''$ & $[ 2, 0, 0, 0, 0, 0 ]$ & $ [2,0,0,0,0,0]$ & \;\;14 & $[ 4, 2, 2, 2, 0, 0 ]$ & $ [2,0,0,2,0,0]$ \\
\;\;\;$3'$ & $[ 1, 1, 1, 1, 1, 1 ]$ & $ [0,0,0,0,0,1]$ & \;\;$20'$ & $[ 4, 3, 3, 2, 1, 1 ]$ & $ [1,0,1,1,0,1]$ \\
\;\;\;$3''$ & $[ 2, 1, 1, 0, 0, 0 ]$ & $ [1,0,1,0,0,0]$ & \;\;$20''$ & $[ 4, 4, 2, 2, 0, 0 ]$ & $ [0,2,0,2,0,0]$ \\
\;\;\;$4'$ & $[ 2, 1, 1, 1, 1, 0 ]$ & $ [1,0,0,0,1,0]$ & \;\;22 & $[ 4, 4, 2, 2, 2, 0 ]$ & $ [0,2,0,0,2,0]$ \\
\;\;\;$4''$ & $[ 2, 2, 0, 0, 0, 0 ]$ & $ [0,2,0,0,0,0]$ & \;\;28 & $[ 6, 4, 2, 0, 0, 0 ]$ & $ [2,2,2,0,0,0]$ \\
\;\;\;5 & $[ 2, 2, 1, 1, 0, 0 ]$ & $ [0,1,0,1,0,0]$ & \;\;29 & $[ 6, 4, 2, 1, 1, 0 ]$ & $ [2,2,1,0,1,0]$ \\
\;\;\;6 & $[ 2, 2, 2, 0, 0, 0 ]$ & $ [0,0,2,0,0,0]$ & \;\;30 & $[ 6, 4, 2, 2, 0, 0 ]$ & $ [2,2,0,2,0,0]$ \\
\;\;\;7 & $[ 2, 2, 2, 1, 1, 0 ]$ & $ [0,0,1,0,1,0]$ & \;\;32 & $[ 6, 4, 2, 2, 2, 0 ]$ & $ [2,2,0,0,2,0]$ \\
\;\;\;8 & $[ 2, 2, 2, 2, 0, 0 ]$ & $ [0,0,0,2,0,0]$ & \;\;35 & $[ 5, 5, 3, 3, 1, 1 ]$ & $ [0,2,0,2,0,1]$ \\
\;\;$10'$ & $[ 3, 3, 1, 1, 0, 0 ]$ & $ [0,2,0,1,0,0]$ & \;\;38 & $[ 6, 4, 4, 2, 2, 0 ]$ & $ [2,0,2,0,2,0]$ \\
\;\;$10''$ & $[ 4, 2, 0, 0, 0, 0 ]$ & $ [2,2,0,0,0,0]$ & \;\;60 & $[ 8, 6, 4, 2, 0, 0 ]$ & $ [2,2,2,2,0,0]$ \\
\;\;$11'$ & $[ 3, 3, 1, 1, 1, 1 ]$ & $ [0,2,0,0,0,1]$ & \;\;61 & $[ 8, 6, 4, 2, 1, 1 ]$ & $ [2,2,2,1,0,1]$ \\
\;\;$11''$ & $[ 4, 2, 1, 1, 0, 0 ]$ & $ [2,1,0,1,0,0]$ & \;\;62 & $[ 8, 6, 4, 2, 2, 0 ]$ & $ [2,2,2,0,2,0]$ \\
\;\;$12'$ & $[ 3, 3, 2, 1, 1, 0 ]$ & $ [0,1,1,0,1,0]$ & \;110 & $[ 10,  8,  6,  4,  2,  0 ]$ & $ [2,2,2,2,2,0]$ \\
\;\;$12''$ & $[ 4, 2, 1, 1, 1, 1 ]$ & $ [2,1,0,0,0,1]$ & \;182 & $[ 12, 10,  8,  6,  4,  2 ]$ & $ [2,2,2,2,2,2]$ 
    \end{tabular}
\end{center}

\newpage

\begin{multicols}{2}
    
\subsection*{Subalgebras of type $A_1$ in $C_2$}

\begin{center}
    \begin{tabular}{c|c|c}
Index & Defining Vector & Diagram \\ \hline
1 & $[ 1, 0 ]$ & $ [1,0]$ \\
2 & $[ 1, 1 ]$ & $ [0,2]$ \\
10 & $[ 3, 1 ]$ & $ [2,2]$ 
    \end{tabular}
\end{center}

\vspace{.75cm}

\subsection*{Subalgebras of type $A_1$ in $C_3$}

\begin{center}
    \begin{tabular}{c|c|c}
Index & Defining Vector & Diagram \\ \hline
1 & $[ 1, 0, 0 ]$ & $ [1,0,0]$ \\
2 & $[ 1, 1, 0 ]$ & $ [0,1,0]$ \\
3 & $[ 1, 1, 1 ]$ & $ [0,0,2]$ \\
8 & $[ 2, 2, 0 ]$ & $ [0,2,0]$ \\
10 & $[ 3, 1, 0 ]$ & $ [2,1,0]$ \\
11 & $[ 3, 1, 1 ]$ & $ [2,0,2]$ \\
35 & $[ 5, 3, 1 ]$ & $ [2,2,2]$ 
    \end{tabular}
\end{center}

\vspace{.75cm}

\subsection*{Subalgebras of type $A_1$ in $C_4$}

\begin{center}
    \begin{tabular}{c|c|c}
Index & Defining Vector & Diagram \\ \hline
1 & $[ 1, 0, 0, 0 ]$ & $ [1,0,0,0]$ \\
2 & $[ 1, 1, 0, 0 ]$ & $ [0,1,0,0]$ \\
3 & $[ 1, 1, 1, 0 ]$ & $ [0,0,1,0]$ \\
4 & $[ 1, 1, 1, 1 ]$ & $ [0,0,0,2]$ \\
8 & $[ 2, 2, 0, 0 ]$ & $ [0,2,0,0]$ \\
9 & $[ 2, 2, 1, 0 ]$ & $ [0,1,1,0]$ \\
10 & $[ 3, 1, 0, 0 ]$ & $ [2,1,0,0]$ \\
11 & $[ 3, 1, 1, 0 ]$ & $ [2,0,1,0]$ \\
12 & $[ 3, 1, 1, 1 ]$ & $ [2,0,0,2]$ \\
20 & $[ 3, 3, 1, 1 ]$ & $ [0,2,0,2]$ \\
35 & $[ 5, 3, 1, 0 ]$ & $ [2,2,1,0]$ \\
36 & $[ 5, 3, 1, 1 ]$ & $ [2,2,0,2]$ \\
84 & $[ 7, 5, 3, 1 ]$ & $ [2,2,2,2]$ \\
    \end{tabular}
\end{center}

\columnbreak

\subsection*{Subalgebras of type $A_1$ in $C_5$}

\begin{center}
    \begin{tabular}{l|c|c}
Index & Defining Vector & Diagram \\ \hline
\;\;\;1 & $[ 1, 0, 0, 0, 0 ]$ & $ [1,0,0,0,0]$ \\
\;\;\;2 & $[ 1, 1, 0, 0, 0 ]$ & $ [0,1,0,0,0]$ \\
\;\;\;3 & $[ 1, 1, 1, 0, 0 ]$ & $ [0,0,1,0,0]$ \\
\;\;\;4 & $[ 1, 1, 1, 1, 0 ]$ & $ [0,0,0,1,0]$ \\
\;\;\;5 & $[ 1, 1, 1, 1, 1 ]$ & $ [0,0,0,0,2]$ \\
\;\;\;8 & $[ 2, 2, 0, 0, 0 ]$ & $ [0,2,0,0,0]$ \\
\;\;\;9 & $[ 2, 2, 1, 0, 0 ]$ & $ [0,1,1,0,0]$ \\
\;\;\;$10'$ & $[ 2, 2, 1, 1, 0 ]$ & $ [0,1,0,1,0]$ \\
\;\;\;$10''$ & $[ 3, 1, 0, 0, 0 ]$ & $ [2,1,0,0,0]$ \\
\;\;\;11 & $[ 3, 1, 1, 0, 0 ]$ & $ [2,0,1,0,0]$ \\
\;\;\;12 & $[ 3, 1, 1, 1, 0 ]$ & $ [2,0,0,1,0]$ \\
\;\;\;13 & $[ 3, 1, 1, 1, 1 ]$ & $ [2,0,0,0,2]$ \\
\;\;\;18 & $[ 3, 2, 2, 1, 0 ]$ & $ [1,0,1,1,0]$ \\
\;\;\;20 & $[ 3, 3, 1, 1, 0 ]$ & $ [0,2,0,1,0]$ \\
\;\;\;21 & $[ 3, 3, 1, 1, 1 ]$ & $ [0,2,0,0,2]$ \\
\;\;\;35 & $[ 5, 3, 1, 0, 0 ]$ & $ [2,2,1,0,0]$ \\
\;\;\;36 & $[ 5, 3, 1, 1, 0 ]$ & $ [2,2,0,1,0]$ \\
\;\;\;37 & $[ 5, 3, 1, 1, 1 ]$ & $ [2,2,0,0,2]$ \\
\;\;\;40 & $[ 4, 4, 2, 2, 0 ]$ & $ [0,2,0,2,0]$ \\
\;\;\;45 & $[ 5, 3, 3, 1, 1 ]$ & $ [2,0,2,0,2]$ \\
\;\;\;84 & $[ 7, 5, 3, 1, 0 ]$ & $ [2,2,2,1,0]$ \\
\;\;\;85 & $[ 7, 5, 3, 1, 1 ]$ & $ [2,2,2,0,2]$ \\
\;\;165 & $[ 9, 7, 5, 3, 1 ]$ & $ [2,2,2,2,2]$ \\
    \end{tabular}
\end{center}

\end{multicols}

\newpage

\subsection*{Subalgebras of type $A_1$ in $C_6$}

\begin{center}
    \begin{tabular}{l|c|c||l|c|c}
        Index & Defining Vector & Diagram & Index & Defining Vector & Diagram \\ \hline
\;\;\;1 & $[ 1, 0, 0, 0, 0, 0 ]$ & $ [1,0,0,0,0,0]$ & \;\;22 & $[ 3, 3, 1, 1, 1, 1 ]$ & $ [0,2,0,0,0,2]$ \\
\;\;\;2 & $[ 1, 1, 0, 0, 0, 0 ]$ & $ [0,1,0,0,0,0]$ & \;\;30 & $[ 3, 3, 3, 1, 1, 1 ]$ & $ [0,0,2,0,0,2]$ \\
\;\;\;3 & $[ 1, 1, 1, 0, 0, 0 ]$ & $ [0,0,1,0,0,0]$ & \;\;35 & $[ 5, 3, 1, 0, 0, 0 ]$ & $ [2,2,1,0,0,0]$ \\
\;\;\;4 & $[ 1, 1, 1, 1, 0, 0 ]$ & $ [0,0,0,1,0,0]$ & \;\;36 & $[ 5, 3, 1, 1, 0, 0 ]$ & $ [2,2,0,1,0,0]$ \\
\;\;\;5 & $[ 1, 1, 1, 1, 1, 0 ]$ & $ [0,0,0,0,1,0]$ & \;\;37 & $[ 5, 3, 1, 1, 1, 0 ]$ & $ [2,2,0,0,1,0]$ \\
\;\;\;6 & $[ 1, 1, 1, 1, 1, 1 ]$ & $ [0,0,0,0,0,2]$ & \;\;38 & $[ 5, 3, 1, 1, 1, 1 ]$ & $ [2,2,0,0,0,2]$ \\
\;\;\;8 & $[ 2, 2, 0, 0, 0, 0 ]$ & $ [0,2,0,0,0,0]$ & \;\;40 & $[ 4, 4, 2, 2, 0, 0 ]$ & $ [0,2,0,2,0,0]$ \\
\;\;\;9 & $[ 2, 2, 1, 0, 0, 0 ]$ & $ [0,1,1,0,0,0]$ & \;\;41 & $[ 4, 4, 2, 2, 1, 0 ]$ & $ [0,2,0,1,1,0]$ \\
\;\;$10'$ & $[ 2, 2, 1, 1, 0, 0 ]$ & $ [0,1,0,1,0,0]$ & \;\;43 & $[ 5, 3, 2, 2, 1, 0 ]$ & $ [2,1,0,1,1,0]$ \\
\;\;$10''$ & $[ 3, 1, 0, 0, 0, 0 ]$ & $ [2,1,0,0,0,0]$ & \;\;45 & $[ 5, 3, 3, 1, 1, 0 ]$ & $ [2,0,2,0,1,0]$ \\
\;\;$11'$ & $[ 2, 2, 1, 1, 1, 0 ]$ & $ [0,1,0,0,1,0]$ & \;\;46 & $[ 5, 3, 3, 1, 1, 1 ]$ & $ [2,0,2,0,0,2]$ \\
\;\;$11''$ & $[ 3, 1, 1, 0, 0, 0 ]$ & $ [2,0,1,0,0,0]$ & \;\;70 & $[ 5, 5, 3, 3, 1, 1 ]$ & $ [0,2,0,2,0,2]$ \\
\;\;12 & $[ 3, 1, 1, 1, 0, 0 ]$ & $ [2,0,0,1,0,0]$ & \;\;84 & $[ 7, 5, 3, 1, 0, 0 ]$ & $ [2,2,2,1,0,0]$ \\
\;\;13 & $[ 3, 1, 1, 1, 1, 0 ]$ & $ [2,0,0,0,1,0]$ & \;\;85 & $[ 7, 5, 3, 1, 1, 0 ]$ & $ [2,2,2,0,1,0]$ \\
\;\;14 & $[ 3, 1, 1, 1, 1, 1 ]$ & $ [2,0,0,0,0,2]$ & \;\;86 & $[ 7, 5, 3, 1, 1, 1 ]$ & $ [2,2,2,0,0,2]$ \\
\;\;16 & $[ 2, 2, 2, 2, 0, 0 ]$ & $ [0,0,0,2,0,0]$ & \;\;94 & $[ 7, 5, 3, 3, 1, 1 ]$ & $ [2,2,0,2,0,2]$ \\
\;\;18 & $[ 3, 2, 2, 1, 0, 0 ]$ & $ [1,0,1,1,0,0]$ & \;165 & $[ 9, 7, 5, 3, 1, 0 ]$ & $ [2,2,2,2,1,0]$ \\
\;\;19 & $[ 3, 2, 2, 1, 1, 0 ]$ & $ [1,0,1,0,1,0]$ & \;166 & $[ 9, 7, 5, 3, 1, 1 ]$ & $ [2,2,2,2,0,2]$ \\
\;\;20 & $[ 3, 3, 1, 1, 0, 0 ]$ & $ [0,2,0,1,0,0]$ & \;286 & $[ 11,  9,  7,  5,  3,  1 ]$ & $ [2,2,2,2,2,2]$ \\
\;\;21 & $[ 3, 3, 1, 1, 1, 0 ]$ & $ [0,2,0,0,1,0]$ & & & \\
    \end{tabular}
\end{center}

\newpage

\begin{multicols}{2}

\subsection*{Subalgebras of type $A_1$ in $D_2$}

\begin{center}
    \begin{tabular}{l|c|c}
        Index & Defining Vector & Diagram  \\ \hline
        \;\;\;$1'$ & $[1,-1]$ & $[2,0]$ \\
        \;\;\;$1''$ & $[1,1]$ & $[0,2]$ \\
        \;\;\;2 & $[2,0]$ & $[2,2]$
    \end{tabular}
\end{center}

\vspace{1cm}

\subsection*{Subalgebras of type $A_1$ in $D_3$}

\begin{center}
    \begin{tabular}{c|c|c}
Index & Defining Vector & Diagram  \\ \hline
1 & $[ 1, 1, 0 ]$ & $ [0,1,1]$ \\
2 & $[ 2, 0, 0 ]$ & $ [2,0,0]$ \\
4 & $[ 2, 2, 0 ]$ & $ [0,2,2]$ \\
10 & $[ 4, 2, 0 ]$ & $ [2,2,2]$ \\
    \end{tabular}
\end{center}

\vspace{1cm}

\subsection*{Subalgebras of type $A_1$ in $D_4$}

\begin{center}
    \begin{tabular}{l|l|c}
Index & Defining Vector & Diagram  \\ \hline
\;\;\;1 & \;\;\;\;\,$[ 1, 1, 0, 0 ]$ & $ [0,1,0,0]$ \\
\;\;\;$2'$ & \;\;\;\;\,$[  1,  1,  1, -1 ]$ & $ [0,0,2,0]$ \\
\;\;\;$2''$ & \;\;\;\;\,$[ 1, 1, 1, 1 ]$ & $ [0,0,0,2]$ \\
\;\;\;$2'''$ & \;\;\;\;\,$[ 2, 0, 0, 0 ]$ & $ [2,0,0,0]$ \\
\;\;\;3 & \;\;\;\;\,$[ 2, 1, 1, 0 ]$ & $ [1,0,1,1]$ \\
\;\;\;4 & \;\;\;\;\,$[ 2, 2, 0, 0 ]$ & $ [0,2,0,0]$ \\
\;\;$10'$ & \;\;\;\;\,$[  3,  3,  1, -1 ]$ & $ [0,2,2,0]$ \\
\;\;$10''$ & \;\;\;\;\,$[ 3, 3, 1, 1 ]$ & $ [0,2,0,2]$ \\
\;\;$10'''$ & \;\;\;\;\,$[ 4, 2, 0, 0 ]$ & $ [2,2,0,0]$ \\
\;\;12 & \;\;\;\;\,$[ 4, 2, 2, 0 ]$ & $ [2,0,2,2]$ \\
\;\;28 & \;\;\;\;\,$[ 6, 4, 2, 0 ]$ & $ [2,2,2,2]$ \\
    \end{tabular}
\end{center}

\columnbreak

\subsection*{Subalgebras of type $A_1$ in $D_5$}

\begin{center}
    \begin{tabular}{l|c|c}
Index & Defining Vector & Diagram  \\ \hline
\;\;\;1 & $[ 1, 1, 0, 0, 0 ]$ & $ [0,1,0,0,0]$ \\
\;\;\;$2'$ & $[ 1, 1, 1, 1, 0 ]$ & $ [0,0,0,1,1]$ \\
\;\;\;$2''$ & $[ 2, 0, 0, 0, 0 ]$ & $ [2,0,0,0,0]$ \\
\;\;\;3 & $[ 2, 1, 1, 0, 0 ]$ & $ [1,0,1,0,0]$ \\
\;\;\;4 & $[ 2, 2, 0, 0, 0 ]$ & $ [0,2,0,0,0]$ \\
\;\;\;5 & $[ 2, 2, 1, 1, 0 ]$ & $ [0,1,0,1,1]$ \\
\;\;\;6 & $[ 2, 2, 2, 0, 0 ]$ & $ [0,0,2,0,0]$ \\
\;\;$10'$ & $[ 3, 3, 1, 1, 0 ]$ & $ [0,2,0,1,1]$ \\
\;\;$10''$ & $[ 4, 2, 0, 0, 0 ]$ & $ [2,2,0,0,0]$ \\
\;\;11 & $[ 4, 2, 1, 1, 0 ]$ & $ [2,1,0,1,1]$ \\
\;\;12 & $[ 4, 2, 2, 0, 0 ]$ & $ [2,0,2,0,0]$ \\
\;\;20 & $[ 4, 4, 2, 2, 0 ]$ & $ [0,2,0,2,2]$ \\
\;\;28 & $[ 6, 4, 2, 0, 0 ]$ & $ [2,2,2,0,0]$ \\
\;\;30 & $[ 6, 4, 2, 2, 0 ]$ & $ [2,2,0,2,2]$ \\
\;\;60 & $[ 8, 6, 4, 2, 0 ]$ & $ [2,2,2,2,2]$ \\
    \end{tabular}
\end{center}

\end{multicols}

\newpage

\subsection*{Subalgebras of type $A_1$ in $D_6$}

\begin{center}
    \begin{tabular}{l|l|c||l|l|c}
        Index & Defining Vector & Diagram & Index & Defining Vector & Diagram \\ \hline
         \;\;\;1& \;$[1,1,0,0,0,0]$ & $[0,1,0,0,0,0]$ & \;\;\,$11'''$& \;$[4,2,1,1,0,0]$ & $[2,1,0,1,0,0]$ \\
         \;\;\;$2'$& \;$[1,1,1,1,0,0]$ & $[0,0,0,1,0,0]$ & \;\;\,$12'$& \;$[3,3,2,1,1,0]$ & $[0,1,1,0,1,1]$ \\
         \;\;\;$2''$& \;$[2,0,0,0,0,0]$ & $[2,0,0,0,0,0]$ &  \;\;\,$12''$& \;$[4,2,2,0,0,0]$ & $[2,0,2,0,0,0]$       \\
         \;\;\;$3'$& \;$[1,1,1,1,1,-1]$ & $[0,0,0,0,2,0]$ &  \;\;\,13& \;$[4,2,2,1,1,0]$ & $[2,0,1,0,1,1]$ \\
         \;\;\;$3''$& \;$[1,1,1,1,1,1]$ & $[0,0,0,0,0,2]$ &  \;\;\,14& \;$[4,2,2,2,0,0]$ & $[2,0,0,2,0,0]$ \\
         \;\;\;$3'''$& \;$[2,1,1,0,0,0]$ & $[1,0,1,0,0,0]$ &  \;\;\,20& \;$[4,4,2,2,0,0]$ & $[0,2,0,2,0,0]$ \\
          \;\;\;$4'$& \;$[2,1,1,1,1,0]$ & $[1,0,0,0,1,1]$ &  \;\;\,28& \;$[6,4,2,0,0,0]$ & $[2,2,2,0,0,0]$ \\
         \;\;\;$4''$& \;$[2,2,0,0,0,0]$ & $[0,2,0,0,0,0]$ &  \;\;\,29& \;$[6,4,2,1,1,0]$ & $[2,2,1,0,1,1]$ \\
         \;\;\;5& \;$[2,2,1,1,0,0]$ & $[0,1,0,1,0,0]$ & \;\;\,30& \;$[6,4,2,2,0,0]$ & $[2,2,0,2,0,0]$ \\
         \;\;\;6& \;$[2,2,2,0,0,0]$ & $[0,0,2,0,0,0]$ &  \;\;\,$35'$& \;$[5,5,3,3,1,-1]$ & $[0,2,0,2,2,0]$ \\
         \;\;\;8& \;$[2,2,2,2,0,0]$ & $[0,0,0,2,0,0]$ &   \;\;\,$35''$& \;$[5,5,3,3,1,1]$ & $[0,2,0,2,0,2]$ \\
         \;\;$10'$& \;$[3,3,1,1,0,0]$ & $[0,2,0,1,0,0]$ & \;\;\,38& \;$[6,4,4,2,2,0]$ & $[2,0,2,0,2,2]$ \\
         \;\;$10''$& \;$[4,2,0,0,0,0]$ & $[2,2,0,0,0,0]$ & \;\;\,60& \;$[8,6,4,2,0,0]$ & $[2,2,2,2,0,0]$ \\
          \;\;$11'$& \;$[3,3,1,1,1,-1]$ & $[0,2,0,0,2,0]$ &   \;\;\,62& \;$[8,6,4,2,2,0]$ & $[2,2,2,0,2,2]$ \\
          \;\;$11''$& \;$[3,3,1,1,1,1]$ & $[0,2,0,0,0,2]$ &  \;\,110& \;$[10,8,6,4,2,0]$ & $[2,2,2,2,2,2]$ \\
    \end{tabular}
\end{center}

\newpage

\section*{Appendix B: Subalgebras of ranks 2 and 3}

In this appendix, we describe all of the conjugacy classes of subalgebras of ranks 2 and 3 in the complex simple Lie algebras of rank up to 6.\footnote{Much of this information is also available in \cite{S-subalgebras}, which was used as a reference when compiling these tables. However, while \cite{S-subalgebras} is concerned with $O(n,\C)$ conjugacy classes, we are concerned with $SO(n,\C)$ conjugacy classes. For example, the subalgebras $B_2^{1'}$ and $B_2^{1''}$ of $D_4$ appear in two non-conjugate dual pairs of $SO(8,\C)$ (see Table \ref{table:dps-in-Dn}) but they are not listed as distinct conjugacy classes in \cite{S-subalgebras}.} 

Note that there are some examples where the same conjugacy class can be specified by multiple non-conjugate maps. For example, $A_2^1$ in $A_4$ can be obtained either as the image of $V_3 \oplus V_1^{\oplus 2}$ with Cartan embedding 
$$[x,y,-x-y] \mapsto [x,y,-x-y,0,0]$$ 
or as the image of $V_3^* \oplus V_1^{\oplus 2}$ with Cartan embedding 
$$[x,y,-x-y] \mapsto [-x,-y,x+y,0,0].$$ 
In these situations, we will include the conjugacy class twice and will include both maps in the table.

For the highest weights and Cartan subalgebras in this appendix, we will use the realizations from \cite[Plates I-IX]{bourbaki}.

\subsection*{Subalgebras of type $A_2$}

We denote the irreducible representations of $A_2$ of dimension at most 13 as follows:

\begin{center}
\begin{table}[H]

\end{center}

\vspace{-.5cm}

\begin{table}[H]
 \caption{Conjugacy classes of subalgebras of type $C_3$ in various $\mf{g}$.} \label{table:C3-subalgebras}
\end{table}

\end{document}